\providecommand{\href}[2]{#2}
\providecommand*{\backref}{}
\providecommand*{\backrefalt}{}
\renewcommand*{\backref}[1]{}
\renewcommand*{\backrefalt}[4]{%
	\ifcase #1 %
	\or
	  Cited page~#2.
	\else
	  Cited pages~#2.
	\fi
}
\newcommand{\ic}{\mathbf{i}}
\newcommand{\boA}{\mathcal{A}}
\newcommand{\boC}{\mathcal{C}}
\newcommand{\boH}{\mathcal{H}}
\newcommand{\boL}{\mathcal{L}}
\newcommand{\HH}{\mathbb{H}}
\newcommand{\hol}{\beta}
\newcommand{\N}{\mathbb{N}}
\newcommand{\Z}{\mathbb{Z}}
\newcommand{\R}{\mathbb{R}}
\newcommand{\Sbb}{\mathbb{S}}
\newcommand{\Pbb}{\mathbb{P}}
\newcommand{\dd}{\mathop{}\!\mathrm{d}}
\newcommand{\norm}[1]{\left\| #1 \right\|}
\newcommand{\st}{\::\:}
\newcommand{\SL} {{\mathrm{PSL}(2,\R)}}
\newcommand{\bartheta}{\overline{\theta}}
\newcommand{\bSigma}{\overline{\Sigma}}
\newcommand{\abs}[1]{\left|#1\right|}
\newcommand{\lgth}{\abs}
\renewcommand{\epsilon}{\varepsilon}
\renewcommand{\phi}{\varphi}
\renewcommand{\leq}{\leqslant}
\renewcommand{\geq}{\geqslant}
\DeclareMathOperator{\Card}{Card}
\DeclareMathOperator{\Leb}{Leb}
\DeclareMathOperator{\Press}{Pr}
\DeclareMathOperator{\dLeb}{dLeb}
\newcommand{\coloneqq}{\mathrel{\mathop:}=}
\newtheorem{thm}{Theorem}[section]
\newtheorem{prop}[thm]{Proposition}
\newtheorem{definition}[thm]{Definition}
\newtheorem{lem}[thm]{Lemma}
\newtheorem{cor}[thm]{Corollary}
\newtheorem*{prop*}{Proposition}
\theoremstyle{definition}
\newtheorem{rmk}[thm]{Remark}
\numberwithin{equation}{section}
\title[Local limit theorem in hyperbolic groups]
{Local limit theorem for symmetric random walks in Gromov-hyperbolic groups}
\author{S\'ebastien Gou\"ezel}
\address{IRMAR, CNRS UMR 6625,
Universit\'e de Rennes 1, 35042 Rennes, France}
\email{sebastien.gouezel@univ-rennes1.fr}
\date{\today}
\keywords{Local limit theorem, random walk, hyperbolic group, spectral radius,
Martin boundary, transfer operator}
\subjclass[2010]{31C35, 60J50, 20F67}
\begin{document}

\begin{abstract}
Completing a strategy of Gou\"ezel and Lalley \cite{gouezel_lalley}, we
prove a local limit theorem for the random walk generated by any
symmetric finitely supported probability measure on a non-elementary
Gromov-hyperbolic group: denoting by $R$ the inverse of the spectral
radius of the random walk, the probability to return to the identity
at time $n$ behaves like $C R^{-n}n^{-3/2}$. An important step in the
proof is to extend Ancona's results on the Martin boundary up to the
spectral radius: we show that the Martin boundary for $R$-harmonic
functions coincides with the geometric boundary of the group. In an
appendix, we explain how the symmetry assumption of the measure can
be dispensed with for surface groups.
\end{abstract}

\maketitle

\section{Introduction}

Consider a countable group $\Gamma$ (with identity denoted by $e$),
together with a probability measure $\mu$ whose support generates
$\Gamma$ as a semigroup (we say that $\mu$ is admissible).
Multiplying random elements of $\Gamma$ distributed independently
according to $\mu$, one obtains a random walk on $\Gamma$. The local
limit problem consists in determining good asymptotics for the
transition probabilities $p_n(x,y)$ of this random walk. Let us
assume for simplicity that $\mu$ is finitely supported. For
$\Gamma=\Z^d$, simple Fourier computations show that $p_n(e,e)\sim C
n^{-d/2}$ if the walk is centered, and $p_n(e,e)\sim C R^{-n}
n^{-d/2}$ for some $R>1$ if the walk is not centered. Similar
asymptotics hold in nilpotent groups by the deep results of
Varopoulos and Alexopoulos \cite{alexopoulos}.

When the group is not amenable, $p_n(e,e)$ decays exponentially fast.
The situation is well understood for semisimple Lie groups and
absolutely continuous measures since the work of Bougerol
\cite{bougerol}: the probability to return to a fixed neighborhood of
the identity behaves like $C R^{-n} n^{-a}$ for some $R>1$ (depending
on the measure one considers) and some $a>1$ only depending on the
geometry of the group. In the simplest case of rank one groups,
$a=3/2$. It is reasonable to conjecture that similar asymptotics
(with the same $a$) hold for random walks on cocompact lattices of
such semisimple Lie groups, but the proofs of Bougerol (based on
representation theory) do not adapt well, and this question is
essentially open.

A notable exception is the case of free groups: in this situation,
the generating function of the transition probabilities (also called
the Green function) $G_r(x,y) = \sum r^n p_n(x,y)$ is an algebraic
function of $r$. A careful study of its first singularity then yields
the asymptotics of $p_n(x,y)$. For free groups, this is due to Lalley
\cite{lalley_freegroup}, and the asymptotics is of the form $p_n(x,y)
\sim C(x,y) R^{-n} n^{-3/2}$, in accordance with the results of
Bougerol in rank one Lie groups. Most free products can also be
treated similarly, see \cite[Chapter III]{woess} and references
therein.

Recently, together with Lalley, we were able to treat in
\cite{gouezel_lalley} some non-amenable groups where the Green
function is not expected to be algebraic. We proved that, for a
cocompact lattice of $\SL$, and for a finitely supported symmetric
measure $\mu$, the above asymptotics $p_n(x,y) \sim C(x,y)
R^{-n}n^{-3/2}$ still holds. Henceforth, we will refer to this
situation as the $\SL$-case. The overall strategy can in fact be
formulated in any Gromov-hyperbolic group (including in particular
all cocompact lattices in rank one semisimple Lie groups), but a
crucial point in the proof really relies on two-dimensional geometry.
In this article, we provide a completely different argument for this
crucial point, making it possible to extend the results of
\cite{gouezel_lalley} to any Gromov-hyperbolic group.

We say that the walk is \emph{aperiodic} if there exists an odd
integer $n$ such that $p_n(e,e)>0$. In this case, $p_n(e,e)>0$ for
all large enough $n$.

\begin{thm}
\label{main_thm}
Let $\Gamma$ be a finitely generated non-elementary Gromov-hyperbolic
group. Let $\mu$ be an admissible finitely supported symmetric
probability measure on $\Gamma$. Denote by $R>1$ the inverse of the
spectral radius of the corresponding random walk. For any $x, y\in
\Gamma$, there exists $C(x,y)>0$ such that
  \begin{equation*}
  p_n(x,y) \sim C(x,y)R^{-n} n^{-3/2}
  \end{equation*}
if the walk is aperiodic. If the walk is periodic, this asymptotics
holds for even (resp.\ odd) $n$ if the distance from $x$ to $y$ is
even (resp.\ odd).
\end{thm}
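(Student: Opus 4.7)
The plan is to follow the strategy of Gou\"ezel--Lalley \cite{gouezel_lalley}: reduce the local limit theorem to a precise understanding of the singularity of the Green function $G_r(x,y) = \sum_n r^n p_n(x,y)$ at its radius of convergence $r=R$. Concretely, the goal is to show that
\begin{equation*}
G_r(x,y) = A(x,y) - B(x,y)\sqrt{R-r} + o\bigl(\sqrt{R-r}\bigr)
\end{equation*}
as $r\to R^-$, with $B(x,y)>0$, and that $G_r$ extends holomorphically across $R$ outside this branch point. A classical Darboux/transfer argument (Cauchy integral on a contour that skirts $r=R$ at distance of order $1/n$) then turns this square-root singularity into the expected asymptotics $p_n(x,y)\sim C(x,y) R^{-n}n^{-3/2}$ in the aperiodic case. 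Periodicity is handled at the end by passing to the walk indexed by steps of size two, which is aperiodic on each parity class.

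First, I would establish \emph{Ancona's inequalities uniformly up to the spectral radius}: for every $r\in(1,R]$ and every $y$ on a quasi-geodesic from $x$ to $z$, one has $G_r(x,z)\leq C\, G_r(x,y)\, G_r(y,z)$ with $C$ depending only on $\mu$ and on the hyperbolicity constant $\delta$ of $\Gamma$, but not on $r$. This is the central new ingredient replacing the two-dimensional geometric argument used for the $\SL$-case in \cite{gouezel_lalley}. From such uniform inequalities one deduces, by the standard Ancona machinery, that the $R$-Martin boundary coincides with the Gromov boundary $\partial\Gamma$, and that the $R$-Martin kernel $K_R(x,\xi) = \lim_{y\to\xi} G_R(x,y)/G_R(e,y)$ is H\"older continuous in $\xi$. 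Finiteness of $G_R$ at the spectral radius itself, which is needed for this step to make sense, is ensured by the symmetry of $\mu$ (the walk is $R$-recurrent or $R$-positive in the symmetric case; the dichotomy is handled via the Ancona estimates themselves).

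Given the uniform Ancona inequalities, the rest of the argument runs as in \cite{gouezel_lalley}: one codes trajectories of the random walk converging to the boundary by a subshift of finite type built from a Markov automatic structure on $\Gamma$, and expresses the quantity $\partial_r G_r(e,e)$ in terms of a family of transfer operators $\boL_r$ acting on H\"older functions on $\partial\Gamma$. The uniform H\"older control of $K_r$ provides a spectral gap for $\boL_r$ that is uniform in $r\in[1,R]$, and analytic perturbation theory together with the pressure formula identifying $R$ with the radius at which the dominant eigenvalue of $\boL_r$ equals $1$ yields the square-root behavior displayed above. The positivity $B(x,y)>0$ follows from non-degeneracy of the asymptotic variance of the logarithm of the Martin kernel along random trajectories, a consequence of non-elementarity of $\Gamma$.

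The decisive obstacle is clearly the first step: pushing Ancona's inequalities all the way to $r=R$. The classical proof for $r<R$ uses that $G_r(e,e)<\infty$ with quantitative bounds that degenerate as $r\uparrow R$, and the $\SL$-argument of \cite{gouezel_lalley} exploited Fuchsian two-dimensional geometry in an essential way. To replace it I would work purely at the group-theoretic level, combining the thin-triangle geometry of $\Gamma$ with an inductive separation argument on geodesic segments of doubling length: if the multiplicative constant at scale $L$ were to blow up, so would one at scale $L/2$, giving a self-improving estimate whose only consistent solution is a uniform bound. Making this inductive scheme robust against the critical regime $r=R$, where the walk is only barely transient, is the technical heart of the paper.
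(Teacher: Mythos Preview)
Your overall three-step architecture (uniform Ancona $\Rightarrow$ thermodynamic formalism $\Rightarrow$ Tauberian) matches the paper, but two of the steps contain genuine gaps as you describe them.

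\textbf{The Tauberian step.} You propose to extract $p_n\sim C R^{-n}n^{-3/2}$ by a Darboux/contour argument, which requires $G_r$ to extend holomorphically to a neighborhood of the closed disk $\{|z|\leq R\}$ minus a slit at $R$. Nothing in the thermodynamic analysis gives this: the paper only proves the real-variable asymptotic $\partial_r G_r(x,y)\sim C(x,y)(R-r)^{-1/2}$ as $r\uparrow R$ (Theorem~\ref{thm_strong_asymp}), equivalently $G_r=A-B\sqrt{R-r}+o(\sqrt{R-r})$ along the real axis. The map $r\mapsto\phi_r$ is only shown to be H\"older continuous in $r$ (Lemma~\ref{lem_perturb_holder}), not analytic, so one cannot push the transfer-operator perturbation theory into the complex plane. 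The paper's route is instead purely Tauberian: Karamata gives $\sum_{k\leq n} kR^k p_k(x,y)\sim C'n^{1/2}$, and then symmetry is used a second time---the self-adjoint Markov operator on $\ell^2$ has spectral decomposition making $R^n p_n(x,y)$ almost monotone (decreasing up to an exponentially small error), which upgrades the Ces\`aro asymptotic to a pointwise one. You misplace the role of symmetry: finiteness of $G_R$ is Guivarc'h's result for non-amenable groups and needs no symmetry; symmetry enters in Lemma~\ref{lem_avoidballs} (via $G_R(e,u)^2=H_R(e,u)$) and, crucially, in this monotonicity step.

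\textbf{The Ancona step.} Your ``self-improving induction on dyadic scales'' is essentially the classical mechanism, and it is exactly what breaks at $r=R$ without further input. The paper's new idea is different and concrete: first a supermultiplicativity argument (Lemmas~\ref{lem_rallonge}--\ref{lem_GRbounded}) shows $\sum_{x\in\Sbb_k}H_R(e,x)\leq C$ uniformly in $k$; then one embeds $\Gamma$ roughly similarly into $\HH^m$ via Bonk--Schramm and constructs $N\sim e^{\epsilon n}$ \emph{random} angular barriers separating $x$ from $z$ outside $B(y,n)$. A first-moment estimate using the sphere bound shows one can choose the barriers so that each successive transition has $\ell^2$-norm at most $1/2$, yielding $G_R(x,z;B(y,n)^c)\leq 2^{-e^{\epsilon n}}$ (Lemma~\ref{lem_avoidballs}). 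Only then does the dyadic induction (Lemma~\ref{lem_abstract_ancona}) close. Finally, your sketch assumes a single transfer operator with uniform spectral gap; in a general hyperbolic group the Cannon automaton has several maximal components, and a substantial part of Section~\ref{sec_diff_eq} (Theorem~\ref{thm_Pressi_indep}, via a Calegari--Fujiwara ergodicity argument on $\partial\Gamma$) is devoted to showing their pressures agree to first order, without which the asymptotic of $\eta(r)$ cannot be extracted.
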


The proof of the analogous theorem in the $\SL$-case in
\cite{gouezel_lalley} is divided in three steps, as follows:
\begin{enumerate}
\item One shows that Ancona's results \cite{ancona} on the Martin
    boundary extend up to $r=R$. In particular, the Martin kernel
    $K_{r,\xi}(x)=G_r(x,\xi)/G_r(e,\xi)$ converges when $\xi$
    tends to a point in the geometric boundary of $\Gamma$,
    uniformly in $r\in [1,R]$.
\item Using the Cannon automaton coding geodesics in the group,
    and thermodynamic formalism in the resulting subshift of
    finite type, one gets estimates for the sums $\sum_{x\in
    \Gamma} G_r(e,x)G_r(x,e)$ when $r\to R$ in terms of a
    pressure function. This implies that $r\mapsto G_r(e,e)$
    almost satisfies a differential equation. Asymptotics of this
    function follow.
\item From the asymptotics of $G_r(e,e)$, one deduces the
    asymptotics of $p_n(e,e)$ using tauberian theorems (and a
    little bit of spectral theory). The asymptotics of $p_n(x,y)$
    are proved in the same way.
\end{enumerate}

From this point on, this article is subdivided into three sections,
each devoted to one of those three steps. We will give further
comments, explain quickly the arguments in \cite{gouezel_lalley}, and
insist on the differences between the $\SL$-case and the general case
of Gromov-hyperbolic groups. The main difference is in the first
step: the proof of \cite{gouezel_lalley} is deeply $2$-dimensional,
and the general argument is completely different. For the second
step, a significant technical complication appears: in the
$\SL$-case, the Cannon automaton (a combinatorial object coding the
geodesics in the group) is transitive, while this is not the case in
general. To overcome this difficulty, we use additional information
from the first step, and a technique of Calegari and Fujiwara
\cite{calegari_fujiwara}. Finally, the third step is exactly the same
in the $\SL$-case or in the general case, we will only give some
details for the convenience of the reader.

While we have tried to make this article as self contained as
possible, \cite{gouezel_lalley} provides a good introduction to some
concepts and techniques that we use. The letter $C$ denotes a
constant that may vary from line to line. Since most arguments work
exactly in the same way for symmetric or nonsymmetric measures, we
have written most proofs without using the assumption of symmetry. It
only plays a role in the proof of Lemma~\ref{lem_avoidballs} (the
central lemma to obtain Ancona inequalities) and in
Section~\ref{sec_tauberian}. We expect that the first step (Ancona
inequalities) should be true without any symmetry assumption on the
measure. While we are not able to prove it in general, we are able to
obtain it for cocompact discrete subgroups of $\SL$. The
identification of the Martin boundary at the spectral radius follows.
The argument is given in Appendix~\ref{sec_appendix}.

\section{Ancona inequalities up to the spectral radius}

\subsection{The Green function}
\label{subsec_green}

Consider an admissible finitely supported probability measure $\mu$
on a countable group $\Gamma$. It defines a random walk on $\Gamma$.
Let $R=R(\mu) = \limsup p_n(e,e)^{-1/n}$ (when $\mu$ is symmetric,
this is the inverse of the spectral radius of the Markov operator
associated to the random walk on $\ell^2$). The Green function is
defined for $1\leq r< R$ and $x,y\in \Gamma$ by $G_r(x,y)=\sum r^n
p_n(x,y)$. By a result of Guivarc'h, it is convergent even for $r=R$
if the group carries no recurrent random walk (this is in particular
true for non-amenable groups). One should think of $G_r(x,y)$ as the
average number of passages in $y$ if the random walk starts from $x$,
but for the measure $r\mu$ instead of $\mu$. In particular, for
larger $r$, $G_r$ gives more weight to longer paths.

If $\gamma=(x, x_1,\dotsc, x_{n-1}, y)$ is a path of length $n$ from
$x$ to $y$, its $r$-weight $w_r(\gamma)$ is $r^n \prod_{i=0}^{n-1}
p(x_i, x_{i+1})$ (where $x_0=x$ and $x_n=y$ by convention, and we
write $p(a,b)=\mu(a^{-1}b)$ for the probability to jump from $a$ to
$b$). By definition, $G_r(x,y)=\sum w_r(\gamma)$, where the sum is
over all paths from $x$ to $y$.

If $\Omega$ is a subset of $\Gamma$, one defines the restricted Green
function $G_r(x,y;\Omega)$ as $\sum w_r(\gamma)$ where the sum is
over all paths $\gamma=(x,x_1,\dotsc, x_{n-1},y)$ such that $x_i\in
\Omega$ for $1\leq i\leq n-1$. If $A$ is a subset of $\Gamma$ such
that any trajectory of the random walk from $x$ to $y$ has to go
through $A$, one has
  \begin{equation}
  \label{eq_Grcompose}
  G_r(x,y) = \sum_{a\in A} G_r(x,a; A^c) G_r(a,y) = \sum_{a\in A} G_r(x,a) G_r(a, y; A^c),
  \end{equation}
where $A^c$ denotes the complement of $A$. Indeed, the first (resp.\
second) formula is proved by splitting a path from $x$ to $y$
according to its first (resp.\ last) visit to $A$. More generally, if
$\Omega$ is a subset of $\Gamma$ containing $x$ and $y$, the above
formula holds restricted to $\Omega$, i.e.,
  \begin{equation*}
  G_r(x,y; \Omega) = \sum_{a\in A\cap \Omega} G_r(x,a; A^c\cap \Omega) G_r(a,y; \Omega)
  = \sum_{a\in A\cap \Omega} G_r(x,a;\Omega) G_r(a, y; A^c\cap \Omega).
  \end{equation*}

Assuming that $\Gamma$ is finitely generated, we can consider a word
distance $d$ on $\Gamma$ coming from a finite symmetric generating
set. If $x$ and $y$ are at distance $d$, there is a path from $x$ to
$y$ with probability bounded from below by $C^{-d}$, and staying
close to a geodesic segment from $x$ to $y$. We deduce that, for any
$z$,
  \begin{equation}
  \label{eq_harnack}
  C^{-d(x,y)} \leq G_r(x,z)/G_r(y,z) \leq C^{d(x,y)},
  \end{equation}
and similar inequalities hold for the Green functions restricted to
any set containing a fixed size neighborhood of a geodesic segment
from $x$ to $y$. These inequalities are called Harnack inequalities.

The first visit Green function is $F_r(x,y)=G_r(x,y; \{y\}^c)$. It
only takes into account the first visits to $y$. For $r=1$, this is
the probability to reach $y$ starting from $x$. One has
$G_r(x,y)=F_r(x,y) G_r(y,y)=F_r(x,y) G_r(e,e)$, by the
formula~\eqref{eq_Grcompose} for $A=\{y\}$. Moreover, $F_r(x,y)
G_r(y,z) \leq G_r(x,z)$ (since the concatenation of a path from $x$
to $y$ with a path from $y$ to $z$ gives a path from $x$ to $z$).
Dividing by $G_r(e,e)$, one gets
  \begin{equation}
  \label{eq_Fr_subadd}
  F_r(x,y)F_r(y,z) \leq F_r(x,z).
  \end{equation}
We also obtain
  \begin{equation}
  \label{eq_trivial_ancona}
  G_r(x,y)G_r(y,z) \leq G_R(e,e) G_r(x,z).
  \end{equation}

The Martin boundary is the set of pointwise limits of sequences of
functions $K_{r,y_n}(x)=G_r(x,y_n)/G_r(e,y_n)$ when $y_n$ tends to
infinity. Since these functions are normalized by $K_{r,y}(e)=1$, and
$r$-harmonic except at $y$, limits exist, are nonzero, and
$r$-harmonic everywhere (since the measure $\mu$ has finite support).
Understanding the Martin boundary amounts to understanding for which
sequences $y_n$ the functions $K_{r,y_n}$ converge.

The derivative of $G_r$ with respect to $r$ can be computed. Indeed,
  \begin{equation*}
  \label{eq_derive}
  (r G_r(x,y))'=\sum_{z\in \Gamma} G_r(x,z)G_r(z,y),
  \end{equation*}
where the prime indicates the derivative with respect to $r$. This
equation for $x=y=e$ shows that
  \begin{equation}
  \label{eq_eta}
  \eta(r) \coloneqq \sum_z G_r(e,z) G_r(z,e) < +\infty \quad \text{for all }r<R.
  \end{equation}

Let us introduce a convenient notation: we shall write
  \begin{equation}
  \label{eq_defHr}
  H_r(x,y)=G_r(x,y)G_r(y,x).
  \end{equation}
In the symmetric case, this is simply the square of the Green
function. With this notation, $\eta(r) = \sum_{z\in \Gamma}
H_r(e,z)$. Since $G_r$ satisfies~\eqref{eq_trivial_ancona}, $H_r$
also satisfies this inequality.

\subsection{Ancona inequalities}

Consider a finitely generated group $\Gamma$. Its Cayley graph is
endowed with the word metric coming from any finite set of
generators. One says that $\Gamma$ is Gromov-hyperbolic (or simply
hyperbolic) if there exists $\delta$ such that any geodesic triangle
in this Cayley graph is $\delta$-thin, i.e., each side of the
triangle is contained in the $\delta$-neighborhood of the union of
the two other sides. This notion is invariant under quasi-isometry,
and therefore independent of the choice of the generators (see
\cite{ghys_hyperbolique} for more details on hyperbolic groups). The
geometric intuition to have is that any finite set of points in an
hyperbolic group is isometric to a finite set of points in a tree, up
to some constant only depending on the number of points. In
particular, statements regarding the relative positions of points can
be reduced to statements in trees, that are easy to check
combinatorially. This intuition is made precise by the following
theorem (\cite[Theorem 2.12]{ghys_hyperbolique}).
\begin{thm}
\label{thm:tree_approx}
For any $n\in \N$ and $\delta>0$, there exists a constant
$C=C(n,\delta)$ with the following property. Consider a subset $A$ of
a $\delta$-hyperbolic space of cardinality at most $n$. There exists
a map $\Phi$ from $F$ to a metric tree such that, for any $x,y\in A$,
  \begin{equation*}
  d(x,y)-C \leq d(\Phi(x), \Phi(y)) \leq d(x,y).
  \end{equation*}
\end{thm}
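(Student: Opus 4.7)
The plan is to proceed by induction on the cardinality $n$ of $A$, exploiting the algebraic reformulation of $\delta$-hyperbolicity in terms of Gromov products. Recall that the four-point condition equivalent to $\delta$-hyperbolicity (after adjusting the constant) reads: for any $x,y,z$ and any basepoint $w$,
\begin{equation*}
(x,y)_w \geq \min\bigl((x,z)_w,\, (y,z)_w\bigr) - \delta,
\end{equation*}
where $(a,b)_c = \tfrac{1}{2}(d(a,c)+d(b,c)-d(a,b))$. In a metric tree this inequality holds with $\delta=0$, and moreover at least two of the three Gromov products coincide. The goal is to realize the finite set $A$ in a tree so that all Gromov products (and hence all distances) are preserved up to an additive error depending only on $n$ and $\delta$.

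The base cases $\lgth A \leq 3$ are immediate: any three points satisfying the triangle inequality embed isometrically into a tripod, giving $C(3,\delta)=0$. For the inductive step, fix a basepoint $w\in A$, pick some $x_* \in A\setminus\{w\}$, and apply the induction hypothesis to $A' = A\setminus\{x_*\}$ to obtain an embedding $\Phi': A'\to T'$ into a finite tree with distortion at most $C(n-1,\delta)$. It remains to glue $x_*$ onto $T'$. To do so, one selects $y^*\in A'$ maximizing $(x_*,y)_w$, attaches a new segment of length $d(x_*,w) - (x_*,y^*)_w$ to the point of $T'$ lying on $[\Phi'(w),\Phi'(y^*)]$ at distance $(x_*,y^*)_w$ from $\Phi'(w)$, and places $\Phi(x_*)$ at the free endpoint of this segment.

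By construction, the attachment recovers the distances $d(x_*,w)$ and $d(x_*,y^*)$ exactly (up to the prior distortion on $A'$). The content is to check that for every other $y\in A'$, the tree distance $d(\Phi(x_*),\Phi'(y))$ matches $d(x_*,y)$ up to $O(\delta)$ plus the accumulated distortion. This is a direct computation: apply the $\delta$-hyperbolicity inequality to the triples $\{x_*,y^*,y\}$ based at $w$, to obtain $(x_*,y)_w \geq \min((x_*,y^*)_w,(y^*,y)_w) - \delta$; by the maximality of $y^*$ the minimum equals $(y^*,y)_w$ unless $(x_*,y)_w$ itself is close to $(x_*,y^*)_w$, and in each case the tree ultrametric identity for $T'$ gives the matching value for the image distances, up to $\delta$ and the distortion on $A'$.

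The hard part — and the reason one gets a constant $C(n,\delta)$ genuinely depending on $n$ — is controlling how errors accumulate across the $n-3$ attachment steps. A naive bookkeeping yields a linear bound $C(n,\delta) \leq C n\delta$, which is sufficient for Theorem~\ref{thm:tree_approx}. The sharper logarithmic bound $C(n,\delta)\lesssim \delta \log_2 n$ of Gromov requires choosing at each step the point $x_*$ in a balanced way (removing a leaf of the approximating tree rather than an arbitrary point), so that the worst-case depth of the induction grows only logarithmically; but this refinement is inessential here, since $n$ is a fixed parameter in all applications. In either case the upper bound $d(\Phi(x),\Phi(y))\leq d(x,y)$ is automatic from the construction, because the attached segment has length exactly $d(x_*,w)-(x_*,y^*)_w$ and Gromov products are non-negative, so no distance can be inflated.
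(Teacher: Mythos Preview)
The paper does not give its own proof of this statement; it is quoted from \cite[Theorem 2.12]{ghys_hyperbolique}. Your inductive attachment argument is the right shape and does produce a tree approximation with \emph{two-sided} additive error $O(n\delta)$, but your last paragraph contains a genuine error: the upper bound $d(\Phi(x),\Phi(y))\leq d(x,y)$ is \emph{not} automatic from the construction, and the sentence ``Gromov products are non-negative, so no distance can be inflated'' is not an argument.

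Already at $n=4$ this fails. Take $A=\{w,x_*,y^*,y\}$ with Gromov products $(x_*,y^*)_w=10$, $(x_*,y)_w=6$, $(y^*,y)_w=5$; one checks this is a valid configuration in a $1$-hyperbolic space. The tripod $T'$ for $\{w,y^*,y\}$ is isometric, and your rule attaches $x_*$ on $[\Phi'(w),\Phi'(y^*)]$ at distance $(x_*,y^*)_w=10$ from $\Phi'(w)$, which lies \emph{beyond} the branch point of $y$ (sitting at distance $(y^*,y)_w=5$). A direct computation gives $d_T(\Phi(x_*),\Phi(y)) = d(x_*,w)+d(w,y)-2\cdot 5$, while $d(x_*,y)=d(x_*,w)+d(w,y)-2\cdot 6$, so the tree distance exceeds the original one by $2=2\delta$. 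In general each attachment step can inflate some distance by $O(\delta)$ plus the prior distortion, so what your induction actually proves is $\abs{d_T-d}\leq C(n,\delta)$, not the one-sided inequality the theorem asserts.

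The proof in \cite{ghys_hyperbolique} avoids this by a global construction rather than inductive gluing. Fixing $w\in A$, one replaces each product $(x,y)_w$ by
\[
(x,y)'_w \;=\; \max_{x=z_0,z_1,\dotsc,z_k=y}\ \min_{0\leq i<k}\ (z_i,z_{i+1})_w,
\]
the maximum taken over all chains in $A$. This modified product satisfies the $0$-hyperbolic inequality $(x,y)'_w\geq\min((x,z)'_w,(y,z)'_w)$ exactly, and $(x,y)'_w\geq (x,y)_w$ trivially (take the chain of length one). The pseudometric $d'(x,y)=d(x,w)+d(y,w)-2(x,y)'_w$ is therefore $0$-hyperbolic, hence embeds isometrically in a tree, and satisfies $d'\leq d$ \emph{on the nose}. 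The lower bound comes from iterating the $\delta$-inequality along chains: applying it dyadically to a chain of length $k$ loses only $\lceil\log_2 k\rceil\cdot\delta$, and since chains in $A$ have length at most $n$ one gets $C(n,\delta)=O(\delta\log n)$.
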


An hyperbolic group $\Gamma$ (or more generally any geodesic
Gromov-hyperbolic space) has a well defined geometric boundary
$\partial \Gamma$: this is the set of semi-infinite geodesics, where
two such geodesics are identified if they stay a bounded distance
away. This boundary is a compact space, and $\Gamma\cup \partial
\Gamma$ is also compact.

Consider now an admissible finitely supported probability measure
$\mu$ on a non-elementary hyperbolic group $\Gamma$ (i.e., not
quasi-isometric to $\{0\}$ or $\Z$). Let $R = R(\mu)$, it is strictly
larger than $1$ since $\Gamma$ is not amenable. Ancona proved in
\cite{ancona} that, for any $r<R$, the Martin boundary for
$r$-harmonic functions coincides with the geometric boundary:
$K_{r,y_n}$ converges pointwise if and only if $y_n$ converges to a
point $\xi\in\partial \Gamma$, and the limits are different for
different points of the boundary.

A crucial inequality in Ancona's proof is the fact that the converse
inequality to~\eqref{eq_trivial_ancona} holds for any $r<R$ whenever
$y$ is close to a geodesic from $x$ to $z$ (with a constant a priori
depending on $r$). In other words, typical trajectories from $x$ to
$z$ follow the geodesic sufficiently well so that they are likely to
pass close to $y$. When $r$ increases, $G_r$ gives more and more
weight to long trajectories, that are more likely to go further from
the geodesic. Hence, this Ancona estimate is more and more subtle
when $r$ increases. Proving such an estimate for $r=R$ is a crucial
step in the proof of Theorem~\ref{main_thm}.

\begin{definition}
A probability measure $\mu$ on a Gromov-hyperbolic group $\Gamma$
satisfies uniform Ancona inequalities if there exists a constant
$C>0$ such that, for any $x,z\in \Gamma$ and for any $y$ close to a
geodesic segment from $x$ to $z$, for any $r\in [1,R(\mu)]$,
  \begin{equation*}
  G_r(x,z) \leq C G_r(x,y)G_r(y,z).
  \end{equation*}
\end{definition}

\begin{thm}
\label{thm_ancona}
If $\mu$ is admissible, finitely supported and symmetric on a
non-elementary Gromov-hyperbolic group, it satisfies uniform Ancona
inequalities.
\end{thm}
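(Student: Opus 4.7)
The strategy follows the classical Ancona pattern: reduce the inequality to a uniform single-scale ``avoidance'' estimate, then iterate. Given $x, z \in \Gamma$ and $y$ close to a geodesic from $x$ to $z$, I would use Gromov hyperbolicity together with the tree approximation of Theorem~\ref{thm:tree_approx} to produce a sequence of separating balls $B_1, B_2, \dots$ of bounded radius $N$, placed along the geodesic from $x$ to $z$, each disconnecting $x$ from $z$. The goal is to find $\theta<1$ and $N$, independent of $r \in [1,R]$ and of the configuration, such that for any such separating ball $B$,
\[
G_r(x,z;B^c) \leq \theta\, G_r(x,z).
\]
Iterating this estimate over the $B_n$'s and applying Harnack's inequality~\eqref{eq_harnack} reduces the Ancona inequality $G_r(x,z) \leq C\, G_r(x,y) G_r(y,z)$ to this single-scale bound.

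For $r$ in any compact subinterval of $[1,R)$ the avoidance estimate is a uniform reformulation of Ancona's original argument~\cite{ancona}: in this regime the resolvent $(I-rP)^{-1}$ is bounded on $\ell^2$ with uniform bound, $G_r(e,e)$ stays uniformly finite, and the standard combinatorial arguments in hyperbolic spaces go through uniformly. The crux, and the genuinely new point, is to push the estimate up to the critical radius $r=R$, where $(I-RP)^{-1}$ becomes singular and the classical estimates degenerate.

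To handle $r=R$ I would exploit the symmetry of $\mu$ via $\ell^2$ spectral theory. Symmetry makes $P$ self-adjoint on $\ell^2(\Gamma)$, and the restricted kernel $G_R(\cdot,\cdot;B^c)$ is positive and self-adjoint; Cauchy--Schwarz then gives
\[
G_R(x,z;B^c)^2 \leq G_R(x,x;B^c)\, G_R(z,z;B^c),
\]
so the task reduces to showing that $G_R(x,x;B^c)$ is strictly smaller than $G_R(x,x)$ by a definite factor for a thick enough separating ball $B$. By hyperbolicity such a $B$ disconnects $\Gamma$ into large opposing pieces, and combining non-amenability with the self-adjointness of $RP$ (so that its operator norm coincides with its spectral radius, equal to $1$) should yield the required uniform strict decrease of the return probability of the walk killed on $B$. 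This provides a contraction factor $\theta<1$ valid up to $r=R$, whence the Ancona inequality follows by iteration and Harnack. I expect this last step, the avoidance estimate at the spectral radius, to be the main obstacle: it is precisely here that symmetry enters irreducibly, both to make the Cauchy--Schwarz reduction available and to convert the spectral bound on $RP$ into a pointwise bound on the Green kernel.
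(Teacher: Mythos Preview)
Your Cauchy--Schwarz reduction at $r=R$ has a fatal gap. The inequality $G_R(x,z;B^c)^2 \leq G_R(x,x;B^c)\, G_R(z,z;B^c)$ is correct, but its right-hand side is at most $G_R(e,e)^2 = O(1)$, whereas you need a bound of the form $\theta^2 G_R(x,z)^2$ with $G_R(x,z)$ typically exponentially small in $d(x,z)$. Showing that $G_R(x,x;B^c)$ is a definite fraction smaller than $G_R(x,x)$ cannot bridge this exponential discrepancy; and in any case that claim is false in the relevant regime. When $x$ is far from $B$, almost every loop based at $x$ avoids $B$ entirely, so $G_R(x,x)-G_R(x,x;B^c)\leq\sum_{b\in B}G_R(x,b)G_R(b,x)$ is small rather than a definite fraction of $G_R(x,x)$. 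What your Cauchy--Schwarz step actually delivers is $G_R(x,z;B^c)\leq G_R(e,e)$, which is vacuous. Self-adjointness and the spectral bound $\norm{RP}=1$ give no leverage here: the killed walk on $B^c$ still has spectral radius arbitrarily close to $1$ once $x$ is far from $B$.

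The paper's argument does use Cauchy--Schwarz and symmetry, but in an essentially different architecture. One first proves, via a supermultiplicativity trick (Lemma~\ref{lem_GRbounded}), that $\sum_{x\in\Sbb_k} G_R(e,x)^2\leq C$ uniformly in $k$; symmetry enters only here, to identify $G_R(e,x)^2$ with $H_R(e,x)=G_R(e,x)G_R(x,e)$. Then (Lemma~\ref{lem_avoidballs}) one places not one ball of bounded radius but $N\sim e^{\epsilon n}$ barriers $A_1,\dots,A_N$, each a conical hypersurface in a Bonk--Schramm embedding of $\Gamma$ into $\HH^m$, separating $x$ from $z$ outside $B(y,n)$. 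Cauchy--Schwarz is applied to the transfer operators $L_i:\ell^2(A_{i+1})\to\ell^2(A_i)$ with kernel $G_R$, giving $\norm{L_i}^2\leq\sum_{a\in A_i,\,b\in A_{i+1}}G_R(a,b)^2$; the barriers are chosen \emph{randomly} (angles drawn independently in disjoint intervals) so that this sum is at most $1/4$, the expectation being controlled by Lemma~\ref{lem_GRbounded} together with a visual-angle computation. Composing the $N$ contractions yields $G_R(x,z;B(y,n)^c)\leq 2^{-N}=2^{-e^{\epsilon n}}$, and Lemma~\ref{lem_abstract_ancona} then delivers the Ancona inequality. The randomized multi-barrier construction and the uniform $\ell^2$ bound over spheres are the ideas your sketch is missing.
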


This result has been proved in \cite{gouezel_lalley} for cocompact
lattices of $\SL$, using very specific two-dimensional arguments. The
main idea in the new argument to follow is to combine a
supermultiplicativity estimate (originating in
\cite{peigne_supermultiplicative} for counting problems) with a
geometric construction of random barriers in hyperbolic space. We
will write $\lgth{x}$ for the distance of $x$ to the identity $e$
(for some fixed word distance), and $\Sbb_k$ for the sphere of radius
$k$ around $e$. The rest of this section is devoted to the proof of
Theorem~\ref{thm_ancona}. We fix a non-elementary Gromov-hyperbolic
group $\Gamma$ and an admissible probability measure $\mu$. We do not
assume yet that $\mu$ is symmetric, since it will only be important
in Lemma~\ref{lem_avoidballs} below.

\begin{lem}
\label{lem_rallonge}
There exists $C>0$ such that, for any $x,y\in \Gamma$, there exists
$a\in \Gamma$ of length at most $C$ such that $\lgth{xay}\geq
\lgth{x}+\lgth{y}$.
\end{lem}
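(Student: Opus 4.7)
The plan is to rewrite $\lgth{xay} = d(x^{-1}, ay)$ and to make the broken path $x^{-1} \to e \to a \to ay$ so close to a true geodesic that the length $\lgth{a}$ of its middle segment absorbs all hyperbolic error terms. For a concatenation of geodesic segments in a $\delta$-hyperbolic space, the deviation from a geodesic is controlled by the Gromov products at the two breakpoints, which here are $(x^{-1}\,|\,a)_e$ at $e$ and $(e\,|\,ay)_a = (a^{-1}\,|\,y)_e$ at $a$. If both products are bounded by a constant $K$ and $\lgth{a}$ is large compared to $K$ and $\delta$, then a standard hyperbolicity estimate (easily derived from Theorem~\ref{thm:tree_approx} applied to the four points $\{e, x, xa, xay\}$) gives
\begin{equation*}
\lgth{xay} \geq \lgth{x} + \lgth{a} + \lgth{y} - C_0(K, \delta),
\end{equation*}
which is at least $\lgth{x} + \lgth{y}$ as soon as $\lgth{a} \geq C_0$.

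The hard part is to produce, uniformly in arbitrary $x, y \in \Gamma$, a single element $a$ of bounded length for which both Gromov products are small. I would use a fixed finite pool of candidates, constructed as follows. Since $\Gamma$ is non-elementary, pick two hyperbolic elements $g_1, g_2 \in \Gamma$ whose four boundary fixed points $g_1^+, g_1^-, g_2^+, g_2^-$ are pairwise distinct, and set $\boA_N = \{g_1^N, g_1^{-N}, g_2^N, g_2^{-N}\}$ for a large integer $N$ still to be fixed. The set $\boA_N$ is symmetric under $a \mapsto a^{-1}$, every element has length at most $N\max(\lgth{g_1}, \lgth{g_2})$, and since each $g_j^{\epsilon N}$ lies close to the geodesic ray from $e$ toward $g_j^\epsilon$, the pairwise Gromov products $(a\,|\,a')_e$ for distinct $a, a' \in \boA_N$ are bounded by a constant $K_0$ independent of $N$.

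A pigeonhole argument on Gromov products then finishes. The $\delta$-hyperbolic inequality $(a\,|\,a')_e \geq \min((a\,|\,x^{-1})_e, (a'\,|\,x^{-1})_e) - \delta$ shows that at most one element of $\boA_N$ can have $(x^{-1}\,|\,a)_e > K_0 + \delta + 1$, for otherwise one of the pairwise products $(a\,|\,a')_e$ would exceed $K_0$. Using $\boA_N^{-1} = \boA_N$, the same argument applied to $y$ bounds the number of violators of the second condition by one as well. Hence at least two elements $a \in \boA_N$ satisfy both Gromov product bounds, and for any one of them the first paragraph applies. Fixing $N$ large enough that $\lgth{g_j^{\pm N}} \geq C_0(K_0 + \delta + 1, \delta)$ and setting $C = N \max(\lgth{g_1}, \lgth{g_2})$ completes the proof. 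The main technical obstacle is the piecewise-geodesic lower bound of the first paragraph, where one must lose only an additive constant and not a multiplicative factor; but this reduces directly to the tree case via Theorem~\ref{thm:tree_approx}.
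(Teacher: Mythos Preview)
Your proposal is correct and takes essentially the same approach as the paper: both use the candidate set $\{g_1^{\pm N}, g_2^{\pm N}\}$ for two hyperbolic elements with distinct fixed points, a pigeonhole argument showing that at most one candidate is ``bad'' for $x^{-1}$ and at most one for $y$, and then the tree approximation of $\{e, x, xa, xay\}$ to conclude. The only cosmetic difference is that you phrase the avoidance condition via bounded Gromov products $(x^{-1}\mid a)_e$ and $(a^{-1}\mid y)_e$, whereas the paper uses disjoint neighborhoods $V(a_\pm)$ of the boundary fixed points in $\Gamma\cup\partial\Gamma$; these are equivalent formulations of the same geometric fact.
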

\begin{proof}
Fix $C_0=C(4,\delta)>0$ such that any configuration of at most 4
points can be approximated by a tree with error at most $C_0$, as in
Theorem~\ref{thm:tree_approx}.

We will rely on the classical construction of free groups with two
generators in $\Gamma$ as follows. An hyperbolic element of $\Gamma$
is an element $u$ of $\Gamma$ such that the left-multiplication by
$u$ has two fixed points at infinity, an attracting one and a
repelling one, denoted by $u_+$ and $u_-$. Consider two hyperbolic
elements $u,v$ in $\Gamma$ such that the four points
$u_+,u_-,v_+,v_-$ are distinct (this is possible since $\Gamma$ is
non-elementary, see the proof of \cite[Theorem
8.37]{ghys_hyperbolique}), and fix small disjoint neighborhoods
$V(u_+),V(u_-),V(v_+),V(v_-)$ of those points in $\Gamma\cup\partial
\Gamma$. If $N$ is large enough, any $x$ in the complement of
$V(a_+)$ (for $a\in F=\{u, u^{-1}, v, v^{-1}\}$) shares only a short
beginning with $a^N$. More precisely, there exists $K>0$ independent
of $N$ such that, in a tree approximation $\Phi$ of $e,x,a^N$ with
error at most $C_0$, the branches from $\Phi(e)$ leading to $\Phi(x)$
and $\Phi(a^N)$ split before time $K$. Increasing $N$, we can also
assume that $\lgth{a^N}\geq 4K+3C_0$ for all $a\in F$.

\begin{figure}[htb]
\centering
  \begin{tikzpicture}[scale = 0.7]
  \draw[thick]
       (0,0) coordinate[label=left: {$\Phi(e)$}](e)
    -- (3,0) coordinate (first_split)
    -- ++(90:2) coordinate[label=right:{$\Phi(x)$}](x)
    (first_split)
    -- ++(-20:4) coordinate(second_split)
    -- ++( 25:3) coordinate[label=right:{$\Phi(xa^N y)$}](xaNy)
    (second_split)
    -- ++(-90:2) coordinate[label=right:{$\Phi(xa^N)$}]  (xaN);
  \draw [<->, thin]
     ($(first_split) + (-0.2,0)$) -- node[left] {$\leq K$} ($(x) + (-0.2,0)$);
  \draw [<->, thin]
     ($(second_split) + (-0.2,0)$) -- node[left]       {$\leq K$} ($(xaN) + (-0.2,0)$);
  \draw [<->, thin]
     ($(first_split) + (70:0.2)$)  -- node[above right] {$\geq 2K+2C_0$} ($(second_split) + (70:0.2)$);
  \end{tikzpicture}
\caption{Approximating tree for $e, x, xa^N, xa^Ny$.}
\label{approx_tree}
\end{figure}
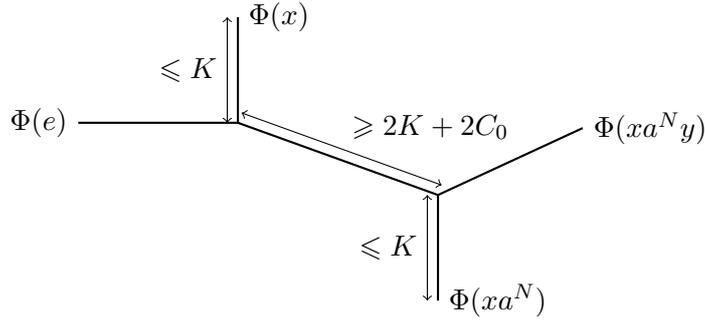

Consider now two points $x,y\in \Gamma$. One can choose $a\in F$ such
that $x^{-1}\not\in V(a_+)$ and $y\not\in V(a_-)$. Consider a tree
approximation $\Phi : \{e, x, xa^N, xa^N y\}\to T$. The geodesic
paths from $\Phi(x)$ to, respectively, $\Phi(e)$ and $\Phi(xa^N)$,
split before time $K$ by construction since $x^{-1}\not \in V(a_+)$.
In the same way, the paths from $\Phi(xa^N)$ to, respectively,
$\Phi(x)$ and $\Phi(xa^N y)$ also split before time $K$ since $y\not
\in V(a_-)$ (see~Figure~\ref{approx_tree}). Hence,
  \begin{align*}
  \lgth{xa^N y} &
  \geq
  d(\Phi(e),\Phi(xa^Ny))
  \\&\geq d(\Phi(e), \Phi(x)) + d(\Phi(x), \Phi(xa^N))
      + d(\Phi(xa^N), \Phi(xa^N y)) -  2 \cdot 2K
  \\&
  \geq \lgth{x}-C_0 + \lgth{a^N}-C_0 + \lgth{y} -C_0 -4K
  \\&
  \geq \lgth{x} + \lgth{y}.
  \qedhere
  \end{align*}
\end{proof}

We recall the notation $H_r(x,y)=G_r(x,y)G_r(y,x)$
from~\eqref{eq_defHr}.

\begin{lem}
\label{lem_GRbounded}
There exists $C>0$ such that, for any $k\in \N$, $\sum_{x\in \Sbb_k}
H_R(e,x) \leq C$.
\end{lem}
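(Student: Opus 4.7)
The plan is to establish a uniform shell-level supermultiplicativity
\begin{equation*}
\sum_{m = k+\ell}^{k+\ell+N} S_m(r) \;\geq\; c\, S_k(r)\, S_\ell(r), \qquad S_k(r) \coloneqq \sum_{x \in \Sbb_k} H_r(e, x),
\end{equation*}
valid for all $r\in[1,R]$ and all $k,\ell$ with constants $c>0$ and $N\in\N$ independent of $r$, and to convert it into a uniform bound on $\sup_k S_k(r)$ by exploiting the known finiteness of $\eta(r)$ for $r<R$, passing finally to $r=R$ by monotone convergence.

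For the pointwise ingredient, given $x\in\Sbb_k$ and $y\in\Sbb_\ell$, pick $a=a(x,y)$ from Lemma~\ref{lem_rallonge} with $|a|\leq C$ and $|xay|\geq k+\ell$. Decomposing paths from $e$ to $xay$ at the first visit to $xa$ via~\eqref{eq_Grcompose}, using the subadditivity~\eqref{eq_Fr_subadd} of $F_r$, and invoking Harnack~\eqref{eq_harnack} to bound $F_r(e,a),F_r(a,e)\geq c_0>0$ uniformly in $r\in[1,R]$ (since $|a|\leq C$), and doing the symmetric decomposition on the paths from $xay$ back to $e$, one obtains
\begin{equation*}
G_r(e,xay) \;\geq\; c_0\, F_r(e,x)\, G_r(e,y), \qquad G_r(xay,e) \;\geq\; c_0\, F_r(y,e)\, G_r(x,e).
\end{equation*}
Multiplying these, rewriting $F_r = G_r/G_r(e,e)$ and using the Guivarc'h bound $G_r(e,e)\leq G_R(e,e)<\infty$, one obtains a uniform $c'>0$ with
\begin{equation*}
H_r(e,xay) \;\geq\; c'\, H_r(e,x)\, H_r(e,y).
\end{equation*}

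To pass from this pointwise bound to the shell estimate, it suffices to show that the map $(x,y)\mapsto z(x,y)\coloneqq xa(x,y)y$ has uniformly bounded multiplicity on $\Sbb_k\times\Sbb_\ell$. Clearly $z(x,y)\in\Sbb_m$ for some $m\in[k+\ell,k+\ell+C]$. For fixed $z$, any preimage with chosen $a\in B_C$ produces the element $b\coloneqq xa$ satisfying $|b|\in[k-C,k+C]$ and $d(b,z)=|y|=\ell$, hence $0\leq |b|+d(b,z)-|z|\leq C$. Applying Theorem~\ref{thm:tree_approx} to $\{e,b,z\}$, this quasi-geodesic condition forces $b$ to lie within a bounded distance of any geodesic from $e$ to $z$ at parameter $\approx k$; the possible values of $b$ thus form a set of cardinality depending only on $\delta$ and $C$. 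Given $b$, the pair $(x,a)$ with $|x|=k$, $|a|\leq C$ and $xa=b$ is confined to a ball of radius $C$ around $b$ intersected with $\Sbb_k$, and then $y=b^{-1}z$ is determined. This yields a uniform multiplicity bound $M<\infty$, and summing the pointwise estimate over $(x,y)\in\Sbb_k\times\Sbb_\ell$ and dividing by $M$ produces the shell-level supermultiplicativity with $c=c'/M$ and $N=\lceil C\rceil$.

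To conclude, fix $r<R$. Since $\sum_k S_k(r)=\eta(r)<\infty$ by~\eqref{eq_eta}, $s(r)\coloneqq\sup_k S_k(r)<\infty$. The left-hand side of the shell supermultiplicativity contains $N+1$ terms each $\leq s(r)$, so taking the supremum over $k,\ell$ gives $(N+1)s(r)\geq c\, s(r)^2$, hence $s(r)\leq (N+1)/c$ uniformly in $r<R$. Each finite sum $S_k(r)$ is a continuous increasing function of $r$ on $[1,R]$, so $S_k(R)=\lim_{r\to R^-}S_k(r)\leq (N+1)/c$, which is the claim. The main technical obstacle is the multiplicity bound above: it is the only place where $\delta$-hyperbolicity is used in an essential way, and without it one could at best bound the number of preimages by the cardinality of $\Sbb_k$, which would destroy the self-bounding argument.
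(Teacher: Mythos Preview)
Your proof is correct and follows essentially the same route as the paper's: establish the supermultiplicative shell inequality $S_k(r)S_\ell(r)\leq C\sum_{m=k+\ell}^{k+\ell+N}S_m(r)$ via Lemma~\ref{lem_rallonge}, a pointwise bound $H_r(e,x)H_r(e,y)\leq C\,H_r(e,xay)$, and a bounded-multiplicity argument using tree approximation, then self-bound the maximum of $S_k(r)$ for $r<R$ and pass to the limit. The only cosmetic difference is that the paper obtains the pointwise bound in one line from~\eqref{eq_trivial_ancona} (which already packages the $F_r$-supermultiplicativity and Harnack steps you spell out), and it works directly with $x$ rather than $b=xa$ in the multiplicity argument.
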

\begin{proof}
Fix $r<R$. Write $u_k(r) = \sum_{x\in \Sbb_k} H_r(e,x)$. To $x\in
\Sbb_k$ and $y\in \Sbb_\ell$ one can associate thanks to the previous
lemma a point $\Phi(x,y)= xay \in \bigcup_{k+\ell\leq i \leq
k+\ell+C} \Sbb_i$. By~\eqref{eq_trivial_ancona}, we have
  \begin{align*}
  H_r(e,x)H_r(e,y) & \leq C H_r(e,x)H_r(e,a)H_r(e,y)
  = C H_r(e,x) H_r(x, xa) H_r(xa, xay)
  \\&
  \leq C H_r(e, xay).
  \end{align*}

Let us estimate the number of preimages under $\Phi$ of some point
$z$. Let $\gamma_z$ be a geodesic segment from $e$ to $z$. If $z=xay$
and $x$ is far away from $\gamma_z$, a tree approximation shows that
$\lgth{z}$ is significantly smaller than
$\lgth{x}+\lgth{a}+\lgth{y}$. This is impossible by construction.
Therefore, $x$ is contained in a ball of fixed radius $B(\gamma_z(k),
C)$. In particular, the number of possibilities for $x$ is uniformly
bounded. Arguing in the same way for $y$, we deduce that, for some
$C>0$, each point has at most $C$ preimages under $\Phi$.

Finally,
  \begin{align*}
  u_k(r) u_\ell(r)
  &= \sum_{x\in \Sbb_k, y\in \Sbb_\ell} H_r(e,x) H_r(e,y)
  \leq C \sum_{x\in \Sbb_k, y\in \Sbb_\ell} H_r(e, \Phi(x,y))
  \\&
  \leq C \sum_{i=k+\ell}^{k+\ell+C} \sum_{z\in \Sbb_i} H_r(e,z)
  \leq C \sum_{i=k+\ell}^{k+\ell+C} u_i(r).
  \end{align*}
As $r<R$, the sum $\sum_{x\in \Gamma} H_r(e,x)$ is finite
by~\eqref{eq_eta}. In particular, the sequence $u_k(r)$ is summable,
and reaches its maximum $M(r)$ at some index $k_0(r)$. Using the
previous equation with $k=\ell=k_0(r)$, we get $M(r)^2 \leq
C(C+1)M(r)$, hence $M(r)\leq C(C+1)=D$.

Finally, for every $r<R$, for every $k\in \N$, one has $\sum_{x\in
\Sbb_k} H_r(e,x) \leq D$. The lemma follows by letting $r$ tend to
$R$.
\end{proof}

The following lemma is the main estimate in the proof of
Theorem~\ref{thm_ancona}. It gives superexponentially small estimates
for the $R$-probabilities of paths staying too far away from
geodesics, implying that such paths are very unlikely and will not
contribute a lot to $G_R$.
\begin{lem}
\label{lem_avoidballs}
Assume that $\mu$ is finitely supported and symmetric. There exist
$n_0>0$ and $\epsilon>0$ such that, for any $n\geq n_0$, for any
$x,y,z\in \Gamma$ on a geodesic segment (in this order) with
$d(x,y)\geq n$ and $d(y,z)\geq n$,
  \begin{equation*}
  G_R(x,z; B(y,n)^c) \leq 2^{-e^{\epsilon n}}.
  \end{equation*}
\end{lem}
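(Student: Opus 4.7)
The target is super-exponentially small, so the bound cannot come from a mere linear gain per unit distance. I plan to establish a self-improving inequality of the form
\begin{equation*}
\psi(n+c) \leq C \psi(n)^{2}, \qquad \psi(n) \coloneqq \sup G_R(x,z; B(y,n)^c),
\end{equation*}
with the supremum over admissible triples $(x,y,z)$ (possibly a slightly enlarged class allowing $y$ at bounded distance from the geodesic $[x,z]$). Starting from a base case where $\psi(n_0) < 1/(2C)$, iteration of the doubling bound yields $\psi(n) \leq 2^{-2^{(n - n_0)/c}}$, which is exactly of the form $2^{-e^{\epsilon n}}$ with $\epsilon = (\log 2)/c$ for $n$ large.

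For the base case I would prove $\psi(n) \to 0$ as $n \to \infty$ using Lemma~\ref{lem_GRbounded}. Any path from $x$ to $z$ in $B(y,n)^c$ realizes its minimum distance $k \geq n$ from $y$ at some vertex $w$, so after splitting at $w$,
\begin{equation*}
G_R(x, z; B(y,n)^c) \leq \sum_{k \geq n} \sum_{w \in \Sbb_k(y)} G_R(x,w) G_R(w,z).
\end{equation*}
Symmetry of $\mu$ gives $G_R(w,z) = G_R(z,w)$, so Cauchy--Schwarz in each sphere bounds the inner sum by $\bigl(\sum_{w \in \Sbb_k(y)} H_R(x,w)\bigr)^{1/2}\bigl(\sum_{w \in \Sbb_k(y)} H_R(z,w)\bigr)^{1/2}$. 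A tree approximation of $\{x,y,w\}$ shows that for $w \in \Sbb_k(y)$, the distance $d(x,w)$ lies in a narrow window around $d(x,y)+k$ (because $y$ is on the geodesic $[x,z]$ and $k \geq n$ forces $w$ off this geodesic), so Lemma~\ref{lem_GRbounded} together with exponential decay of $G_R$ in the distance gives uniform control of the sphere sum and summability in $k$, with a bound that tends to $0$ as $n \to \infty$.

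The self-improving step is the main obstacle. Given an admissible triple at scale $n+c$ and a path $\gamma$ from $x$ to $z$ in $B(y,n+c)^c$, one wants to cut $\gamma$ at an intermediate vertex $w$ so that each half is itself a bad path at scale $n$. A tree approximation for $\{x,y,z,w\}$ locates the branching vertex $p$ of $w$ from $[x,z]$; provided $c$ is larger than the tree-approximation constant, one exhibits pivots $y_1$ on $[x,w]$ and $y_2$ on $[w,z]$ close to $y$ (respectively close to $p$ in the far-bypass case), and the original avoidance of $B(y,n+c)$ implies the required avoidance of $B(y_i,n)$. The number of candidate midpoints is controlled via the ``random barriers'' construction built from the free subgroup provided by Lemma~\ref{lem_rallonge}: translates by suitable hyperbolic elements yield a finite family of barriers that a bad path must cross, and symmetry of $\mu$ permits a Cauchy--Schwarz summation over translates that decouples the two halves. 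Unfavorable configurations, where the path makes a long bypass of the geodesic $[x,z]$, must be handled separately using the trivial Ancona inequality~\eqref{eq_trivial_ancona} together with the exponential decay of $G_R$ in distance; I expect this case analysis to be the most delicate point, and the place where symmetry of $\mu$ plays a genuinely irreplaceable role.
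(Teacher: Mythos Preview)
Your bootstrapping scheme $\psi(n+c)\leq C\psi(n)^2$ does not go through as sketched, and the paper's proof takes a quite different route.

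The central gap is in the self-improving step. When you cut a path $\gamma$ from $x$ to $z$ (avoiding $B(y,n+c)$) at some vertex $w$, you need \emph{both} halves to be bad paths at scale $n$: each must avoid a ball of radius $n$ centred on a point of its own geodesic. But the only constraint you have is that $\gamma$ avoids $B(y,n+c)$. If the branching point $p$ of $w$ off $[x,z]$ lies, say, between $x$ and $y$ and far from $y$, then the geodesic $[x,w]$ never comes near $y$; the half-path from $x$ to $w$ may well be a near-geodesic and avoid no ball whatsoever on $[x,w]$. Your fallback (``pivots close to $p$ in the far-bypass case'') does not help: avoiding $B(y,n+c)$ gives no information about $B(p,n)$ when $p$ is far from $y$. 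Even if one could identify a good pivot $y'$ for each $w$, the resulting bound is $\sum_w \psi(n)^2$ over infinitely many cut points, and there is no mechanism to control that sum by a single factor $C$. This decoupling problem is exactly what a bootstrapping-by-one-cut cannot solve.

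A second issue is that you repeatedly invoke ``exponential decay of $G_R$ in the distance'', both for the base case (summability in $k$) and for the far-bypass case. No such decay is available at this point: it is a \emph{consequence} of Ancona inequalities, which are the goal. At this stage one only has the sphere bound of Lemma~\ref{lem_GRbounded}, and your base-case estimate $\sum_{k\geq n}\sum_{w\in\Sbb_k(y)} G_R(x,w)G_R(w,z)$ is, after Cauchy--Schwarz and the trivial inequality~\eqref{eq_trivial_ancona}, only $\sum_{k\geq n} O(1)$, which diverges. (Your claim that $d(x,w)$ lies in a narrow window around $d(x,y)+k$ is also false: take $w$ near $x$.)

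The paper does not bootstrap at all. It constructs $N\sim e^{\epsilon n}$ barriers $A_1,\dotsc,A_N$ \emph{in one shot}, so that any path from $x$ to $z$ in $B(y,n)^c$ must cross each in turn; an operator-norm argument then gives $G_R(x,z;B(y,n)^c)\leq\prod_i\|L_i\|\leq 2^{-N}$ once each consecutive-barrier sum $\sum_{a\in A_i,\,b\in A_{i+1}}G_R(a,b)^2$ is at most $1/4$. The barriers come from the Bonk--Schramm embedding of $\Gamma$ into $\HH^m$: they are preimages of hyperbolic half-cones at angles $\theta_1<\dotsb<\theta_N$ from the geodesic through $x,z$. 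The crucial geometric fact is that a point at distance $k$ from the origin subtends a visual angle $\leq Ce^{-\alpha k}$; since all relevant points lie outside $B(e,n)$, one can pack $e^{\epsilon n}$ angular slots and still have each point appear in a random barrier with probability $\leq Ce^{-\alpha n}\cdot N$. A probabilistic (averaging) choice of the $\theta_i$ then yields the required bound on $\sum G_R(a,b)^2$, where symmetry enters exactly once to rewrite $G_R(e,u)^2=H_R(e,u)$ and invoke Lemma~\ref{lem_GRbounded}. This is where both the super-exponential count and the control of the sums over barrier points come from simultaneously; neither is accessible via your single-cut recursion.
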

\begin{proof}
Without loss of generality, one can assume $y=e$.

Fix some $\epsilon>0$ very small, and let $N=\lfloor e^{\epsilon n}
\rfloor$. In this proof, we will write $C$ for a generic constant
independent of $\epsilon$. The idea of the proof is to construct $N$
\emph{barriers} $A_1,\dotsc, A_N$ such that any trajectory of the
random walk going from $x$ to $z$ outside of $B(e,n)$ has to go
through $A_1$, then $A_2$, and so on. Decomposing a trajectory
according to its first visit to $A_1$, then $A_2$, and so on, we
obtain as in~\eqref{eq_Grcompose}
  \begin{equation}
  \label{eq_mlwkxcvoiu}
  G_R(x,z; B(e,n)^c) \leq \sum_{a_1\in A_1} \dotsm \sum_{a_N\in A_N}
  G_R(x, a_1) G_R(a_1,a_2)\dotsm G_R(a_{N-1}, a_N) G_R(a_N, z).
  \end{equation}
We will construct the barriers so that, writing $A_0=\{x\}$ and
$A_{N+1}=\{z\}$, one has for any $0\leq i \leq N$
  \begin{equation}
  \label{eq_proba_between_barriers}
  \sum_{a\in A_i} \sum_{b\in A_{i+1}} G_R(a, b)^2 \leq 1/4.
  \end{equation}
This implies the desired estimate on $G_R(x, z; B(y,n)^c)$ by
Cauchy-Schwarz, as follows. To write it formally, it is more
convenient to express things in terms of operators, as in
\cite{ledrappier_hyperbolic}. Define an operator $L_i:
\ell^2(A_{i+1}) \to \ell^2(A_i)$ by $L_i f(a)=\sum_{b\in A_{i+1}}
G_R(a,b) f(b)$. The sum to estimate in~\eqref{eq_mlwkxcvoiu} is
$(L_0\dotsm L_N \delta_z)(x)$, it is therefore bounded by $\prod
\norm{L_i}$. Moreover,
  \begin{align*}
  \norm{L_i f}_{\ell^2}^2
  & = \sum_{a\in A_i} \abs{\sum_{b\in A_{i+1}} G_R(a,b) f(b)}^2
  \leq\sum_{a\in A_i} \left(\sum_{b\in A_{i+1}} G_R(a,b)^2\right) \cdot \left( \sum_{b\in A_{i+1}} \abs{f(b)}^2\right)
  \\&
  = \left(\sum_{a\in A_i}\sum_{b\in A_{i+1}} G_R(a,b)^2 \right) \norm{f}_{\ell^2}^2.
  \end{align*}
With~\eqref{eq_proba_between_barriers}, we obtain $\norm{L_i} \leq
\left(\sum_{a\in A_i}\sum_{b\in A_{i+1}} G_R(a,b)^2 \right)^{1/2}
\leq 1/2$, and the result of the lemma follows.

It remains to construct barriers
satisfying~\eqref{eq_proba_between_barriers}. The construction is
geometric, and is done in the hyperbolic space $\HH^m$ for some
$m\geq 2$ (or rather its model as the euclidean unit ball in
$\R^{m}$, with the boundary at infinity identified with the unit
sphere $S^{m-1}$ in $\R^{m}$). By \cite{bonk_schramm}, the group
$\Gamma$ with its word metric is roughly similar to a subset of such
a space: if $m$ is large enough, there exists a mapping $\Psi: \Gamma
\to \HH^m$ and $\lambda>0$, $C>0$ such that $\abs{\lambda
d_{\HH}(\Psi(u), \Psi(v)) - d(u,v)}\leq C$ for all $u,v\in \Gamma$.
The image under $\Psi$ of a geodesic in $\Gamma$ is a quasi-geodesic
in $\HH^m$, therefore it remains uniformly close to a true hyperbolic
geodesic (see for instance~\cite[Theorem 5.11]{ghys_hyperbolique}).
It follows that it does not make a serious difference to use
geodesics in $\Gamma$ or in $\HH^m$.

The hyperbolic geodesic from $\Psi(x)$ to $\Psi(z)$ can be extended
biinfinitely. Composing with an hyperbolic isometry, we can assume
that this geodesic goes through the center $O$ of the ball model of
$\HH^m$, and that $\Psi(e)$ is a bounded distance away from $O$. Let
$\xi$ be the endpoint of this hyperbolic geodesic in negative time.
To an angle $\theta \in [0,\pi]$, we associate the union of all the
semiinfinite geodesics $[O\zeta)$ (with $\zeta\in S^{m-1}$) making an
angle $\theta$ with $[O\xi)$ (its boundary at infinity is the circle
of points at distance $\theta$ of $\xi$ in $S^{m-1}$). Let then
$A(\theta)$ be the set of points $a$ in $B(e,n)^c\subset \Gamma$ such
that $\Psi(a)$ is at a distance at most $C_0$ of such a geodesic. If
$C_0$ is chosen large enough, a path of the random walk going from
$x$ to $z$ in $B(e,n)^c$ can not jump over $A(\theta)$ since $\mu$
has finite support, so that $A(\theta)$ is a barrier.

In $X=[0,\pi]$, consider $X_i = [(2i-1)/N, 2i/N]$ (for $1\leq i\leq
N$). Those intervals are separated by $1/N\sim e^{-\epsilon n}$. In
each of them, we will choose an angle $\theta_i$ and let
$A_i=A(\theta_i)$. One should then ensure
that~\eqref{eq_proba_between_barriers} is satisfied. To do so, we
will choose each $\theta_i$ at random as follows. Let $\Omega =
\prod_{i=1}^N X_i$, endowed with the product of the probability
measures $N\dLeb$ on $X_i$. Define a function $f_i$ on $\Omega$ by
  \begin{equation*}
  f_i(\theta_1,\dotsc,\theta_N) = \sum_{a\in A(\theta_i), b\in A(\theta_{i+1})} G_R(a,b)^2,
  \end{equation*}
where by convention $A(\theta_0)=\{x\}$ and $A(\theta_{N+1}) =
\{z\}$. One should find a value of
$\bartheta=(\theta_1,\dotsc,\theta_N)$ such that $f_i(\bartheta) \leq
1/4$ for all $i$. We will show that
  \begin{equation}
  \label{eq_intfi}
  \int f_i \leq C e^{-\rho n},
  \end{equation}
for some $\rho>0$ independent of $\epsilon$. It follows that $\int
(\sum f_i) \leq C(1+N) e^{-\rho n} \leq C(1+e^{\epsilon n})e^{-\rho
n}$. Choosing $\epsilon$ small enough, this is exponentially small,
and is in particular bounded by $1/4$ for large enough $n$. This
yields a point $\bartheta$ with $\sum f_i(\bartheta) \leq 1/4$, for
which the corresponding barriers
satisfy~\eqref{eq_proba_between_barriers}.

Let us now prove~\eqref{eq_intfi}. We will only give the argument for
$1\leq i\leq N-1$: the case of $f_0$ and $f_N$ is slightly different
(since $A_0=\{x\}$ and $A_{N+1}=\{z\}$ are fixed), it turns out to be
analogous to the general case, but simpler. Fix some $i\in [1,N-1]$.
To each $a\in \Gamma$ and $j\in \{i, i+1\}$, we associate the set
$X_j(a)$ of angles $\theta$ in $X_j$ such that $a\in A_j(\theta)$. By
definition,
  \begin{equation*}
  \int f_i = \sum_{a,b \in \Gamma} G_R(a,b)^2\cdot  N \Leb(X_i(a)) \cdot N\Leb(X_{i+1}(b)).
  \end{equation*}
For $a\in \Sbb_k$, its image under $\Psi$ is at distance at least
$\alpha k$ of $O$ in $\HH^m$, for some $\alpha>0$. If one moves away
from this point by at most $C_0$, the visual angle from $O$ varies by
at most $C e^{-\alpha k}$. It follows that $\Leb(X_j(a)) \leq C
e^{-\alpha k}$. Since $N\leq e^{\epsilon n}$, we obtain
  \begin{equation*}
  \int f_i \leq C e^{2\epsilon n} \sum_{a,b} G_R(a,b)^2 e^{-\alpha \lgth{a}} e^{-\alpha \lgth{b}},
  \end{equation*}
where the sum is restricted to those $a$ and $b$ outside of $B(e,n)$
and whose images under $\Psi$ belong to the $C_0$-neighborhoods of
the sectors delimited respectively by $X_i$ and $X_{i+1}$. Writing
$u=a^{-1}b$ (with $\lgth{u}\leq \lgth{a}+\lgth{b}$), we get
  \begin{equation*}
  \int f_i \leq C e^{2\epsilon n} \sum_u G_R(e,u)^2 e^{-\alpha \lgth{u}} N(u),
  \end{equation*}
where $N(u)$ is the number of ways to decompose $u$ as $a^{-1}b$. Fix
a point $u$, and such a decomposition $u=a^{-1}b$.

The hyperbolic geodesics from $O$ to, respectively, $\Psi(a)$ and
$\Psi(b)$, make an angle at least $e^{-\epsilon n}/2$. Therefore,
they are far away from each other outside of the ball $B(O, 2\epsilon
n)$. It follows from a tree approximation that
  \begin{equation*}
  d_{\HH}(\Psi(a), \Psi(b)) \geq \lgth{\Psi(a)} + \lgth{\Psi(b)} - 4\epsilon n -C.
  \end{equation*}
Since $\Psi$ is a quasisimilarity, we deduce that $\lgth{u} = d(a,b)
\geq \lgth{a}+\lgth{b} - C\epsilon n - C$. In particular, if
$\epsilon$ is small enough, since $\lgth{a}\geq n$ and $\lgth{b}\geq
n$, we obtain $\lgth{u}\geq n$. It also follows from this argument
that a geodesic in the group from $a$ to $b$ has to pass through the
ball $B(e, C\epsilon n)$, since geodesics in the group and in
hyperbolic space remain a bounded distance away. Let $\gamma$ be a
geodesic segment from $e$ to $u$ in $\Gamma$, then $a \gamma$ is a
geodesic segment from $a$ to $b$. There exists a time $j$ such that
$a \gamma(j) \in B(e, C\epsilon n)$. Finally, $a\in
\bigcup_{j=0}^{\lgth{u}} \gamma(j)^{-1} B(e, C\epsilon n)$, which
gives at most $(\lgth{u}+1) C^{C\epsilon n}$ possibilities for $a$.
Arguing similarly for $b$, be obtain $N(u) \leq (\lgth{u}+1)^2
e^{C\epsilon n}$ for some $C>0$.

Finally, we have
  \begin{equation*}
  \int f_i \leq C e^{2\epsilon n} \sum_{\lgth{u}\geq n} G_R(e,u)^2 e^{-\alpha\lgth{u}} (\lgth{u}+1)^2 e^{C\epsilon n}
  \leq C e^{(C+2)\epsilon n} \sum_{\lgth{u}\geq n} G_R(e,u)^2 e^{-\alpha \lgth{u}/2}.
  \end{equation*}
Since $\sum_{\lgth{u}=k} G_R(e,u)^2$ equals $\sum_{\lgth{u}=k}
H_R(e,u)$ by symmetry of $\mu$, it is uniformly bounded by
Lemma~\ref{lem_GRbounded}. Therefore, $\int f_i$ is bounded by $C
e^{C\epsilon n} e^{-\alpha n/2}$. If $\epsilon$ is small enough, this
is at most $Ce^{-\alpha n/4}$. This proves~\eqref{eq_intfi} and
concludes the proof of the lemma.
\end{proof}

The following lemma is proved in~\cite{gouezel_lalley}, and is
elementary (see the proof of Theorem~4.1 there):
\begin{lem}
\label{lem_abstract_ancona}
Let $\mu$ be an admissible measure on a Gromov-hyperbolic group.
Assume that, for all $K>0$, there exists $n_0$ such that, for all
$n\geq n_0$, for all points $x,y,z$ on a geodesic segment (in this
order) with $d(x,y)\in [n,100n]$ and $d(y,z)\in [n,100 n]$, one has
$G_R(x,z; B(y,n)^c) \leq K^{-n}$. Then $\mu$ satisfies uniform Ancona
inequalities. It even satisfies strong uniform Ancona inequalities
(as defined below in Definition~\ref{def_strong_ancona}).
\end{lem}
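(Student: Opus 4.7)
The plan is to reduce the Ancona inequality to the hypothesis via a first-visit decomposition at a ball around $y$, with the geometry of $B(y,n)$ tailored to the scaling in the hypothesis. Given $x,y,z$ on a geodesic with $d(x,y), d(y,z)\in [n, 100n]$, I would write
\begin{equation*}
G_R(x, z) = G_R(x, z; B(y, n)^c) + \sum_{a \in B(y, n)} G_R(x, a; B(y, n)^c)\, G_R(a, z).
\end{equation*}
Applying the hypothesis with $K$ chosen much larger than the Harnack constant raised to a suitable power, the first term is at most $K^{-n}$, which is superexponentially smaller than the target lower bound $G_R(x, y) G_R(y, z) \geq c^{d(x,y) + d(y,z)}$ coming from Harnack inequalities, and is therefore negligible.

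For the main sum, the key observation is that $G_R(x, a; B(y, n)^c)$ vanishes unless $a$ lies within one $\mu$-step of $\partial B(y, n)$, so the effective range of summation is a spherical shell of size at most $C^n$ around $y$. For such $a$, Harnack relates $G_R(x, a)$ and $G_R(a, z)$ to $G_R(x, y)$ and $G_R(y, z)$ up to factors exponential in $n$, and tree approximation in the hyperbolic group shows that $d(x,a) + d(a,z) \approx d(x,z)$, so $a$ is forced near the geodesic from $x$ to $z$. The main obstacle I expect is the $C^n$ blowup from summing over the shell together with the Harnack exponentials: a naive estimate produces a constant exponential in $n$. To absorb this one must exploit the freedom to take $K$ arbitrarily large in the hypothesis, either by iterating the decomposition over multiple scales or by applying the hypothesis a second time (to a point on the geodesic between $x$ and $y$, or $y$ and $z$) to bound $\sum_a G_R(x, a; B(y, n)^c) G_R(a, y)$ directly in terms of $G_R(x, y)$, cancelling the Harnack blowup.

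Once the weak Ancona inequality $G_R(x,z) \leq C\, G_R(x,y)\, G_R(y,z)$ is proven in the balanced regime $d(x,y), d(y,z) \in [n, 100n]$, the general case (with any $y$ close to a geodesic from $x$ to $z$ of arbitrary length) follows by placing intermediate points $y_0=y, y_1, \ldots$ on the geodesic at spacing comparable to $n$ and chaining together the individual Ancona inequalities, using~\eqref{eq_trivial_ancona} to avoid accumulating constants. Finally, the upgrade from weak to strong uniform Ancona is a classical consequence: iterating the weak Ancona inequality at two points $y_1, y_2$ far from the endpoints along the geodesic produces a contractive recursion on the ratios $G_R(x,\xi_1)G_R(\xi_2, z)/G_R(x,\xi_2)G_R(\xi_1, z)$, yielding exponential decay of Martin-kernel differences in the distance from $y$ to the geodesic.
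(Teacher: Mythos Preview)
Your overall strategy---first-visit decomposition at $B(y,n)$ followed by a multi-scale iteration---matches the approach the paper defers to in \cite{gouezel_lalley}. But the proposal leaves the central step as a mere option: you write that one must ``iterate the decomposition over multiple scales'' without saying how, and this is exactly where the content lies. The point of the recursion is to reduce to a ball of \emph{fixed} radius $n_0$ around $y$, so that the first-visit sum has boundedly many terms and Harnack costs only a bounded factor; to get there from scale $n$ one applies the hypothesis repeatedly at a decreasing sequence of scales along the geodesic, each time peeling off a contribution controlled by $K^{-n_k}$ with $K$ chosen large enough to dominate the combinatorial losses at that scale. Your second alternative (apply the hypothesis once more at a nearby point) does not close the gap: the identity $\sum_{a} G_R(x,a;B(y,n)^c)\,G_R(a,y)=G_R(x,y)$ is already exact, and what remains is to compare $G_R(a,z)$ with $G_R(a,y)G_R(y,z)$---the very Ancona inequality under proof.

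Two further issues. Your chaining for unbalanced $y$ via intermediate points and \eqref{eq_trivial_ancona} does not avoid accumulating constants: each application of the balanced inequality contributes a factor $C$, the number of steps grows like $\log\bigl(d(x,z)/\min(d(x,y),d(y,z))\bigr)$, and \eqref{eq_trivial_ancona} only recombines products of Green functions without cancelling those factors. The argument in \cite{gouezel_lalley} handles arbitrary $y$ directly within the recursive scheme rather than via a separate reduction to the balanced case. Finally, the paper explicitly notes (just before Theorem~\ref{thm_ancona_strong}) that strong Ancona inequalities are \emph{not} formal consequences of the weak ones; the contractive recursion on Martin-kernel ratios you describe requires Ancona inequalities \emph{relative to domains}, which do follow from the superexponential hypothesis of the lemma but not from the global inequality alone.
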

To prove this lemma, one uses recursively its assumptions to show
that most $r$-weight is concentrated on paths staying close enough to
the geodesic from $x$ to $z$, and in particular passing in a ball of
fixed radius around $y$. This lemma, together with
Lemma~\ref{lem_avoidballs}, proves uniform Ancona inequalities for
symmetric measures, i.e., Theorem~\ref{thm_ancona}. Strong uniform
Ancona inequalities (see below) are then deduced as in
done~\cite[Theorem 4.6]{gouezel_lalley}.

We need the following strengthening of Ancona inequalities:

\begin{definition}
\label{def_strong_ancona}
A measure $\mu$ on a Gromov-hyperbolic group satisfies strong uniform
Ancona inequalities if it satisfies uniform Ancona inequalities and,
additionally, there exist constants $C>0$ and $\rho>0$ such that, for
all points $x,x',y,y'$ whose configuration is approximated by a tree
as follows
\begin{center}
  \begin{tikzpicture}
  \draw[thick]
    (0,0) -- +(160:1cm)   node[left] {$x$}
    (0,0) -- +(190:0.7cm) node[left] {$x'$}
    (0,0) -- (5,0) coordinate (y0)
    (y0) -- +(10:1cm)     node[right] {$y$}
    (y0) -- +(-10:0.6cm) node[right] {$y'$};
  \draw[<->] (0,-0.14) -- node[below] {$\geq n$} (5, -0.14);
  \end{tikzpicture}
\end{center}
for any $r\in [1,R]$,
  \begin{equation}
  \label{eq_strong_ancona}
  \abs{ \frac{G_r(x,y)/G_r(x',y)}{G_r(x,y')/G_r(x',y')} -1} \leq C e^{-\rho n}.
  \end{equation}
\end{definition}

Ancona inequalities ensure that the quantity
$\frac{G_r(x,y)/G_r(x',y)}{G_r(x,y')/G_r(x',y')}$ in the definition
is bounded from above and from below. Strong Ancona inequalities
ensure that this quantity is exponentially close to $1$ in terms of
the distance between the sets of points $\{x,x'\}$ and $\{y,y'\}$.
These bounds are not formal consequences of Ancona inequalities, but
they are consequences of Ancona inequalities in suitable domains
(that follow from Lemma~\ref{lem_avoidballs}). Applying
Lemmas~\ref{lem_avoidballs} and~\ref{lem_abstract_ancona}, we obtain
the following result, strengthening Theorem~\ref{thm_ancona}.

\begin{thm}
\label{thm_ancona_strong}
If $\mu$ is admissible, finitely supported and symmetric on a
non-elementary Gromov-hyperbolic group, it satisfies strong uniform
Ancona inequalities.
\end{thm}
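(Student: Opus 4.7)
The plan is to feed the superexponential decay estimate from Lemma~\ref{lem_avoidballs} into the abstract criterion Lemma~\ref{lem_abstract_ancona}, which is designed precisely to upgrade such decay into strong uniform Ancona inequalities.

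First I would observe that Lemma~\ref{lem_avoidballs}, applied to our symmetric finitely supported admissible measure $\mu$ on the non-elementary Gromov-hyperbolic group $\Gamma$, produces constants $n_0$ and $\epsilon>0$ such that $G_R(x,z;B(y,n)^c)\leq 2^{-e^{\epsilon n}}$ whenever $x,y,z$ lie on a geodesic segment in that order with $d(x,y),d(y,z)\geq n$ and $n\geq n_0$. Since the doubly exponential bound $2^{-e^{\epsilon n}}$ eventually dominates any geometric sequence, for every fixed $K>0$ one has $2^{-e^{\epsilon n}}\leq K^{-n}$ as soon as $n$ is large enough. In particular, the hypothesis of Lemma~\ref{lem_abstract_ancona} --- that for every $K>0$ there exists $n_0$ with $G_R(x,z;B(y,n)^c)\leq K^{-n}$ whenever $d(x,y)$ and $d(y,z)$ lie in $[n,100n]$ --- is satisfied with considerable room to spare.

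Having verified this hypothesis, I would simply invoke Lemma~\ref{lem_abstract_ancona}, whose conclusion is that under this decay assumption the measure $\mu$ satisfies strong uniform Ancona inequalities in the sense of Definition~\ref{def_strong_ancona}. This yields exactly the bound \eqref{eq_strong_ancona} claimed by the theorem, with constants $C$ and $\rho$ produced by the proof of Lemma~\ref{lem_abstract_ancona}.

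The only apparent obstacle is conceptual rather than technical: one might worry that strong Ancona --- which asserts exponentially fast convergence of the ratio in \eqref{eq_strong_ancona} to $1$ --- requires more quantitative input than plain uniform Ancona. This is handled inside Lemma~\ref{lem_abstract_ancona} (proved in \cite{gouezel_lalley}) by an iterative argument that concentrates almost all the $r$-weight of trajectories from $\{x,x'\}$ to $\{y,y'\}$ on paths hugging the separating geodesic; once such concentration is established, the four Green functions nearly factor through a common narrow tube, and the $x$-side and $y$-side contributions decouple up to an exponentially small error, which is exactly \eqref{eq_strong_ancona}. In the present setting all the genuine geometric and probabilistic work has already been carried out in Lemma~\ref{lem_avoidballs}, so Theorem~\ref{thm_ancona_strong} follows as a direct corollary.
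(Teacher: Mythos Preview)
Your proposal is correct and follows exactly the paper's own argument: combine the superexponential decay of Lemma~\ref{lem_avoidballs} with the abstract criterion of Lemma~\ref{lem_abstract_ancona} to conclude strong uniform Ancona inequalities. The paper states this in a single sentence preceding the theorem, and your additional conceptual remarks about how the strong form is obtained inside Lemma~\ref{lem_abstract_ancona} are accurate commentary rather than a departure from the method.
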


When one takes $x'=e$, then the quantity appearing
in~\eqref{eq_strong_ancona} is the ratio $K_{r,y}(x)/K_{r,y'}(x,)$ of
the Martin kernels. The theorem implies that, when $y_i$ tends to a
point $\xi\in \partial \Gamma$, the sequence $K_{r,y_i}(x)$ is a
Cauchy sequence (since the points $x,e$ and $y_i,y_j$ satisfy the
assumptions of the definition with a large $n$ for large enough
$i,j$). Hence, it converges to a function $K_{r,\xi}(x)$. This is the
main step in the proof that the Martin boundary for $r$-harmonic
functions coincides with the geometric boundary (one should also
check that $K_{r,\xi} \not=K_{r,\eta}$ for $\xi\not=\eta$, which is
easy). We omit the (classical) details, see for instance
\cite{izumi_hyperbolic}.

\section{Asymptotics of the Green function}
\label{sec_diff_eq}

Let $\Gamma$ be a non-elementary Gromov-hyperbolic group. Our goal in
this section is to prove the following theorem.

\begin{thm}
\label{thm_strong_asymp}
Let $\mu$ be an admissible probability measure on $\Gamma$ satisfying
strong uniform Ancona inequalities. For any $x,y\in \Gamma$, there
exists $C(x,y)>0$ such that, when $r$ tends to $R=R(\mu)$,
  \begin{equation*}
  \partial G_r(x,y)/\partial r \sim C(x,y) (R-r)^{-1/2}.
  \end{equation*}
\end{thm}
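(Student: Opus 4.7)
The plan is to establish the equivalent asymptotic $\eta(r) \sim \kappa (R-r)^{-1/2}$ for the quantity $\eta(r) = \sum_z H_r(e,z)$ from~\eqref{eq_eta}, and then to extend to general $(x,y)$ via the identity $(r G_r(x,y))' = \sum_z G_r(x,z) G_r(z,y)$. For $x=y=e$ this identity gives $(r G_r(e,e))' = \eta(r)$, so an asymptotic for $\eta$ translates directly into one for $\partial_r G_r(e,e)$. For general $x,y$, the sum $\sum_z G_r(x,z) G_r(z,y)$ is handled by the same technique, the constant becoming an integral of products of Martin kernels $K_{R,\xi}(x) K_{R,\xi}(y)$ (well defined thanks to strong Ancona) against a suitable boundary measure, which explains the dependence $C(x,y)$.

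The heart of the argument is an ordinary differential equation at the critical parameter. Let $\psi(r) = G_R(e,e) - G_r(e,e)$; by Guivarc'h's finiteness of $G_R(e,e)$, this is a positive decreasing quantity tending to $0$. From $(rG_r(e,e))' = \eta(r)$ one reads $-\psi'(r) = G_r'(e,e) = (\eta(r) - G_r(e,e))/r$, and since $G_r(e,e)$ remains bounded the task reduces to controlling $\eta(r)$. The central claim I would prove is the self-consistency relation
\begin{equation*}
\eta(r)\,\psi(r) \longrightarrow \kappa_0 \in (0,\infty) \quad \text{as } r \to R^-.
\end{equation*}
Granted this, one gets $-\psi(r)\psi'(r) \to \kappa_0/R$, hence $\psi(r)^2 \sim (2\kappa_0/R)(R-r)$, i.e.\ $\psi(r) \sim \sqrt{2\kappa_0/R}\,(R-r)^{1/2}$ and $\eta(r) \sim \sqrt{\kappa_0 R/2}\,(R-r)^{-1/2}$, from which the theorem follows.

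To derive the self-consistency relation, I would code each group element by its geodesic representative in the Cannon automaton $\boA$, turning $\eta(r) = \sum_x H_r(e,x)$ into a sum over finite paths in a finite directed graph. Strong uniform Ancona inequalities (Theorem~\ref{thm_ancona_strong}) provide an approximate multiplicativity: writing $x = a_1 a_2 \cdots a_n$ with tail direction $\xi \in \partial\Gamma$, the successive ratios $H_r(e, a_1\cdots a_k)/H_r(e, a_1\cdots a_{k-1})$ depend, up to an exponentially small error in $k$, only on the state of $\boA$ reached after spelling the prefix and on $\xi$. This identifies the sum $\eta(r)$ with a geometric series in a transfer operator $\boL_r$ acting on functions over a skew product of the vertices of $\boA$ with boundary directions. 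A parallel first-return decomposition of $G_r(e,e)$ itself relates $\psi(r)$ to the derivative of the leading eigenvalue of $\boL_r$ at its critical parameter, and the product $\eta(r)\psi(r)$ emerges as a finite, nonzero pressure-theoretic quantity at $r=R$.

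The main obstacle — and the reason this step comes after the Ancona inequalities rather than being routine — is that, for a generic Gromov-hyperbolic group, the Cannon automaton is \emph{not} transitive, so the Ruelle--Perron--Frobenius theorem does not apply directly, in contrast to the $\SL$ setting of \cite{gouezel_lalley}. I would follow a Calegari--Fujiwara style decomposition, singling out a component of maximal exponential growth and using the strong Ancona estimates to show that it captures the dominant contribution to both $\eta(r)$ and $\psi(r)$, so that thermodynamic formalism can be applied on this mixing piece. Once the dominant eigenvalue $\lambda(r)$ is shown to depend smoothly on $r$ with $\lambda(R) = 1$ and $\lambda'(R) \neq 0$, the self-consistency relation above — and hence Theorem~\ref{thm_strong_asymp} — follows.
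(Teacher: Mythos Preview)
Your differential-equation shortcut is not the one the paper uses, and the mechanism you propose to drive it has real gaps. The paper does not work with $\psi(r)=G_R(e,e)-G_r(e,e)$ at all; instead it controls the \emph{second} derivative of $G_r(e,e)$, i.e.\ the triple sum $\sum_{x,y} G_r(e,y)G_r(y,x)G_r(x,e)$, and proves (Proposition~\ref{prop_triple_sum}) that this equals $c(r)\eta(r)^3+O(\eta(r)^2)$ with $c(r)$ continuous up to $R$. This gives $(1/\eta^2)'\to -c$, which integrates to $\eta(r)\sim c^{-1/2}(R-r)^{-1/2}$. Your relation $\eta(r)\psi(r)\to\kappa_0$ would also do the job, but your justification---a ``first-return decomposition of $G_r(e,e)$'' relating $\psi$ to the derivative of the leading eigenvalue---does not make sense here: $G_r(e,e)=\sum_n r^n p_n(e,e)$ is a temporal sum, whereas the transfer operator $\boL_{\phi_r}$ encodes spatial sums over spheres $\Sbb_n$; there is no first-return identity linking the two.

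More seriously, your claim that the leading eigenvalue $\lambda(r)$ depends smoothly on $r$ with $\lambda'(R)\neq 0$ is false and in fact presupposes the conclusion. The pressure is analytic in the \emph{potential}, but the potential $\phi_r$ depends on $r$ through the Green function itself. The paper only gets $\norm{\phi_r-\phi_R}_{\boH^\hol}=O((R-r)^{1/3})$ via a priori estimates (Lemma~\ref{lem_C0bound}, which already uses the crude bound of Corollary~\ref{cor_crude_eta}); the eventual behavior is $\Press(\phi_r)\sim -C\sqrt{R-r}$, so $\lambda(r)-1$ is of order $\sqrt{R-r}$, not linear. There is no transversal crossing to exploit. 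Finally, your handling of non-transitivity is the wrong picture: one cannot ``single out a component of maximal exponential growth''. There can be several maximal components for $\phi_R$, each with pressure $0$, and for $r<R$ they may a priori have different pressures $\Press_i(\phi_r)$. The decisive step (Theorem~\ref{thm_Pressi_indep}) is that $\Press_i(\phi_r)/\Press(\phi_r)\to 1$ for every $i$; this is proved by showing that $\int\phi_r\,d\mu_i$ is the same for all $i$ (Proposition~\ref{prop_integral_equal}), via an ergodicity argument for a Patterson--Sullivan-type measure on $\partial\Gamma$. That is the actual content of the Calegari--Fujiwara input here, and it is not captured by the idea of one dominant mixing piece.
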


Throughout this section, we fix a measure $\mu$ satisfying the
assumptions of this theorem. Theorem~\ref{thm_ancona_strong} shows
that it is the case for finitely supported symmetric measures, but
symmetry will play no additional role in this section. We will
concentrate mainly on the proof of Theorem~\ref{thm_strong_asymp} for
$x=y=e$, since the general case will follow easily.

In a sense, the proof of Theorem~\ref{thm_strong_asymp} is
essentially done in \cite{gouezel_lalley}, but there is an important
technical difference: a (well chosen) Markov automaton for a surface
group is transitive, while there can be several components in a
general hyperbolic group. This means that, in the thermodynamic
formalism, we will have to deal with several dominating components.
This problem is solved thanks to a technique of Calegari and Fujiwara
\cite{calegari_fujiwara} and to Lemma~\ref{lem_GRbounded}. This
sketch of the argument might be sufficient for experts, but since
there are several technical subtleties we will give most details
below. There is a significant overlap with some arguments in
\cite{gouezel_lalley}, but this seems necessary to keep the argument
understandable. Two significant differences
with~\cite{gouezel_lalley} (in addition to the existence of several
dominating components in the automaton, and directly related to this
issue) are that we need some a priori estimates (proved in
Subsection~\ref{subsec_apriori}), and that for $r<R$ we will
associate to $G_r$ a measure living on the group, not on the
boundary.

\subsection{A priori estimates}
\label{subsec_apriori}

The main idea behind the proof of Theorem~\ref{thm_strong_asymp}, as
in \cite{gouezel_lalley}, is that the function $G_r(e,e)$ almost
satisfies a differential equation. By~\eqref{eq_derive}, its
derivative with respect to $r$ is essentially $\sum_x
G_r(e,x)G_r(x,e) = \sum_x H_r(e,x)$ (recall the
notation~\eqref{eq_defHr}), and its second derivative is essentially
$\sum_{x,y} G_r(e,y) G_r(y,x) G_r(x,e)$. To prove that $G_r(e,e)$
almost satisfies a differential equation, we should relate those
quantities. The next proposition gives such a (crude) relation.

\begin{prop}
\label{prop_aprioridiff}
There exists $C>0$ such that, for all $r\in [1,R)$,
  \begin{equation*}
  C^{-1} \leq \frac{ \sum_{x,y} G_r(e,y) G_r(y,x)G_r(x,e)}{ \left(\sum_x G_r(e,x)G_r(x,e)\right)^3} \leq C.
  \end{equation*}
\end{prop}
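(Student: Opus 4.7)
The plan is to compare the numerator $N := \sum_{x,y} G_r(e,y)G_r(y,x)G_r(x,e)$ to $\eta(r)^3$ using the uniform Ancona inequalities (Theorem~\ref{thm_ancona}) and the sphere bound $u_k(r) := \sum_{x\in\Sbb_k} H_r(e,x) \leq C$ of Lemma~\ref{lem_GRbounded}. For $r$ on a compact subset of $[1,R)$ both sides are bounded by continuity, so the content is for $r$ close to $R$, where $\eta(r)$ is large.

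For the upper bound, to each pair $(x,y)$ associate a point $c = c(x,y) \in \Gamma$ within bounded distance of the tree centre of $\{e,x,y\}$, so that $c$ lies close to the three geodesics from $e$ to $y$, from $y$ to $x$, and from $x$ to $e$. Applying Ancona along each and multiplying,
\[
G_r(e,y)G_r(y,x)G_r(x,e) \leq C\,H_r(e,c)H_r(c,x)H_r(c,y).
\]
Summing over $(x,y)$, relaxing to an unconstrained sum over $c$ (each pair has a unique $c$, so this only enlarges the right-hand side), and using the left-invariance $\sum_x H_r(c,x) = \eta(r)$, one obtains $N \leq C \sum_c H_r(e,c)\eta(r)^2 = C\eta(r)^3$.

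For the lower bound, three applications of~\eqref{eq_trivial_ancona} provide the reverse factorisation $G_r(e,y)G_r(y,x)G_r(x,e) \geq C^{-1} H_r(e,c)H_r(c,x)H_r(c,y)$ with $c = c(x,y)$, so
\[
N \geq C^{-1} \sum_{c} H_r(e,c) A(c), \qquad A(c) := \sum_{(x,y):\,c(x,y)=c} H_r(c,x)H_r(c,y).
\]
The key geometric claim is $A(c) \geq C^{-1} T(c)^2$, where $T(c) := \sum_{x \in S(c)} H_r(c,x)$ and $S(c)$ denotes the shadow of $c$ from $e$ (points $x$ such that $c$ lies near a geodesic from $e$ to $x$). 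In a tree this is clear by decomposing $S(c)$ into subtrees rooted at the children of $c$; in the hyperbolic setting one relies on tree approximation together with non-elementarity of $\Gamma$ (in the spirit of Lemma~\ref{lem_rallonge}, providing at least two genuinely divergent forward directions from each $c$) and Harnack to ensure that the corresponding cones carry comparable $H_r(c,\cdot)$-mass.

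Granted this, $N \geq C^{-1} \sum_c H_r(e,c) T(c)^2$, and Cauchy--Schwarz yields
\[
\sum_c H_r(e,c) T(c)^2 \geq \frac{\bigl(\sum_c H_r(e,c) T(c)\bigr)^2}{\sum_c H_r(e,c)} = \frac{\bigl(\sum_c H_r(e,c) T(c)\bigr)^2}{\eta(r)}.
\]
Ancona combined with~\eqref{eq_trivial_ancona} gives $H_r(e,c)H_r(c,x) \asymp H_r(e,x)$ whenever $c$ lies near the geodesic from $e$ to $x$, hence $\sum_c H_r(e,c) T(c) \asymp \sum_x |x|\,H_r(e,x) = \sum_{k\geq 0} k\,u_k(r)$. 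Using $u_k(r) \leq C$ and $\sum_k u_k(r) = \eta(r)$, an elementary pigeonhole argument gives $\sum_k k\,u_k(r) \geq C^{-1}\eta(r)^2$ once $\eta(r)$ is large. Assembling, $N \geq C^{-1}\eta(r)^3$. I expect the main obstacle to be the geometric step $A(c) \geq C^{-1} T(c)^2$: in a general hyperbolic group the branching of $S(c)$ can be very uneven, and one must rule out that the $H_r(c,\cdot)$-mass concentrates in a single subshadow, which would make the off-diagonal pairs contributing to $A(c)$ negligible compared to $T(c)^2$.
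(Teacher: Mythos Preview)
Your upper bound is essentially the paper's argument. For the lower bound the paper takes a more direct route: it fixes once and for all three points $u_1,u_2,u_3$ in different directions so that the shadows $B(u_i)$ are pairwise disjoint and geodesics between them pass near $e$, uses Lemma~\ref{lem_rallonge} together with Harnack to show $\sum_{v\in B(u_i)}H_r(e,v)\geq C^{-1}\eta(r)$ for each $i$, and then bounds $\prod_i H_r(e,v_i)$ by $G_r(v_1,v_2)G_r(v_2,v_3)G_r(v_3,v_1)$ via~\eqref{eq_trivial_ancona}. After the substitution $y=v_1^{-1}v_2$, $x=v_1^{-1}v_3$ (which has bounded multiplicity, since the centre of the triangle $(e,x,y)$ is near $v_1^{-1}$), this gives $\eta(r)^3\leq C N$ in one stroke.

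The concern you flag about $A(c)\geq C^{-1}T(c)^2$ is real, and your tree justification is not correct: even in a genuine tree nothing prevents all but an $\epsilon$-fraction of the $H_r(c,\cdot)$-mass of $S(c)$ from sitting in a single child subtree (think of a measure strongly biased towards one generator), and then $A(c)\approx 2\epsilon\,T(c)^2$ with no uniform lower bound on $\epsilon$. The way to fill the gap is precisely the paper's three-directions device, applied after translating $c$ to $e$: for every $c$ at least two of the fixed shadows $B(u_i)$ lie in the forward half-space, each carrying mass $\geq C^{-1}\eta(r)$ by Lemma~\ref{lem_rallonge}, and pairs drawn from two such shadows have their tripod centre at $e$; hence $A(c)\geq C^{-1}\eta(r)^2\geq C^{-1}T(c)^2$. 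But once you have $A(c)\geq C^{-1}\eta(r)^2$ you get $N\geq C^{-1}\sum_c H_r(e,c)\,\eta(r)^2=C^{-1}\eta(r)^3$ immediately, so your Cauchy--Schwarz and sphere-pigeonhole steps (which are themselves correct) become unnecessary, and the argument collapses to the paper's.
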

\begin{proof}
Consider two points $x,y$. The triangle with vertices $e,x,y$ is
thin, so there exists a point $w$ (defined uniquely up to a finite
set) which is close to each of its sides. By the Ancona inequality,
we have $G_r(e,y)=G_r(w^{-1}, w^{-1}y) \leq C G_r(w^{-1}, e) G_r(e,
w^{-1}y)$, since a geodesic segment from $w^{-1}$ to $w^{-1}y$ passes
close to $e$ by construction. Similar estimates hold along $[y,x]$
and $[x,e]$, and we obtain
  \begin{multline*}
  G_r(e,y) G_r(y,x)G_r(x,e)\\
  \leq C G_r(w^{-1}, e)G_r(e,w^{-1}) \cdot G_r(w^{-1}x, e)G_r(e, w^{-1}x) \cdot G_r(w^{-1}y, e)G_r(e, w^{-1}y).
  \end{multline*}
The points $w^{-1}, w^{-1}x$ and $w^{-1}y$ determine $x$ and $y$.
Using the notation $H_r$ and summing over $x$ and $y$, we get
  \begin{equation*}
  \sum_{x,y} G_r(e,y) G_r(y,x)G_r(x,e) \leq C \sum_{a,b,c} H_r(e,a) H_r(e,b) H_r(e,c).
  \end{equation*}
This is one of the inequalities of the proposition.

For the reverse inequality, for any $u\in \Gamma$, write $B(u)$ for
the set of points $x$ such that a geodesic from $e$ to $x$ passes
close to $u$. Lemma~\ref{lem_rallonge} ensures that, for any $z\in
\Gamma$, there exists a uniformly bounded $a$ such that $uaz\in
B(u)$. In particular, Harnack inequalities~\eqref{eq_harnack} give
$G_r(e,z) \leq C(u) G_r(e,uaz)$ (and $H_r$ satisfies the same
inequality). Hence, $\sum_{z\in \Gamma} H_r(e,z) \leq C(u) \sum_{v\in
B(u)} H_r(e,v)$. Choose now three geodesic segments $\gamma_1$,
$\gamma_2$ and $\gamma_3$ (with endpoints denoted by $u_1, u_2,
u_3$), long enough and going in three different directions (this is
possible since the group is non-elementary) so that the sets $B(u_i)$
are pairwise disjoint, and so that a geodesic from $B(u_i)$ to
$B(u_j)$ ($i\not=j$) has to pass close to $e$. We get
  \begin{equation*}
  \left(\sum_z H_r(e,z)\right)^3 \leq C(u_1, u_2, u_3)
  \sum_{v_i \in B(u_i)} H_r(e,v_1) H_r(e,v_2) H_r(e,v_3).
  \end{equation*}
By~\eqref{eq_trivial_ancona}, we have $G_r(v_1,e) G_r(e,v_2) \leq C
G_r(v_1, v_2)$, and similarly for circular permutations. This sum is
therefore bounded by $\sum G_r(v_1, v_2) G_r(v_2, v_3) G_r(v_3,
v_1)$. Let $y=v_1^{-1}v_2$ and $x=v_1^{-1} v_3$. The point close to
the three sides of a geodesic triangle with vertices $e$, $x$ and $y$
is close to $v_1^{-1}$ by construction, hence $x$ and $y$ determine
$v_1$ (and then $v_2$ and $v_3$) up to a finite number of
possibilities. We finally get
  \begin{equation*}
  \left(\sum_z H_r(e,z)\right)^3 \leq C \sum_{x,y} G_r(e,y) G_r(y,x)G_r(x,e),
  \end{equation*}
proving the other inequality of the lemma.
\end{proof}

\begin{cor}
\label{cor_crude_eta}
There exist $A\geq 0$ and $C>0$ such that, for all $r\in [1,R)$,
  \begin{equation*}
  \frac{C^{-1}}{\sqrt{A+(R-r)}} \leq \sum_{x\in \Gamma} H_r(e,x) \leq \frac{C}{\sqrt{A+(R-r)}}.
  \end{equation*}
Moreover, $A=0$ if and only if $\sum_{x\in \Gamma} H_R(e,x) =
+\infty$.
\end{cor}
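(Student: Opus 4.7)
The plan is to derive a differential inequality for $\eta(r) := \sum_x H_r(e,x)$ out of~\eqref{eq_derive} and Proposition~\ref{prop_aprioridiff}, and then integrate it by hand via the substitution $\psi(r) := \eta(r)^{-2}$. Writing~\eqref{eq_derive} as $G_r(x,y)' = r^{-1}\bigl(\sum_z G_r(x,z) G_r(z,y) - G_r(x,y)\bigr)$, differentiating $\eta(r) = \sum_x G_r(e,x) G_r(x,e)$ term by term and reindexing the two resulting triple sums (which are equal after renaming variables) gives the clean identity
\begin{equation*}
r\eta'(r) = 2 S(r) - 2\eta(r), \qquad S(r) := \sum_{x,y\in\Gamma} G_r(e,y) G_r(y,x) G_r(x,e).
\end{equation*}
Proposition~\ref{prop_aprioridiff} states that $c\,\eta(r)^3 \leq S(r) \leq C\,\eta(r)^3$. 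Because $r\mapsto G_r$ is increasing, so is $\eta$; in particular $\eta(r) \geq \eta(1) \geq G_1(e,e)^2 \geq 1$. The identity above therefore yields the unconditional upper bound $\eta'(r) \leq C'\eta(r)^3$ on $[1,R)$, together with the conditional lower bound $\eta'(r) \geq c'\eta(r)^3$ whenever $\eta(r)$ exceeds some fixed threshold $\eta_0$ (chosen large enough that the subtracted $2\eta(r)$ is absorbed by half of $2c\eta(r)^3$).

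Next, set $\psi(r) := \eta(r)^{-2}$, so $\psi'(r) = -2\eta'(r)/\eta(r)^3$. The two differential inequalities become $\psi'(r) \geq -2C'$ unconditionally, and $\psi'(r) \leq -2c'$ whenever $\eta(r) \geq \eta_0$. Since $\eta$ is monotone, it has a limit $\eta(R) \in [\eta(1),+\infty]$; I define
\begin{equation*}
A := \eta(R)^{-2} \in [0,+\infty),
\end{equation*}
with the convention $A = 0$ iff $\eta(R) = +\infty$, and extend $\psi$ continuously to $[1,R]$ by $\psi(R) = A$. Integrating $\psi' \geq -2C'$ from $r$ to $R$ yields $\psi(r) \leq A + 2C'(R-r)$, which rearranges to the lower bound $\eta(r) \geq (A + 2C'(R-r))^{-1/2}$ claimed in the corollary (up to enlarging the constant).

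For the matching upper bound on $\eta$, I split on $A$. If $A > 0$, then $\eta(r) \leq \eta(R) = A^{-1/2}$ is bounded while $1/\sqrt{A + (R-r)}$ stays bounded below on $[1,R]$, so the inequality holds after enlarging $C$. If $A = 0$, monotonicity of $\eta$ gives some $r^*\in[1,R)$ past which $\eta(r) \geq \eta_0$; integrating $\psi' \leq -2c'$ on $[r^*,R]$ against $\psi(R) = 0$ yields $\psi(r) \geq 2c'(R-r)$, hence $\eta(r) \leq (2c'(R-r))^{-1/2}$ on $[r^*,R)$, and on $[1,r^*]$ both $\eta(r)$ and $(R-r)^{-1/2}$ are bounded above and below so the inequality extends after further enlarging $C$. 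The main subtlety is that the useful lower bound on $\eta'$ only activates once $\eta$ itself is large; monotonicity of $\eta$ in $r$ makes this dichotomy clean and produces directly the two regimes ($A > 0$ versus $A = 0$) demanded by the statement.
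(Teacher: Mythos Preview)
Your proof is correct and follows the same approach as the paper: extract a differential inequality from~\eqref{eq_derive} and Proposition~\ref{prop_aprioridiff}, then integrate via the substitution $\eta^{-2}$. The paper's version is marginally slicker because it works with $F(r):=r^2\eta(r)$, for which the identity $F'(r)=2rS(r)$ carries no lower-order correction term, so $F'/F^3$ is bounded above and below unconditionally on $[1,R)$ and your case split on the size of $\eta$ (and on whether $A>0$) becomes unnecessary.
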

\begin{proof}
Let $\eta(r)=\sum_x H_r(e,x)$, and $F(r) = r^2 \eta(r)$.
By~\eqref{eq_derive},
  \begin{equation*}
  F'(r) = 2r\sum_{x,y} G_r(e,y)G_r(y,x)G_r(x,e).
  \end{equation*}
Therefore, Proposition~\eqref{prop_aprioridiff} shows that
$2F'(r)/F(r)^3 = (-1/F(r)^2)'$ is bounded from above and below.
Integrating this estimate on an interval $[r,s]$, we get
  \begin{equation*}
  C^{-1}(s-r) \leq 1/F(r)^2 - 1/F(s)^2 \leq C(s-r).
  \end{equation*}
When $s$ increases to $R$, $F(s)$ converges either to a positive
constant or to infinity. Hence, $1/F(s)^2$ converges to $A\in
[0,\infty)$. We get $A+C^{-1}(R-r) \leq 1/F(r)^2 \leq A+C(R-r)$, from
which the corollary follows.
\end{proof}
We shall see later that $A$ is in fact equal to $0$. Therefore, this
corollary gives the right order of magnitude $1/\sqrt{R-r}$ for the
function $\eta(r) = \sum_{x\in \Gamma} H_r(e,x)$. However, to obtain
Theorem~\ref{thm_strong_asymp}, we need to get \emph{asymptotics}, of
the form $\eta(r)\sim C/\sqrt{R-r}$. The strategy will be the same,
relying on the differential equation, but we will need to improve
Proposition~\ref{prop_aprioridiff}, to get convergence instead of
mere bounds. This is most conveniently done using the transfer
operator on a Markov automaton, as we will explain in the next
subsection. Before doing this, let us state a final technical lemma,
that relies on Corollary~\ref{cor_crude_eta} and will be important
later on.

\begin{lem}
\label{lem_C0bound}
Fix $a\in \Gamma$. There exists $C>0$ such that, for any $x\in
\Gamma$, for any $r\in [1,R]$,
  \begin{equation*}
  \abs{ \log\left(\frac{G_r(e,x)}{G_r(a,x)}\right) - \log\left(\frac{G_R(e,x)}{G_R(a,x)}\right)}
  \leq C \sqrt{R-r}
  \end{equation*}
and
  \begin{equation*}
  \abs{ \log\left(\frac{G_r(x,e)}{G_r(x,a)}\right) - \log\left(\frac{G_R(x,e)}{G_R(x,a)}\right)}
  \leq C \sqrt{R-r}.
  \end{equation*}
\end{lem}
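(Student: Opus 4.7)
The plan is to derive the pointwise derivative bound $|\phi_x'(r)| \leq C/\sqrt{R-r}$ for $\phi_x(r) = \log(G_r(e,x)/G_r(a,x))$, uniformly in $x \in \Gamma$ and $r \in [1, R)$; integration over $[r, R]$ then yields the first inequality of the lemma, since $\int_r^R (R-s)^{-1/2}\,ds = 2\sqrt{R-r}$. Applying~\eqref{eq_derive} to both $G_r(e,x)$ and $G_r(a,x)$,
\begin{equation*}
r\phi_x'(r) = \sum_{z \in \Gamma} G_r(z, x) \left[\frac{G_r(e, z)}{G_r(e, x)} - \frac{G_r(a, z)}{G_r(a, x)}\right].
\end{equation*}
We estimate this sum by splitting $\Gamma$ according to the position of a tripod center $w_z$ of the geodesic triangle $(e, z, x)$ (within bounded distance of each of its three sides).

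For $z$ with $d(e, w_z) \geq C_0$ (Region I, with $C_0 \gg |a|$), the pairs $\{e, a\}$ and $\{z, x\}$ fit the configuration of Definition~\ref{def_strong_ancona} with trunk length $\asymp d(e, w_z)$. Theorem~\ref{thm_ancona_strong} gives
\begin{equation*}
\left| \frac{G_r(e, z)}{G_r(e, x)} - \frac{G_r(a, z)}{G_r(a, x)} \right| \leq C e^{-\rho d(e, w_z)} \cdot \frac{G_r(a, z)}{G_r(a, x)}.
\end{equation*}
Fix $\gamma_k$ on the geodesic $[e, x]$ at distance $k$ from $e$, and consider $z$'s with $w_z = \gamma_k$. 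By Ancona (applied to geodesics $a \to z$ and $z \to x$, both of which pass close to $\gamma_k$) and by \eqref{eq_trivial_ancona}, we have $G_r(a, z) G_r(z, x) \leq C G_r(a, \gamma_k) G_r(\gamma_k, x) \, H_r(\gamma_k, z) \leq C G_r(a, x) \, H_r(\gamma_k, z)$. Thus the summand is bounded by $C e^{-\rho k} H_r(\gamma_k, z)$. Summing over $z$ with $w_z = \gamma_k$, translation invariance of $H_r$ on $\Gamma$ and Corollary~\ref{cor_crude_eta} give $\sum_{z : w_z = \gamma_k} H_r(\gamma_k, z) \leq C/\sqrt{R-r}$. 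Summing the factor $e^{-\rho k}$ over $k \geq C_0$ yields a total Region~I contribution of $O(1/\sqrt{R-r})$.

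For $z$ with $d(e, w_z) < C_0$ (Region II), the geodesic $[z, x]$ passes within bounded distance of $e$, so Theorem~\ref{thm_ancona} gives $G_r(z, x) \leq C G_r(z, e) G_r(e, x)$. Dividing, $G_r(e, z) G_r(z, x)/G_r(e, x) \leq C H_r(e, z)$; the analogous bound for the $a$-term follows by Harnack inequalities (used to compare $G_r(a, z)$ with $G_r(e, z)$ and $G_r(e, x)/G_r(a, x)$ to a constant, since $|a|$ is bounded). Summing, Region~II contributes at most $C \sum_z H_r(e, z) \leq C/\sqrt{R-r}$ by Corollary~\ref{cor_crude_eta}.

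The second inequality of the lemma follows by the identical argument applied to $\psi_x(r) = \log(G_r(x, e)/G_r(x, a))$: the derivative formula from \eqref{eq_derive} still expresses $r\psi_x'(r)$ as a sum over $z$ of a Green function weight times a bracket controlled by strong Ancona, and Definition~\ref{def_strong_ancona} is symmetric in which pair $\{e, a\}$ or $\{x, x'\}$ sits at which end of the tripod. The main obstacle is the Region~I estimate, where one must verify that the per-shell mass $\sum_{z : w_z = \gamma_k} H_r(\gamma_k, z)$ is controlled by the \emph{sharp} bound in Corollary~\ref{cor_crude_eta} (and not a cruder one), so that the exponential decay $e^{-\rho k}$ in $k$ suffices to compensate for the sum over $k$ without accumulating an additional blow-up in $1/\sqrt{R-r}$.
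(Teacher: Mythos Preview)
Your proof is correct and follows essentially the same route as the paper: bound the derivative of $r\mapsto \log(G_r(e,x)/G_r(a,x))$ by decomposing $\Gamma$ according to the position of the projection/tripod point on $[e,x]$, control the bracket via strong Ancona and the weight via Ancona plus~\eqref{eq_trivial_ancona} to reduce to $H_r(\gamma_k,\cdot)$, then sum using Corollary~\ref{cor_crude_eta} and the exponential factor $e^{-\rho k}$. The only cosmetic differences are that the paper absorbs your Region~II into the same sum (the bound $Ce^{-\rho n}$ being trivial for small $n$), and deduces the second inequality by applying the first to the reversed measure $\check\mu(g)=\mu(g^{-1})$ rather than repeating the argument.
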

\begin{proof}
The second estimate of the lemma can be deduced from the first one
applied to the measure $\check{\mu}(g)=\mu(g^{-1})$, we will
therefore concentrate on the first one.

Fix some $x\in \Gamma$. Let $f(r) = \log(G_r(e,x)/G_r(a,x))$, we will
show that its derivative is bounded in absolute value by
$C/\sqrt{A+R-r}$, where $C$ is a constant that does not depend on $x$
(of course, it may depend on $a$). By integration, this gives
$\abs{f(r)-f(R)} \leq C(\sqrt{A+R-r} - \sqrt{A})$, which is bounded
by $C \sqrt{R-r}$ as desired.

We write $f(r) = \log (rG_r(e,x)) - \log(rG_r(a,x))$. With the
formula~\eqref{eq_derive} for the derivative of $rG_r$, we get
  \begin{align*}
  f'(r)
  &= \frac{\sum_y G_r(e,y) G_r(y,x)}{r G_r(e,x)} - \frac{\sum_y G_r(a,y) G_r(y,x)}{r G_r(a,x)}
  \\&
  = \frac{1}{r} \sum_y \left(1-\frac{G_r(a,y)/G_r(e,y)}{G_r(a,x)/G_r(e,x)}\right)
  \frac{ G_r(e,y)G_r(y,x)}{G_r(e,x)}.
  \end{align*}
Consider a geodesic segment $\gamma$ from $e$ to $x$ and write
$\gamma(n)$ ($0\leq n\leq \lgth{x}$) for the point on $\gamma$ at
distance $n$ of $e$. Let $\Gamma_n$ denote the set of points $y\in
\Gamma$ whose projection on $\gamma$ is $\gamma(n)$, i.e.,
$d(y,\gamma(n))\leq d(y,\gamma(i))$ for $i\neq n$. Note that there
can be several such projections -- in this argument, the multiplicity
is not important, otherwise one can avoid it by using only the first
projection. For $y\in \Gamma_n$, the points $e,a$ and $x,y$ are in
the configuration of Theorem~\ref{thm_ancona_strong}, with a
separating distance at least $n-C$ (for some $C$ only depending on
$a$). Applying this theorem, we obtain
  \begin{equation*}
  \abs{f'(r)}\leq C\sum_{n=0}^{\lgth{x}} \sum_{y\in \Gamma_n} e^{-\rho n} \frac{ G_r(e,y)G_r(y,x)}{G_r(e,x)}.
  \end{equation*}
For $y\in\Gamma_n$, geodesics from $e$ to $y$ and from $y$ to $x$
pass close to $\gamma(n)$. Hence, Ancona inequalities give
  \begin{equation*}
  G_r(e,y) G_r(y,x)\leq C G_r(e,\gamma(n)) G_r(\gamma(n), y) G_r(y,\gamma(n)) G_r(\gamma(n), x)
  \leq C G_r(e, x) H_r(\gamma(n), y).
  \end{equation*}
Finally,
  \begin{equation*}
  \abs{f'(r)}\leq C \sum_{n=0}^{\lgth{x}} \sum_{y\in \Gamma_n} e^{-\rho n} H_r(\gamma(n), y)
  \leq C \sum_{n=0}^{\lgth{x}} e^{-\rho n} \sum_{y\in \Gamma}  H_r(e, \gamma(n)^{-1}y).
  \end{equation*}
Since $\sum_{z\in \Gamma} H_r(e,z) \leq C/\sqrt{A+R-r}$ by
Corollary~\ref{cor_crude_eta} and $e^{-\rho n}$ is summable, this
proves the lemma.
\end{proof}

\subsection{Symbolic dynamics}
\label{subsec_symbolic}

For a nice introduction to the topics of this paragraph and the next
one, see \cite{calegari_fujiwara}.

Let $S$ be a finite symmetric generating set of the group $\Gamma$. A
rooted $S$-labeled automaton (or simply automaton) is a finite
directed graph $\mathcal{A}= (V,E,s_{*})$ with distinguished vertex
$s_{*}$ (``start''), and a labeling $\alpha: E \rightarrow S$ of
edges by generators of the group.

A \emph{path} in the graph is a sequence of edges
$e_0,\dotsc,e_{m-1}$ such that the endpoint of $e_i$ is the starting
point of $e_{i+1}$. To such a path $\gamma$, one can associate a path
$\alpha(\gamma)$ in the Cayley graph of $\Gamma$ by multiplying
successively the generators read along the edges of the path. Let
$\alpha_*(\gamma)$ be the endpoint of $\alpha(\gamma)$.

\begin{definition}\label{definition:automaticStructure}
An automaton is a strongly Markov automatic structure for $\Gamma$
if:
\begin{enumerate}
\item Every vertex $v\in V$ is accessible from the start state
    $s_{*}$.
\item For every path $\gamma$, the path $\alpha (\gamma)$ is a
    geodesic path in $\Gamma$.
\item The endpoint mapping $\alpha_{*}$ induced by $\alpha$ is a
    bijection of the set of paths starting at $s_*$ onto
    $\Gamma$.
\end{enumerate}
\end{definition}

In particular, the sphere $\Sbb_k$ of $\Gamma$ is in bijection with
the set of paths of length $k$ starting from $s_*$.

Every Gromov-hyperbolic group admits such a strongly Markov automatic
structure, by a theorem of Cannon~\cite{cannon}. Let us fix once and
for all such an automaton. An infinite path in the graph determines a
semi-infinite geodesic in the group starting from $e$, and therefore
a point on the boundary at infinity. In this way, we extend
$\alpha_*$ to a map from infinite paths to $\partial \Gamma$.

A component of the automaton is a maximal subset in which any vertex
can be reached from any other vertex. If there is a single
non-trivial component, the recurrent part is transitive. This is the
case for well-chosen automata for subgroups of $\SL$, but for general
hyperbolic groups there is no such transitivity. Identifying points
belonging to the same component, one obtains a new directed graph,
the components graph, in which there is no loop. This graph encodes
how different components interact.

We will denote by $\Sigma^*$ the set of finite paths in the graph, by
$\Sigma$ the set of semi-infinite paths, and $\bSigma = \Sigma^*\cup
\Sigma$. These sets are endowed with a metric $d(\omega,\omega')=
2^{-n}$ where $n$ is the first time the paths $\omega$ and $\omega'$
differ. With this metric, $\Sigma^*$ is a dense open subset of the
compact space $\bSigma$. The map $\alpha_*$ is continuous from
$\bSigma$ to $\Gamma\cup\partial \Gamma$.

Denote by $\boH^\hol$ the space of $\hol$-H\"older continuous functions
on $\bSigma$. For $0<\hol<\hol'$, one has the following basic
inequality (which is true in any metric space):
  \begin{equation}
  \label{eq_relate_holder}
  \norm{f}_{\boH^\hol} \leq 2\norm{f}_{C^0}^{1-\hol/\hol'}
  \norm{f}_{\boH^{\hol'}}^{\hol/\hol'}.
  \end{equation}
In particular, if a sequence of functions $f_n$ converges in $C^0$
and remains bounded in $\boH^{\hol'}$, then it converges in
$\boH^\hol$.

Note that an H\"older continuous function on $\Sigma^*$ uniquely
extends to an H\"older continuous function on $\bSigma$. Finally, let
$\sigma: \bSigma \to \bSigma$ be the left shift, forgetting the first
edge of a path.

\subsection{Peripheral spectrum of transfer operators}

Since the spectral description of transfer operators is very
classical, we will only sketch the proofs in this section, referring
to \cite{parry-pollicott} for more details.

Consider a finite directed graph $\boA$, let $\bSigma$ be the set of
finite or infinite paths in $\boA$, and let $\sigma$ be the left
shift. (If one is uncomfortable with the idea of considering finite
paths in the graph, one can equivalently add a cemetery to the graph,
that can be reached from any vertex, and extend a finite path by
infinitely many steps in the cemetery.) To any real-valued H\"older
continuous function $\phi:\bSigma \to \R$ (called a
\emph{potential}), one associates the so-called \emph{transfer
operator} $\boL_\phi$, defined on the set of H\"older continuous
functions by
  \begin{equation*}
  \boL_\phi f(\omega) = \sum_{\sigma(\omega')=\omega}e^{\phi(\omega')} f(\omega'),
  \end{equation*}
where for $\omega=\emptyset$ the empty path we only consider the
non-empty preimages of $\omega$. The iterates of this operator encode
a lot of information on the Birkhoff sums $S_n \phi(\omega) =
\sum_{j=0}^{n-1} \phi(\sigma^j \omega)$ of the potential $\phi$. For
instance, one has
  \begin{equation*}
  \boL_\phi^n 1(\emptyset) = \sum e^{S_n \phi(\omega)},
  \end{equation*}
where the sum is over all paths of length $n$.

In the case of hyperbolic groups, we will be interested in the
asymptotics of such sums, since for suitable potentials $\phi_r$ they
correspond to the sum of $H_r$ over the sphere of radius $n$ in
$\Gamma$ (this is one of the quantity we want to estimate precisely
to improve on Proposition~\ref{prop_aprioridiff}). Such asymptotics
can be read from the spectrum of $\boL_\phi$, that we now describe.

The simplest situation is when the graph is \emph{topologically
mixing}, i.e., one can go from any vertex to any other vertex (one
says that the graph is recurrent) and for any $a,b\in \boA$, for any
large enough $n$, there is a path of length exactly $n$ from $a$ to
$b$. In this case, the spectral description of $\boL_\phi$ is very
simple, and is given by the following theorem (called the
Ruelle-Perron-Frobenius theorem).
\begin{thm}
\label{thm_mixingRPF}
Assume $\boA$ is topologically mixing. The operator $\boL_\phi$
acting on the space of H\"older continuous functions has a unique
eigenvalue of maximal modulus denoted by $e^{\Press(\phi)}$, the rest
of its spectrum is contained in a disk of strictly smaller radius.
Moreover, the corresponding eigenfunction $h$ (suitably normalized)
is strictly positive everywhere, and the eigenprojector is given by
$\Pi f = \left(\int f\dd\lambda\right) h$ for some probability
measure $\lambda$ whose support is the set $\Sigma$ of infinite
paths. Finally, the probability measure $h\dd\lambda$ is invariant
under $\sigma$ and ergodic.
\end{thm}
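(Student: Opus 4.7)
The plan is to prove this via the classical Birkhoff cone / Ruelle-Perron-Frobenius argument, whose ingredients split neatly into (a) constructing the conformal reference measure $\lambda$, (b) constructing the eigenfunction $h$, (c) establishing a spectral gap, and (d) deducing ergodicity.

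First I would produce $\lambda$ and the eigenvalue simultaneously. Define $\boL=\boL_\phi$. The dual operator $\boL^*$ acts on the compact convex set of probability measures on $\bSigma$; the map $\nu \mapsto \boL^*\nu / (\boL^*\nu)(\mathbf{1})$ is continuous, so Schauder--Tychonoff yields a probability measure $\lambda$ and a scalar $\beta>0$ with $\boL^*\lambda = \beta\lambda$. Writing $\Press(\phi)=\log\beta$, a standard computation using topological mixing and the bounded distortion of Birkhoff sums (if $\omega,\omega'$ agree on the first $n$ coordinates then $|S_n\phi(\omega)-S_n\phi(\omega')|\leq C\|\phi\|_{\boH^\hol} 2^{-\hol n}$) shows
\begin{equation*}
 C^{-1}\beta^n \leq \boL^n\mathbf{1}(\omega) \leq C \beta^n
\end{equation*}
uniformly in $\omega$, so $\beta=\lim_n \|\boL^n\mathbf{1}\|_\infty^{1/n}$ is intrinsic. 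One then checks that $\lambda$ charges exactly the infinite-path set $\Sigma$, because mixing forces vertices that eventually enter the cemetery to carry vanishing $\lambda$-mass.

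Next, I would build $h$. Define $h_n = \beta^{-n}\boL^n\mathbf{1}$. The bounded distortion estimate above gives a uniform $\boH^\hol$ bound on $h_n$ (this is where one uses crucially that the potential is H\"older rather than merely continuous), and the two-sided bound $C^{-1}\leq h_n\leq C$ keeps the sequence inside a compact subset of $C^0$. Applying Arzel\`a--Ascoli and Ces\`aro averaging one extracts a continuous limit $h>0$ with $\boL h=\beta h$ and $\int h\dd\lambda=1$. Consequently $\mu = h\dd\lambda$ is $\sigma$-invariant.

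The core of the theorem is the spectral gap, and I expect this to be the main obstacle. I would establish it by Birkhoff's projective contraction: on the convex cone $\boC$ of strictly positive H\"older functions with a prescribed Lipschitz-type modulus (tailored so that $\boL$ stabilises it thanks to distortion), the Hilbert projective metric has finite diameter when restricted to the image $\boL^N\boC$ for $N$ large, using topological mixing to ensure that any two positive H\"older functions are comparable after $N$ steps. Birkhoff's theorem then yields a strict contraction factor $\kappa<1$ in the projective metric. Translating this back to the linear setting and using that $h$ is the unique positive eigendirection, one obtains
\begin{equation*}
 \bigl\| \beta^{-n}\boL^n f - \Pi f \bigr\|_{\boH^\hol} \leq C \kappa^n \|f\|_{\boH^\hol}, \qquad \Pi f=\Bigl(\int f\dd\lambda\Bigr) h,
\end{equation*}
which gives the simplicity of $\beta$, the rank-one eigenprojector, and the spectral gap (so the remaining spectrum lies in a disk of radius $\leq\kappa\beta<\beta$). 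Uniqueness of $h$ and $\lambda$ falls out of the same contraction argument.

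Finally, ergodicity of $\mu=h\dd\lambda$ is immediate from the spectral gap: if $f\in L^2(\mu)$ is $\sigma$-invariant, then $f h$ is fixed (up to the factor $\beta$) by $\boL$; uniqueness of the leading eigendirection forces $fh\in\R h$, so $f$ is $\mu$-a.e.\ constant. The only delicate point throughout is that the H\"older hypothesis must be used exactly once, to secure the distortion estimate on Birkhoff sums that underlies both the a priori bounds on $h_n$ and the stability of the cone under $\boL$.
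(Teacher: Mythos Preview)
Your sketch is correct and is precisely the classical Ruelle--Perron--Frobenius argument. Note, however, that the paper does not give its own proof of this theorem: it simply cites Theorem~2.2 of Parry--Pollicott (with the remark that the proof there, written for $\Sigma$, adapts to $\bSigma$). Your outline --- Schauder fixed point for the eigenmeasure $\lambda$, bounded distortion to control $\beta^{-n}\boL^n\mathbf{1}$, Birkhoff cone contraction for the spectral gap --- is exactly the standard proof found in that reference, so there is nothing to compare.
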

In other words, one has
  \begin{equation*}
  \norm{\boL_\phi ^n f - e^{n\Press(\phi)}\left(\int f\dd\lambda\right) h}
  \leq C \norm{f} e^{-n\epsilon}e^{n\Press(\phi)},
  \end{equation*}
for some $C>0$ and $\epsilon>0$. This is Theorem~2.2 in
\cite{parry-pollicott} (the statement there is only given on
$\Sigma$, but the proofs readily adapt to $\bSigma$). The real number
$\Press(\phi)$ is called the \emph{pressure} of the potential $\phi$.

Assume now that $\boA$ is recurrent, but not mixing: there is a
minimal period $p>1$ such that any path from a vertex to itself has
length $np$ for some integer $n$. In this case, the set $V$ of
vertices of $\boA$ is a disjoint union $\bigsqcup_{j=0}^{p-1} V_j$,
where for any $j\in \Z/p\Z$ an outgoing edge of $V_j$ is an ingoing
edge of $V_{j+1}$ (we call this decomposition a cyclic decomposition
of $V$). Denoting by $\bSigma_j$ the set of paths beginning from a
vertex in $V_j$ and the empty path, then $\sigma$ maps $\bSigma_j$ to
$\bSigma_{j+1}$. Moreover, the restriction of $\sigma^p$ to any
$\bSigma_j$ is a topologically mixing subshift of finite type, to
which Theorem~\ref{thm_mixingRPF} applies. This readily implies that
the eigenvalues of maximal modulus of $\boL_\phi$ are of the form
$e^{2\ic k\pi/p}e^{\Press(\phi)}$ for some real number
$\Press(\phi)$, they are all simple, and the rest of the spectrum of
$\boL_\phi$ is contained in a disk of strictly smaller radius. More
specifically, there exist positive functions $h_j$ on $\bSigma_j$ and
probability measures $\lambda_j$ with support equal to $\Sigma_j$
such that
  \begin{equation*}
  \norm{\boL_\phi^n f -e^{n\Press(\phi)} \sum_{j=0}^{p-1}\left(\int f\dd\lambda_{(j-n \bmod p)}\right) h_j}
  \leq C \norm{f} e^{-n\epsilon}e^{n\Press(\phi)}.
  \end{equation*}

Assume finally that $\boA$ is not even recurrent. In this case, one
can associate to any component $\boC$ the restriction of $\phi$ to
paths staying in $\boC$ and the corresponding transfer operator
$\boL_{\boC}$. The previous description applies to $\boL_{\boC}$: it
has finitely many eigenvalues of maximal modulus
$e^{\Press_{\boC}(\phi)}$, they are of the form $e^{2\ic
k\pi/p_{\boC}} e^{\Press_{\boC}(\phi)}$ for some $k\in \Z/p_{\boC}
\Z$, and $\boL_{\boC}$ has a spectral gap. Let $\Press(\phi)$ be the
maximum of $\Press_{\boC}(\phi)$ over all components. We call a
component maximal if $\Press_{\boC}(\phi) = \Press(\phi)$. The
dominating terms in $\boL_\phi^n$ come from the maximal components.
We will say that $\phi$ is \emph{semisimple} if there is no directed
path from a maximal component to a different maximal component.
Otherwise, the eigenvalue $e^{\Press(\phi)}$ has non-trivial Jordan
blocks, which makes the precise spectral description more cumbersome.

\begin{lem}
\label{lem_jordan}
Consider some edge $e_0$, and let $k>0$ be such that there is a path
from $e_0$ to successively $k$ different maximal components. For any
nonnegative function $f$ with $f\geq 1$ on the set of paths starting
with $e_0$, one has $\boL_\phi^n f(\emptyset) \geq C n^{k-1}
e^{n\Press(\phi)}$.
\end{lem}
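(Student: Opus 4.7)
The strategy is to produce the $n^{k-1}$ factor by hand, restricting the sum defining
\[
\boL_\phi^n f(\emptyset) = \sum_{|\omega|=n} e^{S_n\phi(\omega)} f(\omega)
\]
to paths that start with the edge $e_0$ and traverse the $k$ maximal components $\boC_1,\dotsc,\boC_k$ in turn. The polynomial factor then appears from a stars-and-bars count of how to split the total length $n$ between the $k$ components.

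I would fix a path in the graph witnessing the hypothesis of the lemma: an initial segment $\gamma_0$ whose first edge is $e_0$ and terminating at some vertex $v_1\in \boC_1$, and for each $i=1,\dotsc,k-1$ a transition segment $\gamma_i$ going from some vertex $u_i\in\boC_i$ to some vertex $v_{i+1}\in\boC_{i+1}$. Writing $L$ for the total length of these fixed segments, any concatenation
\[
\gamma_0\cdot\omega_1\cdot\gamma_1\cdot\omega_2\dotsm\gamma_{k-1}\cdot\omega_k,
\]
where each $\omega_i$ is a path inside $\boC_i$ of length $m_i$ from $v_i$ to $u_i$ (with $u_k$ unconstrained), has total length $n=L+m_1+\dotsb+m_k$, starts with $e_0$, and has Birkhoff sum equal to a constant (depending only on the fixed $\gamma_i$) plus $\sum_i S_{m_i}\phi(\omega_i)$. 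Using $f\geq 1$ on these paths, this gives
\[
\boL_\phi^n f(\emptyset) \geq c_0 \sum_{m_1+\dotsb+m_k=n-L}\prod_{i=1}^k \Bigl(\sum_{\omega_i} e^{S_{m_i}\phi(\omega_i)}\Bigr).
\]

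For each $i$, the inner sum is a matrix coefficient of the iterated transfer operator attached to the recurrent component $\boC_i$. By the periodic Ruelle-Perron-Frobenius description recalled before the statement, this sum is bounded below by a positive constant times $e^{m_i\Press_{\boC_i}(\phi)}=e^{m_i\Press(\phi)}$ (using maximality of $\boC_i$), provided $m_i$ is sufficiently large and lies in the correct residue class modulo the period $p_{\boC_i}$. Each admissible tuple $(m_1,\dotsc,m_k)$ then contributes at least $c\, e^{n\Press(\phi)}$ for some $c>0$ independent of $n$. A standard stars-and-bars estimate produces at least $c' n^{k-1}$ admissible tuples for large $n$ in an appropriate residue class modulo $\mathrm{lcm}(p_{\boC_1},\dotsc,p_{\boC_k})$, yielding the required $\boL_\phi^n f(\emptyset)\geq Cn^{k-1}e^{n\Press(\phi)}$.

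The main technical annoyance is the periodicity bookkeeping needed to cover every $n$ rather than a single residue class; this is handled by allowing several alternative transition paths $\gamma_i$ of consecutive lengths (possible because each recurrent component $\boC_i$ can absorb waiting time of any sufficiently large length in any residue class). No conceptual difficulty is expected beyond this management of finitely many residue classes.
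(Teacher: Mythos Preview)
Your argument is an explicit unpacking of the paper's proof, which dismisses the lemma (jointly with the subsequent theorem) as ``basic linear algebra'' via the block-triangular, ``Jordan-blocks like'' decomposition of $\boL_\phi$ along the acyclic components graph; iterating that block structure produces exactly your convolution sum, and the stars-and-bars count is how the polynomial factor $n^{k-1}$ arises from a chain of $k$ equal-pressure diagonal blocks. Two minor slips to clean up: the Birkhoff sum of a concatenated path equals the sum of the pieces only up to a bounded additive error (because $\phi$ is H\"older and depends on the future of the path), which is harmless for a lower bound; and a recurrent component of period $p_i>1$ can only absorb loop lengths divisible by $p_i$, so your parenthetical fix is misstated---but since the lemma is invoked in the paper solely to conclude that $\boL_{\phi_R}^n 1_{[E_*]}(\emptyset)\to\infty$, the bound along a single arithmetic progression already suffices and the periodicity bookkeeping can simply be dropped.
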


In the semisimple case, the asymptotics of $\boL_\phi^n$ can be
described as follows.
\begin{thm}
\label{thm_spectral_desc}
Assume that $\phi$ is semisimple. Denote by $\boC_1,\dotsc, \boC_I$
the maximal components, with corresponding period $p_i$, and consider
for each $i$ a cyclic decomposition $\boC_i=\bigsqcup_{j\in \Z/p_i
\Z} \boC_{i,j}$. There exist functions $h_{i,j}$ and measures
$\lambda_{i,j}$ with $\int h_{i,j} \dd\lambda_{i,j}=1$ such that
  \begin{equation*}
  \norm{\boL_\phi^n f -e^{n\Press(\phi)} \sum_{i=1}^I \sum_{j=0}^{p_i-1}\left(\int f\dd\lambda_{i, (j-n \bmod p_i)}\right) h_{i,j}}
  \leq C \norm{f} e^{-n\epsilon}e^{n\Press(\phi)}.
  \end{equation*}
The probability measures $\dd\mu_i = \frac{1}{p_i}\sum_{j=0}^{p_i-1}
h_{i,j} \dd\lambda_{i,j}$ are invariant under $\sigma$ and ergodic.

Denote by $\boC_{\to,i,j}$ the set of edges from which one can reach
$\boC_{i,j}$ with a path of length in $p_i\N$, and by
$\boC_{i,j,\to}$ the set of edges that can be reached from
$\boC_{i,j}$ by a path of length in $p_i\N$. The function $h_{i,j}$
is bounded from below on paths beginning by an edge in
$\boC_{i,j,\to}$ (and the empty path) and vanishes elsewhere. The
support of the measure $\lambda_{i,j}$ is the set of infinite paths
beginning in $\boC_{\to,i,j}$ with infinitely many coordinates in
$\boC_{i}$.
\end{thm}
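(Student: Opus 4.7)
The plan is to reduce to the recurrent case from the preceding discussion, applied separately to each maximal component $\boC_i$, and then to assemble the resulting local eigendata into global objects on $\bSigma$. The semisimplicity hypothesis enters at exactly one spot: it excludes Jordan blocks at the peripheral eigenvalues of $\boL_\phi$, which would otherwise produce the polynomial growth $n^{k-1}$ isolated in Lemma~\ref{lem_jordan}. All estimates rest on the quasi-compactness of $\boL_\phi$ on $\boH^{\hol}$, inherited from the corresponding property on each component.

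First, for each maximal $\boC_i$ the restriction $\boL_{\boC_i}$ falls under the recurrent periodic case already treated. This produces, for each $j\in\Z/p_i\Z$, a strictly positive H\"older eigenfunction $h_{i,j}^{\text{loc}}$ on paths starting in $\boC_{i,j}$ and staying in $\boC_i$, a probability eigenmeasure $\lambda_{i,j}^{\text{loc}}$ on the same set with $\int h_{i,j}^{\text{loc}}\dd\lambda_{i,j}^{\text{loc}}=1$, cyclic relations $\boL_{\boC_i}h_{i,j}^{\text{loc}}=e^{\Press(\phi)}h_{i,j-1}^{\text{loc}}$ and its adjoint analogue, and a uniform spectral gap strictly below $e^{\Press(\phi)}$. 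I then globalize: $\lambda_{i,j}$ is defined on cylinders starting in $\boC_{\to,i,j}$ by summing the weights of all backward continuations reaching $\boC_{i,j}$ against $\lambda_{i,j}^{\text{loc}}$ (the sum converges geometrically thanks to the local spectral gap), and $h_{i,j}$ is defined as $\lim_n e^{-np_i\Press(\phi)}\boL_\phi^{np_i}H_{i,j}$ for a H\"older cutoff $H_{i,j}$ that agrees with $h_{i,j}^{\text{loc}}$ on $\boC_i$ and is supported downstream of $\boC_{i,j}$. In the semisimple case, paths contributing to these constructions traverse at most one maximal component, so the local spectral gap alone ensures convergence and H\"older regularity. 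By construction, the resulting pair has the claimed supports and satisfies $\boL_\phi h_{i,j}=e^{\Press(\phi)}h_{i,j-1}$ and $\boL_\phi^{*}\lambda_{i,j}=e^{\Press(\phi)}\lambda_{i,j+1}$.

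The peripheral spectrum of $\boL_\phi$ on $\boH^{\hol}$ then consists of the eigenvalues $e^{2\ic k\pi/p_i}e^{\Press(\phi)}$, with eigenfunctions $\sum_j e^{2\ic jk\pi/p_i}h_{i,j}$ and eigenfunctionals $f\mapsto\sum_j e^{-2\ic jk\pi/p_i}\int f\dd\lambda_{i,j}$; these eigenvalues are all semisimple by hypothesis, and the rest of the spectrum has radius at most $e^{\Press(\phi)-\epsilon}$ for some $\epsilon>0$ (inherited from the spectral gaps on each maximal component and from the strict inequality $\Press_{\boC}(\phi)<\Press(\phi)$ for non-maximal $\boC$). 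Performing an inverse discrete Fourier transform in $k$ on the dominant part of the spectral decomposition of $\boL_\phi^n$ reorganizes the terms into the formula of the theorem. Invariance of $\mu_i=p_i^{-1}\sum_j h_{i,j}\dd\lambda_{i,j}$ under $\sigma$ is a formal consequence of the paired eigenvalue equations, and its ergodicity reduces, via the observation that $\mu_i$-almost every path eventually stays in $\boC_i$, to the ergodicity of the corresponding topologically mixing local system. I expect the main technical obstacle to be the careful verification that semisimplicity excludes Jordan blocks in the peripheral spectrum; this is the transfer-operator analogue of the classical Perron--Frobenius statement for reducible nonnegative matrices, and handling it correctly is what controls the block-triangular structure of $\boL_\phi$ along the components graph.
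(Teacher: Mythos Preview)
Your proposal is correct and follows essentially the same strategy as the paper: reduce to the recurrent case on each maximal component and then assemble the local spectral data using the block-triangular structure of $\boL_\phi$ along the components graph, with semisimplicity ruling out Jordan blocks at the peripheral eigenvalues. The paper's proof is terser---it simply invokes the block decomposition as ``basic linear algebra'' and defers the support statements to \cite[Theorem 6.1]{gouezel_lalley}---whereas you spell out the globalization of $h_{i,j}$ and $\lambda_{i,j}$ explicitly via limits and sums; but the underlying mechanism is the same.
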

\begin{proof}
The lemma and the theorem are basic linear algebra once
Theorem~\ref{thm_mixingRPF} is given. Indeed, one can decompose
$\boL$ into a sum of operators corresponding to edges in the
components graph. Since there is no loop in this graph, this is a
Jordan-blocks like decomposition, which readily gives the dominating
spectrum of $\boL$ from the dominating spectrum on each component.
The only nontrivial assertion is on the support of $h_{i,j}$ and
$\lambda_{i,j}$ in the theorem. When there is only one non-trivial
component and this component is mixing, the argument is given
in~\cite[Theorem 6.1]{gouezel_lalley}. It easily extends to the
general case.
\end{proof}

We will need the following simple lemma later on:
\begin{lem}
\label{lem_absolute_continuity}
Under the assumptions of the above theorem, let $\beta_i = \sum_j
\lambda_{i,j}$. Then $\sigma_* \beta_i$ is absolutely continuous with
respect to $\beta_i$.
\end{lem}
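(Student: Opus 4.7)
The plan is to show that $\beta_i$ is a left eigenmeasure of the transfer operator and then combine this with the elementary identity relating $\boL_\phi(F\circ \sigma)$ and $F \cdot \boL_\phi 1$.

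First, I would identify the action of $\boL_\phi^*$ on the family $(\lambda_{i,j})_{j \in \Z/p_i\Z}$. Starting from the asymptotic expansion of Theorem~\ref{thm_spectral_desc} and writing $\boL_\phi^n f = \boL_\phi^{n-1}(\boL_\phi f)$, one gets two expansions of the same quantity:
\begin{equation*}
e^{n\Press(\phi)} \sum_{i,j} \left(\int f \dd\lambda_{i,j-n}\right) h_{i,j} \quad \text{and} \quad e^{(n-1)\Press(\phi)} \sum_{i,j} \left(\int \boL_\phi f \dd\lambda_{i,j-n+1}\right) h_{i,j},
\end{equation*}
modulo an error which is exponentially smaller. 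Since the $h_{i,j}$ have disjoint supports (by the last sentence of Theorem~\ref{thm_spectral_desc}, they are supported on the distinct cyclic sectors $\boC_{i,j,\to}$), matching coefficients of $h_{i,j}$ yields $\int \boL_\phi f \dd\lambda_{i,k+1} = e^{\Press(\phi)} \int f \dd\lambda_{i,k}$ for every H\"older $f$, i.e.\ $\boL_\phi^* \lambda_{i,k+1} = e^{\Press(\phi)} \lambda_{i,k}$ (indices in $\Z/p_i\Z$). Summing over $k$, the eigenvalue relation
\begin{equation*}
\boL_\phi^* \beta_i = e^{\Press(\phi)} \beta_i
\end{equation*}
follows.

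Next, I would invoke the elementary identity $\boL_\phi(F\circ \sigma)(\omega) = F(\omega) \boL_\phi 1 (\omega)$, which holds for every continuous $F$ on $\bSigma$: this is immediate from the definition, since $F\circ \sigma$ takes the common value $F(\omega)$ at every preimage $\omega'$ of $\omega$ under $\sigma$. Combining the two facts, for every continuous $F$,
\begin{equation*}
\int F \dd(\sigma_* \beta_i) = \int (F \circ \sigma) \dd\beta_i = e^{-\Press(\phi)} \int \boL_\phi(F\circ \sigma)\dd\beta_i = e^{-\Press(\phi)} \int F \cdot \boL_\phi 1 \dd\beta_i.
\end{equation*}
Hence $\sigma_* \beta_i = e^{-\Press(\phi)}(\boL_\phi 1)\, \beta_i$, with bounded Radon--Nikodym density (the sum defining $\boL_\phi 1$ has finitely many terms and $\phi$ is H\"older, hence bounded). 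In particular $\sigma_* \beta_i \ll \beta_i$, as claimed.

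No step is really an obstacle; the only mild subtlety is justifying the identification $\boL_\phi^* \lambda_{i,k+1} = e^{\Press(\phi)} \lambda_{i,k}$ from Theorem~\ref{thm_spectral_desc}, which relies on the disjointness of the supports of the eigenfunctions $h_{i,j}$ stated at the end of that theorem.
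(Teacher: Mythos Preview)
Your proof is correct and follows the same core strategy as the paper: establish the eigenmeasure relation $\boL_\phi^* \beta_i = e^{\Press(\phi)} \beta_i$, then deduce a bounded Radon--Nikodym density. One small imprecision: the supports of the $h_{i,j}$ are not literally disjoint (they all contain the empty path, and the sets $\boC_{i,j,\to}$ can overlap outside the maximal components), but they are linearly independent --- restricting to paths that stay forever in a single $\boC_{i,j}$ isolates each $h_{i,j}$ --- and linear independence is all your matching argument needs. Your concluding step is in fact slightly cleaner than the paper's: where the paper checks $\beta_i(\sigma^{-1}[\omega_1,\dotsc,\omega_n]) \leq C\beta_i([\omega_1,\dotsc,\omega_n])$ cylinder by cylinder, your use of the identity $\boL_\phi(F\circ\sigma) = F\cdot \boL_\phi 1$ yields the explicit density $\dd(\sigma_*\beta_i)/\dd\beta_i = e^{-\Press(\phi)}\boL_\phi 1$ in one stroke.
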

\begin{proof}
The measures $\lambda_{i,j}$ are constructed as eigenmeasures of the
operator $(\boL_\phi^*)^{p_i}$. More precisely, they satisfy
$\boL_\phi^* \lambda_{i,j} = e^{\Press(\phi)} \lambda_{i,(j-1 \bmod
p_i)}$. In particular, $\boL_{\phi}^* \beta_i = e^{\Press(\phi)}
\beta_i$.

Consider a cylinder $[\omega_0,\dotsc, \omega_n]$, i.e., the set of
paths that start with those symbols. The function $\boL_\phi
1_{[\omega_0,\dotsc, \omega_n]}$ is uniformly bounded on the image of
this cylinder under $\sigma$, i.e., $[\omega_1,\dotsc, \omega_n]$,
and it vanishes elsewhere. Hence,
  \begin{equation*}
  \beta_i([\omega_0,\dotsc, \omega_{n}])
  = e^{-\Press(\phi)} \boL_\phi^* \beta_i(1_{[\omega_0,\dotsc, \omega_{n}]}
  = e^{-\Press(\phi)} \beta_i(\boL_\phi 1_{[\omega_0,\dotsc, \omega_n]})
  \leq C \beta_i([\omega_1,\dotsc, \omega_n]).
  \end{equation*}
Since $\sigma^{-1}([\omega_1,\dotsc, \omega_n])$ is a finite union of
cylinders of the form $[\omega_0,\dotsc, \omega_{n}]$, we obtain
$\beta_i(\sigma^{-1}[\omega_1,\dotsc, \omega_n]) \leq C
\beta_i([\omega_1,\dotsc, \omega_n])$. As cylinders generate the
topology, it follows that $\beta_i(\sigma^{-1}A) \leq C\beta_i(A)$
for any measurable set $A$.
\end{proof}

Finally, we will need to describe what happens under perturbations of
the potential.

\begin{prop}
\label{prop_spectral_perturb}
Let $\phi\in \boH^\hol$ be a semisimple H\"older potential, with
maximal components $\boC_1,\dotsc,\boC_I$ and spectral description as
in Theorem~\ref{thm_spectral_desc}. There exist $\epsilon>0$ and
$C>0$ such that, for any $\psi$ which is small enough in $\boH^\hol$,
there exist functions $h_{i,j}^\psi$ and measures
$\lambda_{i,j}^\psi$ (with the same support as, respectively,
$h_{i,j}$ and $\lambda_{i,j}$) and numbers $\Press_i(\phi+\psi)$ with
  \begin{equation*}
  \norm{\boL_{\phi+\psi}^n f - \sum_{i=1}^I e^{n\Press_i(\phi+\psi)}
  \sum_{j=0}^{p_i-1}\left(\int f\dd\lambda^\psi_{i, (j-n \bmod p_i)}\right) h^\psi_{i,j}}
  \leq C \norm{f} e^{-n\epsilon}e^{n\Press(\phi)}.
  \end{equation*}
The maps $\psi\mapsto \Press_i(\phi+\psi)$, $\psi\mapsto
h_{i,j}^\psi$ and $\psi\mapsto \lambda_{i,j}^\psi$ are real analytic
from a small ball around $0$ in $\boH^\hol$ to, respectively, $\R$,
$\boH^\hol$ and the dual of $\boH^\hol$. Finally,
  \begin{equation*}
  \Press_i(\phi+\psi) = \Press(\phi) + \int \psi \dd\mu_i + O(\norm{\psi}^2_{\boH^\hol}),
  \end{equation*}
where $\dd\mu_i = \frac{1}{p_i}\sum_{j=0}^{p_i-1} h_{i,j}
\dd\lambda_{i,j}$.
\end{prop}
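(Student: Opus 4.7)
The plan is to apply Kato-style analytic perturbation theory to the peripheral spectrum of $\boL_\phi$, and then to use the block upper-triangular structure coming from the components graph to disentangle the $I$ pressures $\Press_i(\phi+\psi)$. First, the map $\psi\mapsto\boL_{\phi+\psi}$ is real-analytic from $\boH^\hol$ to $\mathcal{L}(\boH^\hol)$: indeed $\boL_{\phi+\psi}f=\boL_\phi(e^\psi f)$, and since $\boH^\hol$ is a Banach algebra stable under the exponential, the Taylor expansion $\sum_{n\geq 0}\boL_\phi\circ M_{\psi^n/n!}$ converges in operator norm (with $M_g$ denoting multiplication by $g$). By the semisimplicity assumption together with Theorem~\ref{thm_spectral_desc}, the peripheral spectrum of $\boL_\phi$ consists of semisimple eigenvalues of modulus $e^{\Press(\phi)}$, isolated from the rest by a gap. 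Enclosing it by a contour $C\subset\C$ disjoint from the remainder, the Riesz projector
\begin{equation*}
\Pi_\psi=\frac{1}{2\pi\ic}\oint_C(z-\boL_{\phi+\psi})^{-1}\dd z
\end{equation*}
is analytic in $\psi$ and has constant finite rank $\sum_i p_i$ for $\psi$ small, and the restriction of $\boL_{\phi+\psi}$ to its image is a finite-dimensional analytic family of operators.

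To identify the peripheral eigenvalues as $e^{2\ic k\pi/p_i}e^{\Press_i(\phi+\psi)}$ and to produce $h^\psi_{i,j}$ and $\lambda^\psi_{i,j}$ componentwise, I would exploit the components graph. Since it contains no directed loops, the edges may be ordered so that $\boL_{\phi+\psi}$ is block upper-triangular with diagonal blocks $\boL_{(\phi+\psi)|_{\boC}}$ on each component $\boC$. Semisimplicity means there is no directed path between distinct maximal components, so the peripheral eigenvalues arise only from the diagonal blocks on the maximal components, without Jordan coupling between them. Applying Theorem~\ref{thm_mixingRPF} together with its standard analytic perturbation version to the mixing iterate $\sigma^{p_i}$ on each cyclic piece of $\boC_i$ yields a simple dominant eigenvalue $e^{p_i\Press_i(\phi+\psi)}$ together with positive eigenfunctions and eigenmeasures, all analytic in $\psi$. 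Their lifts $h^\psi_{i,j},\lambda^\psi_{i,j}$ to the full space are determined by solving a finite upper-triangular linear system; this is possible because the spectral radius of each diagonal block on a non-maximal component stays strictly below $e^{\Press_i(\phi+\psi)}$ for small $\psi$, so that $e^{\Press_i(\phi+\psi)}-\boL_{(\phi+\psi)|_{\boC}}$ is invertible for such $\boC$. Continuity in $C^0$ of these lifts, combined with the support statements of Theorem~\ref{thm_spectral_desc} at $\psi=0$, gives the claimed supports for small $\psi$.

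Finally, the first-order expansion is the standard perturbation-theoretic computation. Setting $h_i\coloneqq\sum_j h_{i,j}$ and $\lambda_i\coloneqq\sum_j\lambda_{i,j}$, the cyclic shifts cancel after summation, giving $\boL_\phi h_i=e^{\Press(\phi)}h_i$, $\boL_\phi^*\lambda_i=e^{\Press(\phi)}\lambda_i$, and $\int h_i\,d\lambda_i=p_i$. Differentiating $\boL_{\phi+t\delta\psi}h_i^{t\delta\psi}=e^{\Press_i(\phi+t\delta\psi)}h_i^{t\delta\psi}$ at $t=0$, integrating against $\lambda_i$, and using $(d/dt)\boL_{\phi+t\delta\psi}|_{t=0}g=\boL_\phi(\delta\psi\cdot g)$, the derivatives of the eigenfunction drop out thanks to the dual eigenvalue equation, leaving
\begin{equation*}
\Press_i'(\phi)(\delta\psi)\cdot p_i\cdot e^{\Press(\phi)}=\int\boL_\phi(\delta\psi\cdot h_i)\,d\lambda_i=e^{\Press(\phi)}\int\delta\psi\,h_i\,d\lambda_i=p_i\,e^{\Press(\phi)}\int\delta\psi\,d\mu_i,
\end{equation*}
so $\Press_i'(\phi)(\delta\psi)=\int\delta\psi\,d\mu_i$ as required, and the $O(\norm{\psi}^2_{\boH^\hol})$ remainder follows from real-analyticity. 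The main technical hurdle is not the perturbation theory itself but the bookkeeping for the upper-triangular lift when several maximal components coexist and contribute eigenvalues of the same modulus that split only under perturbation.
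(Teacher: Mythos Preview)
Your proof is correct and follows essentially the same route as the paper. Both arguments reduce to the mixing case via Kato perturbation theory for simple isolated eigenvalues, handle periodicity by passing to $\sigma^{p_i}$, and then assemble the general semisimple case from the component blocks; the paper simply cites \cite[Proposition 4.10]{parry-pollicott} for the first-order expansion of the pressure where you carry out the differentiation explicitly, and it dismisses your ``upper-triangular lift'' as basic linear algebra (as in the proof of Theorem~\ref{thm_spectral_desc}) rather than spelling it out.
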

\begin{proof}
Let us first assume that the shift is topologically mixing. In this
case, the dominating eigenvalue $e^{\Press(\phi)}$ of $\boL_\phi$ is
simple. Simple isolated eigenvalues and the corresponding
eigenprojectors and eigenfunctions depend in an analytic way on the
operator, by classical perturbation theory \cite{kato_pe}. Moreover,
by semicontinuity of the spectrum, the perturbed operators
$\boL_{\phi+\psi}$ also have a spectral gap, uniformly in $\psi$
close enough to $0$. One gets
  \begin{equation*}
  \boL_{\phi+\psi}^n f = e^{n\Press(\phi+ \psi)}\left(\int f \dd\lambda^\psi\right) h^\psi
  + O(e^{-n\epsilon} e^{n\Press(\phi)})
  \end{equation*}
for some $\Press(\phi+\psi)$, $\lambda^\psi$ and $h^\psi$ that depend
analytically on $\psi$. This almost completes the proof of the
theorem in this case, it only remains to show that $\Press(\phi+\psi)
= \Press(\phi) + \int \psi h \dd\lambda + O(\norm{\psi}^2)$. By
analyticity, it is sufficient to show that the derivative of the
pressure at $0$ is given by the integral with respect to the measure
$h\dd\lambda$. This is \cite[Proposition 4.10]{parry-pollicott}.

The topologically transitive case readily reduces to the mixing case
by considering $\sigma^p$ where $p$ is the period.

In the general case, one obtains different pressures
$\Press_i(\phi+\psi)$ on each component $\boC_i$. On other components
that were not maximal for $\phi$, the pressure of $\phi+\psi$ remains
bounded away from $\Press(\phi)$. It follows that the maximal
components of $\phi+\psi$ are contained in those of $\phi$ if $\psi$
is small enough. In particular, $\phi+\psi$ is semisimple, and
$\Press(\phi+\psi) = \max \Press_i(\phi+\psi)$. Finally, the spectral
description of $\boL_{\phi+\psi}$ follows from the description on
each component $\boC_i$ separately.
\end{proof}

\subsection{Transfer operators in hyperbolic groups}

Let $\Gamma$ be a non-elementary Gromov-hyperbolic group, and $\mu$ a
probability measure satisfying strong Ancona inequalities. Consider a
strongly Markov automatic structure for $\Gamma$, given by a directed
graph $\mathcal{A}= (V,E,s_{*})$ and a labeling $\alpha: E
\rightarrow S$ of edges by generators of the group. We will use
freely the notations of Paragraph~\ref{subsec_symbolic}.

For $r\in [1,R]$, let us define a potential $\phi_r$ on the set
$\Sigma^*$ of finite paths in the automaton by
  \begin{equation*}
  \phi_r(\omega) = \log\left(\frac{H_r(e, \alpha_*(\omega))}{H_r(e, \alpha_*(\sigma\omega))}\right).
  \end{equation*}
Consider a path $\omega=\omega_0\dotsm \omega_{n-1}$ of length $n$,
then
  \begin{equation*}
  e^{S_n \phi_r(\omega)} = \frac{H_r(e, \alpha_*(\omega_0\dotsm \omega_{n-1}))}{H_r(e, e)}.
  \end{equation*}
Let $E_*$ be the set of edges starting from the vertex $s_*$ of the
graph $\mathcal{A}$, and let $1_{[E_*]}$ be the function equal to $1$
on paths starting with an edge in $E_*$, and $0$ elsewhere. Using the
language of transfer operators, we have
  \begin{align*}
  H_r(e,e) \cdot \boL^n_{\phi_r}1_{[E_*]}(\emptyset)
  &= H_r(e,e) \sum_{\omega=\omega_0\dotsm\omega_{n-1}} e^{S_n \phi_r(\omega)} 1(\omega_0\in E_*)
  \\&= \sum H_r(e, \alpha_*(\omega_0\dotsm \omega_{n-1}))1(\omega_0 \in E_*).
  \end{align*}
Since $\alpha_*$ induces a bijection between the paths of length $n$
starting from $s_*$ and the sphere $\Sbb_n$ of radius $n$ in
$\Gamma$, we obtain
  \begin{equation*}
  \sum_{x\in \Sbb_n} H_r(e,x) = H_r(e,e) \boL_{\phi_r}^n 1_{[E_*]}(\emptyset).
  \end{equation*}
Therefore, the spectrum of $\boL_{\phi_r}$ will give asymptotics for
$\sum_{x\in \Sbb_n} H_r(e,x)$. To be able to use the results of the
previous paragraph, one should check that $\phi_r$ is H\"older
continuous.

\begin{lem}
\label{lem_perturb_holder}
There exists $\hol>0$ such that, for any $r\in [1,R]$, the function
$\phi_r$ is H\"older continuous of exponent $\hol$ on $\Sigma^*$.
Therefore, it extends to an H\"older continuous function on $\bSigma$,
that we still denote by $\phi_r$. It satisfies
$\norm{\phi_r}_{\boH^\hol}\leq C$, uniformly in $r\in [1,R]$.
Moreover,
  \begin{equation}
  \label{eq_bound_holder_diff}
  \norm{\phi_r-\phi_R}_{\boH^\hol} \leq C (R-r)^{1/3}.
  \end{equation}
\end{lem}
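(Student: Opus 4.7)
The plan is to use translation invariance of $H_r$ together with the strong uniform Ancona inequalities of Definition~\ref{def_strong_ancona}, and then to combine Lemma~\ref{lem_C0bound} with the interpolation inequality~\eqref{eq_relate_holder} to control the dependence on $r$. For a nonempty path $\omega$, set $g = \alpha(\omega_0)$ and $y = \alpha_*(\sigma\omega)$, so that $\alpha_*(\omega) = g y$; since $H_r$ is invariant under diagonal left multiplication by $\Gamma$,
\begin{equation*}
\phi_r(\omega) = \log\frac{H_r(e, gy)}{H_r(e, y)} = \log\frac{H_r(g^{-1}, y)}{H_r(e, y)},
\end{equation*}
while $\phi_r(\emptyset) = 0$. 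Since $\lgth{g}=1$, Harnack's inequality~\eqref{eq_harnack} applied to both $G_r(g^{-1},y)/G_r(e,y)$ and $G_r(y,g^{-1})/G_r(y,e)$ immediately gives $\norm{\phi_r}_{C^0} \leq C$ uniformly in $r \in [1, R]$.

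For uniform H\"older continuity, consider two paths $\omega, \tilde\omega$ coinciding up to index $n\geq 1$: they share a first letter, hence the same $g$, and their tails $y = \alpha_*(\sigma\omega)$ and $\tilde y = \alpha_*(\sigma\tilde\omega)$ lie at the end of geodesics from $e$ sharing an initial segment of length at least $n-1$. The quadruple $(g^{-1}, e, y, \tilde y)$ therefore fits the tree configuration of Definition~\ref{def_strong_ancona} with separating distance at least $n - O(1)$, and strong Ancona gives
\begin{equation*}
\abs{\log\frac{G_r(g^{-1}, y)/G_r(e, y)}{G_r(g^{-1}, \tilde y)/G_r(e, \tilde y)}} \leq C e^{-\rho n}.
\end{equation*}
Since the tree configuration in Definition~\ref{def_strong_ancona} is symmetric under swapping the two pairs $\{x, x'\}$ and $\{y, y'\}$, applying the same inequality in the swapped form yields the analogous bound for $G_r(y, g^{-1})/G_r(y, e)$. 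Summing the two logarithms produces the $H_r$ version, so $\abs{\phi_r(\omega) - \phi_r(\tilde\omega)} \leq C e^{-\rho n}$. This gives a uniform bound $\norm{\phi_r}_{\boH^{\hol_0}} \leq C$ with $\hol_0 = \rho/\log 2$.

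For the dependence on $r$, Lemma~\ref{lem_C0bound} applied with $a = g^{-1}$, in both its forward and its backward form, gives directly $\abs{\phi_r(\omega) - \phi_R(\omega)} \leq C\sqrt{R-r}$ uniformly in $\omega$, so $\norm{\phi_r - \phi_R}_{C^0} \leq C\sqrt{R-r}$. Combined with the uniform H\"older bound $\norm{\phi_r - \phi_R}_{\boH^{\hol_0}} \leq C$ from the previous paragraph, the interpolation inequality~\eqref{eq_relate_holder} with exponent $\hol = \hol_0/3$ yields
\begin{equation*}
\norm{\phi_r - \phi_R}_{\boH^\hol} \leq 2 \norm{\phi_r - \phi_R}_{C^0}^{2/3} \norm{\phi_r - \phi_R}_{\boH^{\hol_0}}^{1/3} \leq C'(R-r)^{1/3},
\end{equation*}
which is precisely~\eqref{eq_bound_holder_diff}. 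The exponent $1/3$ is exactly what this interpolation permits from $C^0$ decay of order $(R-r)^{1/2}$ and a uniform H\"older seminorm. The only technical subtlety is the verification of the tree configuration for $(g^{-1}, e, y, \tilde y)$, which amounts to a routine tree-approximation argument absorbing the bounded discrepancy $\lgth{g}=1$ into constants; after that, the lemma follows by directly combining the previously established results.
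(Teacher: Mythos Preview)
Your proof is correct and follows essentially the same approach as the paper: rewrite $\phi_r$ via translation invariance, apply strong uniform Ancona inequalities to get a uniform $\boH^{\hol_0}$ bound, use Lemma~\ref{lem_C0bound} for the $C^0$ estimate on $\phi_r-\phi_R$, and interpolate via~\eqref{eq_relate_holder}. Your presentation is in fact slightly more explicit than the paper's in two places---you spell out that the $H_r$-ratio requires invoking the strong Ancona inequality in both directions (which is legitimate by symmetry of the tree configuration), and you make the choice $\hol=\hol_0/3$ explicit---but the argument is the same.
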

\begin{proof}
Consider two finite paths $\omega$ and $\omega'$ with
$d(\omega,\omega')=2^{-n}<1$, so that they match up to length $n\geq
1$. In particular, $\omega_0=\omega'_0$. Let $x=\alpha_*(\omega)$,
$x'=\alpha_*(\omega')$ and $a=\alpha_*(\omega_0)$, so that
$\phi_r(\omega) = \log(H_r(e, x)/H_r(a,x))$ and $\phi_r(\omega') =
\log(H_r(e,x')/H_r(a,x'))$. The points $e,a$ and $x,x'$ are in the
situation of strong Ancona inequalities
(Definition~\ref{def_strong_ancona}) with a separating distance
$n-1$. Since $\mu$ satisfies strong uniform Ancona inequalities, it
follows that $\abs{\phi_r(\omega) -\phi_r(\omega')}\leq C e^{-\rho
n}$ for some $\rho>0$. Hence, for some $\hol'>0$, $\phi_r$ belongs to
$\boH^{\hol'}$ and is uniformly bounded in this space.

Lemma~\ref{lem_C0bound} implies that $\norm{\phi_r-\phi_R}_{C^0} \leq
C(R-r)^{1/2}$. Together with the uniform boundedness of $\phi_r$ in
$\boH^{\hol'}$, this shows that $\norm{\phi_r-\phi_R}_{\boH^\hol}\leq
C(R-r)^{1/3}$ if $\hol$ is small enough, by~\eqref{eq_relate_holder}.

Finally, we have proved all those inequalities on the space
$\Sigma^*$ of finite paths. Since H\"older continuous functions on
$\Sigma^*$ extend to H\"older continuous functions on $\bSigma$, the
result follows.
\end{proof}
\begin{rmk}
One could in fact show that $\norm{\phi_r-\phi_R}_{\boH^\hol} \leq C
(R-r)^{1/2}$ by mimicking the proof of Lemma~\ref{lem_C0bound} at the
level of H\"older exponents. Since this computation is lengthy
and~\eqref{eq_bound_holder_diff} will be sufficient for our purposes,
we omit it.
\end{rmk}

\begin{lem}
We have $\Press(\phi_R)=0$. Moreover, $\phi_R$ is semisimple.
\end{lem}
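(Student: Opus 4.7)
The plan is to translate the sphere-sum estimate of Lemma~\ref{lem_GRbounded} into a spectral statement about $\boL_{\phi_R}$, and then apply Lemma~\ref{lem_jordan} together with Theorem~\ref{thm_spectral_desc}. The starting point is the identity
\begin{equation*}
\sum_{x\in \Sbb_n} H_R(e,x) = H_R(e,e)\, \boL_{\phi_R}^n 1_{[E_*]}(\emptyset),
\end{equation*}
derived just above. Combined with Lemma~\ref{lem_GRbounded}, it gives the uniform upper bound $\boL_{\phi_R}^n 1_{[E_*]}(\emptyset) \leq C$ for all $n\geq 1$.

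I will first obtain $\Press(\phi_R)\leq 0$ and, conditionally on the lower bound, the semisimplicity of $\phi_R$. Since every vertex of the Markov automaton is accessible from $s_*$, each maximal component of $\phi_R$ is reachable from at least one edge $e_0 \in E_*$. Applying Lemma~\ref{lem_jordan} with $k=1$ to such an $e_0$ (and noting that $1_{[E_*]}\geq 1$ on paths starting with $e_0$) yields $\boL_{\phi_R}^n 1_{[E_*]}(\emptyset)\geq C' e^{n\Press(\phi_R)}$, whence $\Press(\phi_R)\leq 0$. If $\phi_R$ were not semisimple, some $e_0 \in E_*$ would reach two successive maximal components, and Lemma~\ref{lem_jordan} with $k=2$ would give $\boL_{\phi_R}^n 1_{[E_*]}(\emptyset)\geq C'' n\, e^{n\Press(\phi_R)}$. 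Combined with the uniform upper bound this forces $\Press(\phi_R) < 0$ strictly, so semisimplicity follows at once from $\Press(\phi_R)\geq 0$.

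The main obstacle is therefore to prove $\Press(\phi_R)\geq 0$. I would argue by contradiction: if $\Press(\phi_R)<0$, then the spectral description (Theorem~\ref{thm_spectral_desc} applied componentwise) gives the exponential bound $\boL_{\phi_R}^n 1_{[E_*]}(\emptyset)\leq Ce^{n\Press(\phi_R)}$, which is summable in $n$; therefore
\begin{equation*}
\eta(R) = \sum_{x\in\Gamma} H_R(e,x) = H_R(e,e) + H_R(e,e)\sum_{n\geq 1}\boL_{\phi_R}^n 1_{[E_*]}(\emptyset) <+\infty,
\end{equation*}
so $A>0$ in Corollary~\ref{cor_crude_eta}. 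To contradict this, the natural route is to exploit the super-multiplicative inequality $u_k u_\ell\leq C\sum_{i=k+\ell}^{k+\ell+C} u_i$ (with $u_n = \sum_{x\in \Sbb_n} H_R(e,x)$) from the proof of Lemma~\ref{lem_GRbounded}, combined with the geometric abundance of directions in a non-elementary hyperbolic group: the many Ancona-typical geodesics from $e$ carry enough Green-function mass up to the spectral radius that the sequence $u_n(R)$ cannot decay exponentially. Turning this heuristic into a rigorous exclusion of exponential decay of $u_n(R)$, and hence into the divergence of $\eta(R)$, is where the core difficulty lies; once it is done, the previous paragraph delivers both conclusions of the lemma.
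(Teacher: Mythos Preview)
Your argument for $\Press(\phi_R)\leq 0$ and for semisimplicity (given $\Press(\phi_R)\geq 0$) is fine and matches the paper's use of Lemma~\ref{lem_jordan} together with the bound of Lemma~\ref{lem_GRbounded}.

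The gap is in $\Press(\phi_R)\geq 0$. You correctly observe that $\Press(\phi_R)<0$ would make $\eta(R)=\sum_x H_R(e,x)$ finite, but then you try to derive a contradiction from supermultiplicativity and ``abundance of directions'', and you yourself flag this as the hard step. In fact supermultiplicativity of $u_k u_\ell \leq C\sum_{i=k+\ell}^{k+\ell+C} u_i$ is perfectly compatible with exponential decay of $u_n$, so this line does not close. The finiteness of $\eta(R)$ (equivalently $A>0$ in Corollary~\ref{cor_crude_eta}) is not by itself contradictory either: the paper only shows $A=0$ \emph{after} this lemma, precisely by using that $\Press(\phi_R)=0$ and $\phi_R$ is semisimple to get a lower bound on $\boL_{\phi_R}^n 1_{[E_*]}(\emptyset)$. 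So your intended route is circular.

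The paper's way out is different and does not go through $\eta(R)$ at all: from the exponential smallness of $\sum_{x\in\Sbb_n}H_R(e,x)$ one shows that the power series $G_r(e,e)=\sum r^n p_n(e,e)$ actually converges for some $r=R+\epsilon$, contradicting the very definition of $R$ as its radius of convergence. This step is not immediate (one has to turn exponential decay of sphere sums of $H_R$ into exponential decay of $p_n(e,e)R^n$); the paper cites the proof of Proposition~7.1 in \cite{gouezel_lalley} for it. That is the missing idea in your proposal.
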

\begin{proof}
Suppose $\Press(\phi_R) < 0$. Then $\boL_{\phi_R}^n 1_{[E_*]}$ goes
to zero exponentially fast in the space of H\"older functions. In
particular, $\sum_{x\in \Sbb_n} H_R(e,x) = H_R(e,e)\boL_{\phi_R}^n
1_{[E_*]}(\emptyset)$ is exponentially small. One can use this
estimate to prove that the series $G_{R+\epsilon}(e,e)$ converges for
some $\epsilon>0$: this is the content of the proof of Proposition
7.1 in \cite{gouezel_lalley} (the proof is written for symmetric
measures, but it applies equally well in non-symmetric situations).
This is a contradiction since, by definition, $R$ is the radius of
convergence of the series $G_r(e,e)$. Hence, $\Press(\phi_R)\geq 0$.

If $\Press(\phi_R)$ were strictly positive, or $\Press(\phi_R)=0$ but
$\phi_R$ were not semisimple, then Lemma~\ref{lem_jordan} would imply
that $\boL_{\phi_R}^n 1_{[E_*]}(\emptyset)$ would tend to infinity.
This quantity is equal to $H_R(e,e)^{-1}\sum_{x\in \Sbb_n} H_R(e,x)$.
Since it remains bounded by Lemma~\ref{lem_GRbounded}, we obtain a
contradiction.
\end{proof}

One can now come back to Corollary~\ref{cor_crude_eta}. Since
$\Press(\phi_R)=0$ and $\phi_R$ is semisimple,
Theorem~\ref{thm_spectral_desc} implies in particular that
$\boL_{\phi_R}^n 1_{[E_*]}(\emptyset)$ is bounded from below. Since
it coincides with $H_R(e,e)^{-1}\sum_{x\in \Sbb_n} H_R(e,x)$, we get
$\sum_{x\in \Gamma} H_R(e,x) = +\infty$. This shows that the constant
$A$ in Corollary~\ref{cor_crude_eta} vanishes, and therefore
  \begin{equation}
  \label{eq_bound_Hr_crude}
  \frac{C^{-1}}{\sqrt{R-r}} \leq \sum_{x\in \Gamma} H_r(e,x) \leq \frac{C}{\sqrt{R-r}}.
  \end{equation}

Let us introduce a convenient notation: we will reserve the notation
$\tau(r)$ (possibly with some indices) for continuous functions of
$r$ taking values in $(0,+\infty)$ that extend continuously up to
$r=R$ and are bounded away from zero.

We will now use the spectral perturbation given by
Proposition~\ref{prop_spectral_perturb} to study $\boL_{\phi_r}$. If
$r$ is close to $R$, then $\phi_r-\phi_R$ is small in $\boH^\hol$ by
Lemma~\ref{lem_perturb_holder}. Applying the proposition on spectral
perturbation to the function $f=1_{[E_*]}$, we get the following. Let
$p$ be the least common multiple of the periods of the maximal
components of $\phi_R$. For any $q\in [0, p)$, one has (since
$\Press(\phi_R)=0$)
  \begin{equation*}
  \boL_{\phi_r}^{np + q} 1_{[E_*]}(\emptyset) = \sum_{i=1}^I e^{(np+q)\Press_i(\phi_r)} \tau_0(q,i,r) + O(e^{-n\epsilon}),
  \end{equation*}
for some functions $\tau_0(q,i,r)$ (as in the notation we introduced
in the previous paragraph). Since this is equal to
$H_r(e,e)^{-1}\sum_{x\in \Sbb_{np+q}} H_r(e,x)$ and since $\sum_{x\in
\Gamma} H_r(e,x) < \infty$, it follows in particular that
$\Press_i(\phi_r)$ is strictly negative for all $i$.

Summing over $n$ and $q$, we get
  \begin{align}
  \notag
  \sum_{x\in \Gamma}H_r(e,x)& = H_r(e,e) \sum_{n,q} \boL_{\phi_r}^{np + q} 1_{[E_*]}(\emptyset)
  = H_r(e,e)\sum_{q=0}^{p-1}\sum_{i=1}^I \frac{e^{q\Press_i(\phi_r)}}{1-e^{p\Press_i(\phi_r)}} \tau_0(q,i,r) + O(1)
  \\&
  \label{eq_sumHrasymp}
  = \sum_{i=1}^I \frac{\tau_1(i,r)}{\abs{\Press_i(\phi_r)}} + O(1),
  \end{align}
for some functions $\tau_1(i,r)$.

By~\eqref{eq_bound_Hr_crude}, $\abs{\Press(\phi_r)}=\inf_i
\abs{\Press_i(\phi_r)}$ is comparable to $\sqrt{R-r}$. It will be
important to show that all the $\abs{\Press_i(\phi_r)}$ are of the
same order of magnitude: otherwise, some components would not play a
significant role for $r<R$ while they would become important at
$r=R$, ruining the continuity properties we are seeking. This is the
main difference with the transitive situation, where there is only
one eigenvalue to consider.

\begin{thm}
\label{thm_Pressi_indep}
For any $i\in [1,I]$, the ratio $\Press_i(\phi_r)/\Press(\phi_r)$
tends to $1$ when $r\to R$.
\end{thm}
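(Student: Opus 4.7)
The plan is to build up to the result in three phases: fix the framework with perturbation theory, extract the correct order of magnitude, and then upgrade a bounded ratio to convergence of the ratio.

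First, I would apply Proposition~\ref{prop_spectral_perturb} to the base potential $\phi_R$ and the perturbation $\psi = \phi_r-\phi_R$. Since the preceding lemma gives $\Press_i(\phi_R)=0$ for each maximal component $i$, and Lemma~\ref{lem_perturb_holder} gives $\norm{\phi_r-\phi_R}_{\boH^\hol}\leq C(R-r)^{1/3}$, the perturbation formula reads
\[
\Press_i(\phi_r) \;=\; \int (\phi_r-\phi_R)\,\dd\mu_i \;+\; O\!\left((R-r)^{2/3}\right).
\]
Since $(R-r)^{2/3}=o(\sqrt{R-r})$, the error is negligible compared to the expected leading order $\sqrt{R-r}$, and the whole question reduces to controlling the integrals $\int(\phi_r-\phi_R)\,\dd\mu_i$.

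Second, I would extract upper and lower bounds of order $\sqrt{R-r}$ for each $|\Press_i(\phi_r)|$. The upper bound $|\Press_i(\phi_r)|\leq C\sqrt{R-r}$ is immediate from Lemma~\ref{lem_C0bound}, which gives $\norm{\phi_r-\phi_R}_{C^0}\leq C\sqrt{R-r}$ and hence $|\int(\phi_r-\phi_R)\,\dd\mu_i|\leq C\sqrt{R-r}$ since each $\mu_i$ is a probability measure. For the lower bound, I use equation~\eqref{eq_sumHrasymp}, which expresses $\eta(r)$ as a sum of positive terms $\tau_1(i,r)/|\Press_i(\phi_r)|$; combining with Corollary~\ref{cor_crude_eta}'s upper bound $\eta(r)\leq C/\sqrt{R-r}$ forces each individual term (and thus each $1/|\Press_i(\phi_r)|$, since $\tau_1(i,r)$ is bounded below) to satisfy $1/|\Press_i(\phi_r)|\leq C/\sqrt{R-r}$, i.e.\ $|\Press_i(\phi_r)|\geq c\sqrt{R-r}$. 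Since $|\Press(\phi_r)|=\min_i|\Press_i(\phi_r)|$, we get $1\leq |\Press_i/\Press|\leq C/c$, a bounded range.

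Third, I need to upgrade this boundedness to $|\Press_i(\phi_r)/\Press(\phi_r)|\to 1$. Via the perturbation formula of the first step, this amounts to showing that for any two maximal components $i,j$,
\[
\int (\phi_r-\phi_R)\,(\dd\mu_i-\dd\mu_j) \;=\; o\!\left(\sqrt{R-r}\right).
\]
The strategy is to exploit that $\phi_r-\phi_R$ has a geometric origin: its value at $\omega$ depends on $\omega$ only through the group elements $\alpha_*(\omega)$ and $\alpha_*(\sigma\omega)$, not on the automaton state. Consequently the two integrals are determined by the pushforward measures $(\alpha_*)_*\mu_i$ and $(\alpha_*)_*\mu_j$ on $\Gamma\cup\partial\Gamma$. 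Following Calegari--Fujiwara \cite{calegari_fujiwara}, one can relate different maximal components via multiplication by a bounded ``bridge'' element of $\Gamma$ (whose existence is Lemma~\ref{lem_rallonge}), and strong uniform Ancona inequalities (Theorem~\ref{thm_ancona_strong}) ensure that $H_r/H_R$, hence $\phi_r-\phi_R$, transforms under this bridging with error decaying exponentially in depth and uniformly of order $\sqrt{R-r}$.

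The main obstacle is precisely this last step: although $\mu_i$ and $\mu_j$ live on paths in disjoint components of the automaton, I must show that the leading $\sqrt{R-r}$-contribution of $\phi_r-\phi_R$ integrates to the same value against each. Intuitively this uses that the singularity of $G_r$ at $r=R$ is a single, universal feature of the group (not of any particular component), so the integrals $\int(\phi_r-\phi_R)\,\dd\mu_i$ must share a common leading asymptotic prefactor; the bridging argument combined with strong Ancona is what converts this heuristic into an $o(\sqrt{R-r})$ cancellation.
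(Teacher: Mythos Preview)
Your first two phases are correct and match the paper's setup exactly: the perturbation expansion from Proposition~\ref{prop_spectral_perturb} reduces the problem to comparing the integrals $\int(\phi_r-\phi_R)\,\dd\mu_i$, and the two-sided bound $|\Press_i(\phi_r)|\asymp\sqrt{R-r}$ follows from Lemma~\ref{lem_C0bound}, \eqref{eq_sumHrasymp}, and \eqref{eq_bound_Hr_crude} just as you say.

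The third phase is where the substance lies, and your sketch does not supply it. The ``bridging'' idea via Lemma~\ref{lem_rallonge} does not work as stated: that lemma concatenates group elements, but the measures $\mu_i$ live on paths confined to distinct components of the automaton, and left multiplication by a group element does not respect the automaton structure. There is no evident way to transport a $\mu_i$-typical path to a $\mu_j$-typical one by a bounded group element while controlling Birkhoff sums of $\phi_r-\phi_R$. Strong Ancona controls ratios of Green functions at four points in a tree-like configuration; it says nothing about comparing ergodic averages over two unrelated shift-invariant measures. Your final paragraph essentially concedes that this step is a heuristic.

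The paper's route is different and in fact proves a stronger statement: $\int\phi_r\,\dd\mu_i$ is \emph{exactly} independent of $i$ for each fixed $r$ (Proposition~\ref{prop_integral_equal}), not merely up to $o(\sqrt{R-r})$. The mechanism is ergodicity on the geometric boundary. One builds a Patterson--Sullivan--type measure $\nu_R$ on $\partial\Gamma$ from the weights $H_R(e,x)$ and shows it is ergodic for the $\Gamma$-action (Proposition~\ref{prop_ergodic}); this uses that the distance $-\log(F_R(x,y)F_R(y,x))$ is hyperbolic, which is where Ancona inequalities actually enter. Then for $c\in\R$ one considers the $\Gamma$-invariant set $U(c)\subset\partial\Gamma$ of points $\xi$ with $\log H_r(e,x_n)/n\to c$ along geodesics to $\xi$. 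Birkhoff's theorem for $(\sigma,\mu_i)$ shows that $\alpha_*$ pushes a set of full $\mu_i$-measure into $U(c_i)$ with $c_i=\int\phi_r\,\dd\mu_i$, and one checks this forces $\nu_R(U(c_i))>0$; ergodicity then gives $\nu_R(U(c_i))=1$ for every $i$, so all the $U(c_i)$ coincide and $c_i=c_j$. This is the Calegari--Fujiwara technique the paper cites: not a combinatorial bridge between components, but an ergodicity argument on $\partial\Gamma$ that is blind to the automaton's component structure.
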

We will prove this theorem in the next subsection. It follows from
this result that
  \begin{equation}
  \label{eq_asymp_eta_Press}
  \sum_{x\in \Gamma} H_r(e,x) = \frac{\tau_2(r)}{\abs{\Press(\phi_r)}} + O(1).
  \end{equation}
Hence, the spectral data of $\boL_{\phi_r}$ are directly related to
the function $\eta(r)=\sum_{x\in \Gamma} H_r(e,x)$.

\subsection{Pressure does not depend on the component}

\label{subsec_integ_indep}
In this subsection, we prove Theorem~\ref{thm_Pressi_indep}. We will
in particular rely on the estimate $\norm{\phi_r-\phi_R}_{\boH^\hol}
\leq C(R-r)^{1/3}$ from~\eqref{eq_bound_holder_diff}, that in turn
was proved using the a priori estimates from Lemma~\ref{lem_C0bound}.

By Proposition~\ref{prop_spectral_perturb}, the variation of the
pressure mainly depends on the integral $\int (\phi_r - \phi_R)
\dd\mu_i$. We will show that this integral does not depend on $i$,
using a geometric argument in the group due to
\cite{calegari_fujiwara}.

Fix some $r\in [1,R]$. For $c\in \R$, we define a set $U(c) \subset
\partial \Gamma$ as the set of points $\xi$ such that, along some geodesic
from $e$ to $\xi$, $\log H_r(e,x) / d(e,x) \to c$. Equivalently, this
convergence holds along any geodesic tending to $\xi$, and one can
replace $H_r(e,x)$ with $H_r(a,x)$ and $d(e,x)$ with $d(b,x)$ for any
$a,b\in \Gamma$. Indeed, geodesics tending to $\xi$ remain within a
bounded distance from each other, by \cite[Proposition
7.2]{ghys_hyperbolique} (therefore, by Harnack inequalities $H_r$
varies by at most a multiplicative constant when one changes
geodesics), and the ratio $H_r(e,x)/H_r(a,x)$ also remains bounded
from above and from below again by Harnack inequalities. In
particular, $U(c)$ is invariant under the action of $\Gamma$: for any
$g\in \Gamma$, $g\cdot U(c) = U(c)$.

Let $c_i=\int  \phi_r \dd\mu_i$, we will show that, for all
$i\not=i'$, $g\cdot U(c_i)$ intersects $U(c_{i'})$ for some $g\in
\Gamma$. This will give $U(c_i)=U(c_{i'})$, hence $c_i=c_{i'}$ as
desired. To prove this, we will show that the sets $U(c_i)$ all have
positive measure for some measure on $\partial \Gamma$ which is
ergodic under the action of $\Gamma$.

Let us first construct the measure. Let $p$ be the least common
multiple of the periods $p_i$, and fix $q\in [0,p)$. It follows from
the spectral description of $\boL_{\phi_R}$
(Theorem~\ref{thm_spectral_desc}) that, for any H\"older continuous
function $f$ on $\bSigma$, $\boL_{\phi_R}^{np+q} f(\emptyset)$
converges when $n\to \infty$. In turn, this convergence follows for
any continuous function, by approximation (since the iterates of
$\boL_{\phi_R}$ on $C^0$ remain bounded, since $\boL_{\phi_R}^n 1$
itself remains bounded). If $f$ is a continuous function on
$\Gamma\cup \partial \Gamma$, then $\sum_{x\in \Sbb_n} H_R(e,x) f(x)
= H_R(e,e) \boL_{\phi_R}^n (1_{[E_*]}\cdot f\circ
\alpha_*)(\emptyset)$, and $f\circ\alpha_*$ is continuous. Let us
define a measure $m_n$ supported on $\Sbb_n$ by $m_n =\sum_{x\in
\Sbb_n} H_R(e,x)\delta_x$, this shows that the sequence of measures
$m_{np+q}$ converges to a limiting measure (which is supported on
$\partial \Gamma$, and has mass bounded from above and from below).
We deduce that the measures $(\sum_{1}^N m_n)/(\sum_1^N m_n(\Gamma))$
converge to a probability measure on $\partial \Gamma$, that we
denote by $\nu_R$. It also follows that this measure can be
constructed using the Patterson-Sullivan technique: the measures
  \begin{equation}
  \label{eq_defthetas}
  \theta_s = \sum_{x\in \Gamma} H_R(e,x) e^{-s \lgth{x}}\delta_x / \sum_{x\in \Gamma} H_R(e,x) e^{-s \lgth{x}}
  \end{equation}
are well defined for $s>0$, and they converge when $s$ tends to $0$
towards $\nu_R$.

For $g\in \Gamma$, let us denote by $L_g$ the left multiplication by
$\Gamma$. Then, for any $x\in\Gamma$,
  \begin{equation*}
  (L_g)_* \theta_s (x)= \theta_s(g^{-1}x) = \frac{H_R(e, g^{-1}x)}{H_R(e,x)} e^{-s(\lgth{g^{-1}x} - \lgth{x})} \theta_s(x)
  = \tilde K_x(g) e^{-s(\lgth{g^{-1}x} - \lgth{x})} \theta_s(x),
  \end{equation*}
where $\tilde K_x(g) = H_R(g,x)/H_R(e,x)$ is the Martin Kernel
associated to $H_R$. When $x$ tends to a point $\xi\in \partial
\Gamma$, this quantity converges to a limit denoted by $\tilde
K_\xi(g)$. Since $\lgth{g^{-1}x} - \lgth{x}$ is uniformly bounded
when $x$ varies in $\Gamma$, we deduce letting $s$ tend to $0$ that
  \begin{equation}
  \label{eq_quasiconformal}
  \frac{\dd(L_g)_* \nu_R}{\dd\nu_R}(\xi) = \tilde K_\xi(g).
  \end{equation}

Following the classical arguments of Patterson-Sullivan (due in this
context to~\cite{coornaert} and~\cite{BHM:2}), we deduce the
following:
\begin{prop}
\label{prop_ergodic}
The measure $\nu_R$ is ergodic for the action of $\Gamma$.
\end{prop}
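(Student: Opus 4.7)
The argument follows the classical Patterson--Sullivan framework for proving ergodicity of a conformal measure on the boundary of a hyperbolic group, adapted here with the Green-function kernel $\tilde K_\xi$ playing the role of the usual conformal Jacobian; this is exactly the route followed by Coornaert \cite{coornaert} and by \cite{BHM:2}. The three ingredients I need are a shadow lemma, a divergence-type Borel--Cantelli argument, and a Lebesgue density argument transported by the $\Gamma$-action.

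First, I prove a \emph{shadow lemma}: for $D>0$ sufficiently large and for the shadow $O_D(x)\subset\partial\Gamma$ consisting of endpoints of geodesic rays from $e$ passing within distance $D$ of $x$, there is a constant $C>0$ with
\begin{equation*}
C^{-1}H_R(e,x)\leq \nu_R(O_D(x))\leq C\,H_R(e,x)\qquad(x\in\Gamma).
\end{equation*}
The upper bound follows from \eqref{eq_quasiconformal} applied with $g=x^{-1}$: the Radon--Nikodym derivative $\tilde K_\xi(x^{-1})=H_R(x^{-1},\xi)/H_R(e,\xi)$ is, by the uniform Ancona inequalities of Theorem~\ref{thm_ancona}, comparable to $H_R(e,x)$ for every $\xi\in O_D(x)$, and $\nu_R(x^{-1}O_D(x))\leq\nu_R(\partial\Gamma)=1$. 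For the lower bound I fix a neighborhood $U$ of some reference boundary point with $\nu_R(U)>0$, use non-elementarity to guarantee that a translate $x^{-1}O_D(x)$ contains $U$ once $D$ is large, and apply quasi-conformality again.

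Second, I deduce that $\nu_R$-almost every boundary point is a \emph{conical limit point}. Because $\sum_x H_R(e,x)=+\infty$ (established immediately above the statement via $A=0$ in Corollary~\ref{cor_crude_eta}), the shadow lemma yields $\sum_{x\in\Gamma}\nu_R(O_D(x))=+\infty$. A standard Borel--Cantelli argument, using the geometric fact that shadows $O_D(x)$ with $|x|$ in a bounded window are essentially pairwise independent up to bounded overlap (via tree approximation), shows that $\nu_R$-a.e.\ $\xi$ lies in $O_D(x)$ for infinitely many $x$, hence is approached conically by elements of $\Gamma$.

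Third, suppose $A\subset\partial\Gamma$ is $\Gamma$-invariant with $\nu_R(A)>0$. The shadow lemma implies that the family $\{O_D(x)\}_{x\in\Gamma}$ is a Vitali-type basis for $\nu_R$ along conical approaches, so at $\nu_R$-almost every density point $\xi$ of $A$ one has $\nu_R(A\cap O_D(x_n))/\nu_R(O_D(x_n))\to 1$ for any conical sequence $x_n\to\xi$. Pushing forward by $L_{x_n^{-1}}$ and using \eqref{eq_quasiconformal} together with the Ancona-based estimate of $\tilde K$ on shadows (as in the first step), this density is transferred to a uniform lower bound $\nu_R(A\cap V_n)/\nu_R(V_n)\geq c>0$ for neighborhoods $V_n$ of bounded diameter around $x_n^{-1}\xi$. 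Since $\Gamma$ acts minimally on $\partial\Gamma$ (non-elementarity), the accumulation points of $(x_n^{-1}\xi)$ can be arranged dense, giving $\nu_R(A)\geq c$ on a set of full measure; applying the same reasoning to $A^c$ forces $\nu_R(A)\in\{0,1\}$.

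The main obstacle is the shadow lemma, since the standard proof in hyperbolic geometry uses the conformal factor $e^{-s\,d(e,x)}$ together with triangle-inequality estimates on Busemann functions, whereas here the ``conformal factor'' is the Martin kernel $\tilde K_\xi$. Controlling $\tilde K_\xi$ uniformly for $\xi\in O_D(x)$ is precisely where the uniform Ancona inequalities are needed; once they are invoked, the remainder of the ergodicity argument is classical and goes through verbatim.
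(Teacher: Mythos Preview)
Your outline is essentially correct and reproduces the classical Patterson--Sullivan ergodicity argument (shadow lemma plus Lebesgue density), but the paper proceeds differently. Instead of proving the shadow lemma and the density argument by hand, the paper introduces the symmetrized Green metric $\tilde d(x,y)=-\log\bigl(F_R(x,y)F_R(y,x)\bigr)$, checks that it is a genuine metric (via the submultiplicativity~\eqref{eq_Fr_subadd} and finiteness of $G_R$), that it is bi-Lipschitz to the word metric (via Harnack and Ancona), and that it is Gromov-hyperbolic (via an argument from~\cite{BHM:2}). The quasi-conformality relation~\eqref{eq_quasiconformal} then says precisely that $\nu_R$ is a quasi-conformal density of exponent~$1$ for~$\tilde d$, and ergodicity follows by a direct citation of~\cite[Theorem~2.7]{BHM:2}. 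In effect, the paper outsources your shadow/density argument to~\cite{coornaert,BHM:2} after repackaging the Martin kernel as a Busemann cocycle for a new hyperbolic metric; your approach unpacks that black box but with the Martin kernel kept in its original form. Both routes hinge on Ancona inequalities at the same point: you use them to estimate $\tilde K_\xi(x^{-1})$ on shadows, the paper uses them to prove $\tilde d$ is comparable to~$d$ and hyperbolic.

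Two small remarks on your write-up. First, your Step~2 is unnecessary here: since $\Gamma$ is the vertex set of its own Cayley graph, every geodesic ray to $\xi\in\partial\Gamma$ already consists of group elements, so every boundary point is trivially a conical limit point; no Borel--Cantelli or divergence argument is needed. Second, in your shadow-lemma upper bound the estimate $\tilde K_\xi(x^{-1})\asymp H_R(e,x)$ should be asserted for $\xi\in x^{-1}O_D(x)$ (where the geodesic from $x^{-1}$ to $\xi$ passes near~$e$, so Ancona applies), not for $\xi\in O_D(x)$ as written; the integration you want is $\nu_R(O_D(x))=\int_{x^{-1}O_D(x)}\tilde K_\xi(x^{-1})\,d\nu_R(\xi)$.
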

\begin{proof}
We want to apply the results of~\cite{coornaert} and~\cite{BHM:2}
saying that a Patterson-Sullivan measure is ergodic. Thus, we should
interpret the function $\tilde K_\xi(g)$ in~\eqref{eq_quasiconformal}
as the exponential of a Busemann cocycle. Since $\tilde K_\xi(g)$ is
the limit of $H_R(g,x)/H_R(e,x)$, the function $\log \tilde K_\xi$
would be the Busemann cocycle associated to a distance $\tilde d$ if
$H_R(x,y) = C e^{-\tilde d(x,y)}$, for some constant $C$. Let us
therefore set $\tilde d(x,y) = -\log( F_R(x,y) F_R(y,x))$, where
$F_R$ is the first visit Green function. We should show that $\tilde
d$ is a distance, that it is equivalent to $d$, and hyperbolic, to be
able to apply the results of~\cite{coornaert} and~\cite{BHM:2}.

The subadditivity~\eqref{eq_Fr_subadd} of $F_R$ shows that $\tilde d$
satisfies the triangular inequality. For $x\not=y$, considering $n$
concatenations of paths from $x$ to $y$ then to $x$, one gets
  \begin{equation*}
  G_R(x,x) \geq \sum_{n=0}^\infty (F_R(x,y)F_R(y,x))^n.
  \end{equation*}
Since $G_R(x,x)$ is finite, this shows that $F_R(x,y)F_R(y,x)<1$.
Hence, $\tilde d$ is a distance. It is a variant of the Green
distance studied in~\cite{BHM:2}.

The quantity $G_R(e,e)$, which is finite, equals $\sum_{\gamma}
w_R(\gamma)$ (where the sum is over all paths from $e$ to itself, and
the notation $w_R(\gamma)$ for the $R$-weight of a path $\gamma$ has
been introduced in Subsection~\ref{subsec_green}). Excluding finitely
many paths, one can make the remaining sum arbitrarily small. If $x$
is not on one of those finitely many paths, then $F_R(e,x)F_R(x,e)$
is bounded by the remaining sum, and is therefore arbitrarily small.
This shows that $\tilde d(e,x)$ tends to infinity when $x\to \infty$
in $\Gamma$.

Since $G_R$ (or, equivalently, $F_R$) satisfies Ancona inequalities,
there exists $D>0$ such that $\tilde d(x,z) \geq \tilde d(x,y)+\tilde
d(y,z)-D$ whenever $x,y,z$ are on a geodesic segment in this order.
Let $L$ be such that $\tilde d(e,x)\geq 2D$ for $\lgth{x}\geq L$. By
induction, this implies that $\tilde d(e,x) \geq (n+1)D$ for
$\lgth{x}\geq nL$. In particular, there exists a constant $C>0$ such
that, for all $x$, $\tilde d(e,x) \geq C^{-1} \lgth{x}$. By Harnack
inequalities~\eqref{eq_harnack}, we also have $\tilde d(e,x) \leq
C\lgth{x}$. This shows that the distance $\tilde d$ is equivalent to
the word distance $d$.

The word distance is hyperbolic. It does not immediately follow that
$\tilde d$ is hyperbolic, since the metric space $(\Gamma, \tilde d)$
is usually not geodesic (while the invariance of hyperbolicity under
quasi-isometries requires such an assumption). However, \cite{BHM:2}
proves that if Ancona inequalities hold then $\tilde d$ is hyperbolic
(the proof given in their Theorem 1.1 is for the usual Green metric,
but it applies verbatim in our setting).


Finally, we can apply the results of Paragraphs~2.2 and~2.3
in~\cite{BHM:2}. The equation~\eqref{eq_quasiconformal} shows that
$\nu_R$ is quasi-conformal for a distance at infinity coming from the
hyperbolic distance $\tilde d$ on $\Gamma$. Therefore, \cite[Theorem
2.7]{BHM:2} implies that $\nu_R$ is ergodic.
\end{proof}

\begin{prop}
\label{prop_integral_equal}
For $i\not=i'$, one has $\int \phi_r\dd\mu_i =
\int\phi_r\dd\mu_{i'}$.
\end{prop}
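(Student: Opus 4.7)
The plan is to leverage the ergodicity of $\nu_R$ established in Proposition~\ref{prop_ergodic}. Since the sets $U(c) \subset \partial\Gamma$ are $\Gamma$-invariant and pairwise disjoint for distinct values of $c$, it suffices to prove that $\nu_R(U(c_i)) > 0$ for each $i$, where $c_i = \int \phi_r \dd\mu_i$: ergodicity will then force each $U(c_i)$ to carry full measure, so they must all coincide as sets of full measure, and the common value of $c$ must be the same, i.e., $c_i=c_{i'}$ for all $i,i'$.

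To approach this, I would first apply Birkhoff's ergodic theorem to the $\sigma$-ergodic probability measure $\mu_i$ and the continuous function $\phi_r$, obtaining $(1/n) S_n \phi_r(\omega) \to c_i$ for $\mu_i$-a.e.\ $\omega$. Since the strongly Markov property ensures that $\alpha_*(\omega_0\dotsm\omega_{n-1})$ has word length exactly $n$, and since $S_n\phi_r(\omega) = \log H_r(e, \alpha_*(\omega_0\dotsm\omega_{n-1})) - \log H_r(e,e)$, this says precisely that $\alpha_*(\omega) \in U(c_i)$ for $\mu_i$-a.e.\ $\omega$, or equivalently, the $\sigma$-invariant set $A_{c_i} = \{\omega : (1/n)S_n\phi_r(\omega) \to c_i\}$ satisfies $\mu_i(A_{c_i}) = 1$.

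Next I would identify $\nu_R$ explicitly in terms of the spectral data. From the Patterson-Sullivan construction $\nu_R = \lim_{s \to 0} \theta_s$, the identity $\sum_{x\in\Sbb_n}H_R(e,x) f(x) = H_R(e,e) \boL_{\phi_R}^n(1_{[E_*]}\cdot f\circ\alpha_*)(\emptyset)$, and the spectral description of $\boL_{\phi_R}$ with $\Press(\phi_R) = 0$ (Theorem~\ref{thm_spectral_desc}), extracting the $1/s$ singularity as $s \to 0^+$ yields
  \begin{equation*}
  \nu_R = \sum_{i=1}^I \alpha_i (\alpha_*)_* \tilde\beta_i, \qquad \tilde\beta_i = \frac{1_{[E_*]}\beta_i}{\int 1_{[E_*]}\dd\beta_i},
  \end{equation*}
where $\beta_i = \sum_j \lambda_{i,j}$ and each coefficient $\alpha_i$ is strictly positive (since $h_{i,j}(\emptyset) > 0$ and since, by the strongly Markov property, $s_*$ can reach any vertex of any maximal component $\boC_i$, producing an infinite path starting with an edge of $E_*$ and lying in the support of some $\lambda_{i,j}$). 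It then suffices to show that $\beta_i(A_{c_i}^c) = 0$, so that $(\alpha_*)_*\tilde\beta_i$ is supported on $U(c_i)$ and $\nu_R(U(c_i)) \geq \alpha_i > 0$.

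This last reduction is the step I expect to be the main obstacle. The density relation $\mu_i = \frac{1}{p_i}\sum_j h_{i,j}\lambda_{i,j}$ together with $\mu_i(A_{c_i}) = 1$ immediately yields $\lambda_{i,j}(A_{c_i}^c \cap \{h_{i,j}>0\}) = 0$, that is, Birkhoff-genericity holds $\lambda_{i,j}$-a.s.\ on paths starting from the \emph{descendant} set $\boC_{i,j,\to}$. The missing part consists of paths in the support of $\lambda_{i,j}$ that start in the \emph{ancestor} set $\boC_{\to,i,j}$ but outside $\boC_i$, which need not overlap significantly with $\boC_{i,j,\to}$. To cover them, I would use the fact that such a $\beta_i$-generic $\omega$ eventually enters $\boC_i$ at some time $N = N(\omega)$, so $\sigma^N\omega$ starts in some $\boC_{i,k'}$ with $h_{i,k'}(\sigma^N\omega) > 0$; combining the $\sigma$-invariance of $A_{c_i}$ with the absolute continuity $\sigma_*\beta_i \ll \beta_i$ from Lemma~\ref{lem_absolute_continuity} (iterated) would then propagate the Birkhoff-genericity of $\sigma^N\omega$ back to $\omega$ itself.
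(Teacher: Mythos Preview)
Your approach is essentially the same as the paper's: apply Birkhoff's theorem for $\mu_i$, use the equivalence of $\mu_i$ with $\beta_i$ restricted to $\bSigma_i$ together with Lemma~\ref{lem_absolute_continuity} to upgrade $\mu_i(A_{c_i})=1$ to $\beta_i(A_{c_i}^c)=0$, relate $\nu_R$ to $(\alpha_*)_*(\beta_i|_{[E_*]})$, deduce $\nu_R(U(c_i))>0$, and conclude by ergodicity.

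One small slip to fix: the displayed identity $S_n\phi_r(\omega)=\log H_r(e,\alpha_*(\omega_0\dotsm\omega_{n-1}))-\log H_r(e,e)$ is only valid when $\omega$ is the \emph{finite} path $\bar\omega_n=\omega_0\dotsm\omega_{n-1}$, not for the infinite path $\omega$ on which you apply Birkhoff's theorem (for infinite $\omega$ the potential $\phi_r$ is defined by H\"older extension, not by the original formula). The paper handles this by observing that H\"older continuity of $\phi_r$ gives $|S_n\phi_r(\omega)-S_n\phi_r(\bar\omega_n)|\leq C$ uniformly in $n$, so $\tfrac{1}{n}\log H_r(e,\alpha_*(\bar\omega_n))\to c_i$ still follows. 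With this correction your argument is complete.
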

\begin{proof}
The limit of $\boL_{\phi_R}^{np+q} f(\emptyset)$ is given by
$\sum_{i=1}^I \sum_{j=0}^{p_i-1}\left(\int f\dd\lambda_{i, (j-q \bmod
p_i)}\right) h_{i,j}(\emptyset)$. Since $h_{i,j}(\emptyset)$ is
bounded from above and from below, we deduce that $\nu_R$ is
equivalent to the push-forward under $\alpha_*$ of the measure
$\sum_{i,j} \lambda_{i,j}$ restricted to the set of paths beginning
with an edge in $E_*$.

The probability measure $\dd\mu_i = \frac{1}{p_i}\sum_{j=0}^{p_i-1}
h_{i,j} \dd\lambda_{i,j}$ is invariant and ergodic. Let $O_i \subset
\Sigma$ denote the set of points such that the normalized Birkhoff
sums $S_n f/n$ converge to $\int f\dd\mu_i$ for any continuous
function $f$. By Birkhoff ergodic theorem, $\mu_i(O_i)=1$. Since
$\mu_i$ is equivalent to $\beta_i = \sum_j \lambda_{i,j}$ restricted
to the set $\bSigma_i$ of paths staying in the component $\boC_i$, we
get $\beta_i(O_i^c \cap \bSigma_i)=0$ (where $O_i^c$ denotes the
complement of $O_i$). We deduce that
  \begin{equation}
  \label{eq_betai}
  \beta_i(O_i^c)=0.
  \end{equation}
Otherwise, since $\beta_i$-almost every point ends up in $\bSigma_i$
after finitely many iterations, we would have $\beta_i(O_i^c \cap
\sigma^{-k} \bSigma_i)>0$ for some $k\geq 0$, hence
$\beta_i(\sigma^{-k}(\sigma^k O_i^c \cap \bSigma_i))>0$. Since
$\sigma_*^k \beta_i$ is absolutely continuous with respect to
$\beta_i$ by Lemma~\ref{lem_absolute_continuity}, and $\sigma^k O_i^c
\subset O_i^c$, this gives $\beta_i(O_i^c \cap \bSigma_i)>0$, a
contradiction.

Let us now show that
  \begin{equation}
  \label{eq_nuR_pos}
  \nu_R( \alpha_*(O_i \cap [E_*])) > 0,
  \end{equation}
where $O_i\cap [E_*]$ denotes the set of paths in $O_i$ beginning
with an edge in $E_*$. Otherwise, since the image of $\beta_i(\cdot
\cap [E_*])$ is absolutely continuous with respect to $\nu_R$, we
would get $(\alpha_* \beta_i)(\alpha_*(O_i \cap [E_*])) = 0$, hence
$\beta_i(O_i \cap [E_*]) = 0$. Since $\beta_{i}(O_i^c)=0$
by~\eqref{eq_betai}, we get $\beta_i([E_*]) = 0$. This is a
contradiction since Theorem~\ref{thm_spectral_desc} shows that
$\beta_i$ gives positive weight to $[E_*]$.

Consider now $\omega\in O_i\cap [E_*]$, and let $\xi =
\alpha_*(\omega) \in \partial \Gamma$. The path $\alpha(\omega)$ is a
geodesic converging to $\xi$. In particular, denoting by
$\bar\omega_n$ the beginning of $\omega$ of length $n$,
$x_n=\alpha_*(\bar\omega_n)$ is a sequence of points converging to
$\xi$ along a geodesic ray. Moreover,
  \begin{equation*}
  \log H_r(e,x_n) = S_n \phi_r(\bar\omega_n) + \log H_r(e,e).
  \end{equation*}
Since $\phi_r$ is H\"older continuous, $S_n \phi_r(\bar\omega_n) - S_n
\phi_r(\omega)$ remains uniformly bounded. Hence, $\log H_r(e,x_n) /
n = S_n \phi_r(\omega)/n + o(1)$ tends to $c_i = \int \phi_r
\dd\mu_i$ by definition of $O_i$. This shows that $\xi \in U(c_i)$.
Therefore, $\alpha_*(O_i\cap [E_*]) \subset U(c_i)$.
With~\eqref{eq_nuR_pos}, this gives $\nu_R(U(c_i)) > 0$.

Since $\nu_R$ is ergodic for the action of $\Gamma$ by
Proposition~\ref{prop_ergodic}, and the sets $U(c)$ are
$\Gamma$-invariant, we deduce that $U(c_i)$ has full measure.
Therefore, all those sets have to coincide.
\end{proof}

\begin{proof}[Proof of Theorem~\ref{thm_Pressi_indep}]
By Proposition~\ref{prop_spectral_perturb}, the pressure
$\Press_i(\phi_r)$ on the component $\boC_i$ is equal to $\int
(\phi_r-\phi_R) \dd\mu_i + O(\norm{\phi_r-\phi_R})^2$. The integral
does not depend on $i$, by Proposition~\ref{prop_integral_equal}.
Considering $i'$ such that the pressure is maximal, we obtain
  \begin{equation*}
  \Press_i(\phi_r) = \Press(\phi_r) + O(\norm{\phi_r-\phi_R})^2.
  \end{equation*}
Moreover, by~\eqref{eq_bound_Hr_crude} and~\eqref{eq_sumHrasymp}, the
ratio between $\Press(\phi_r)$ and $-\sqrt{R-r}$ is bounded from
above and below. Since $\norm{\phi_r-\phi_R}^2 = O(R-r)^{2/3}$ by
Lemma~\ref{lem_perturb_holder}, we obtain $\Press_i(\phi_r) =
\Press(\phi_r) + o(\Press(\phi_r))$. This concludes the proof.
\end{proof}

\subsection{Estimating the second derivative of the Green function}

To improve on Proposition~\ref{prop_aprioridiff}, one should get
asymptotics for the function $\sum_{x,y} G_r(e,y) G_r(y,x)G_r(x,e)$
in terms of $\eta(r)=\sum G_r(e,x)G_r(x,e) = \sum H_r(e,x)$ or,
equivalently, in terms of $\Press(\phi_r)$.

\begin{prop}
\label{prop_triple_sum}
One has when $r\to R$
  \begin{equation}
  \label{eq_triplesum}
  \sum_{x,y} G_r(e,y) G_r(y,x)G_r(x,e) = c(r) \eta(r)^3 +
  O(\eta(r)^2),
  \end{equation}
for some nonnegative function $c(r)$ that extends continuously to
$r=R$.
\end{prop}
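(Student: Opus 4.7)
My plan is to upgrade the inequalities of Proposition~\ref{prop_aprioridiff} into asymptotics, by combining a geometric reparameterization, the strong Ancona inequalities (Theorem~\ref{thm_ancona_strong}), and the spectral description of $\boL_{\phi_r}$ furnished by Proposition~\ref{prop_spectral_perturb}.

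First I would carry out the geometric reparameterization. For each pair $(x,y)\in\Gamma^2$, the thin-triangle property (Theorem~\ref{thm:tree_approx}) gives a point $w=w(x,y)\in\Gamma$, unique up to bounded error, lying within bounded distance of all three sides of the geodesic triangle $(e,x,y)$. Setting $u_1=w^{-1}x$ and $u_2=w^{-1}y$, the map $(x,y)\mapsto(w,u_1,u_2)$ is a bounded-to-one surjection onto the set $\mathcal{T}$ of triples for which the three segments $[e,w]$, $[w,wu_1]$, $[w,wu_2]$ genuinely trifurcate at $w$. Applying Ancona inequalities to each side of the triangle and using the left-invariance of $G_r$ (so that $H_r(w,wu_i)=H_r(e,u_i)$), I obtain
\begin{equation*}
G_r(e,y)\,G_r(y,x)\,G_r(x,e) = \rho(w,u_1,u_2,r)\,H_r(e,w)\,H_r(e,u_1)\,H_r(e,u_2),
\end{equation*}
with $\rho$ bounded uniformly in $r\in[1,R]$ and $(w,u_1,u_2)\in\mathcal{T}$.

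The second step is to recognize the asymptotic of the resulting triple sum. By strong Ancona inequalities, $\rho$ stabilizes, exponentially in the separation between the three legs, to a limit $\rho_\infty(\xi_0,\xi_1,\xi_2,r)$ depending only on the three boundary endpoints $\xi_0,\xi_1,\xi_2\in\partial\Gamma$ of the legs and continuously on $r$ up to $R$. Encoding each leg as a path in the strongly Markov automaton and using the spectral description of $\boL_{\phi_r}$ (Theorem~\ref{thm_spectral_desc} and Proposition~\ref{prop_spectral_perturb}), each single-leg sum $\sum H_r(e,\cdot)$ along a cone contributes a factor of order $\tau(r)/|\Press(\phi_r)|$; assembling the three legs and integrating the stabilized $\rho_\infty$ against the limiting Patterson-Sullivan-type measures on $\partial\Gamma^3$ built from the spectral data produces a coefficient of order $c(r)/|\Press(\phi_r)|^3$. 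Invoking~\eqref{eq_asymp_eta_Press}, which says $\eta(r)\sim\tau(r)/|\Press(\phi_r)|$, this is exactly $c(r)\eta(r)^3$ with the claimed continuity at $r=R$.

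The $O(\eta(r)^2)$ error collects two contributions: (a) degenerate configurations in $\mathcal{T}$ where two of the three legs are nearly aligned, which after reparameterization are bounded via Lemma~\ref{lem_GRbounded} by at most $\eta(r)^2$, and (b) the exponentially small discrepancy $\rho-\rho_\infty$, which is summable along one leg and hence saves one full factor of $\eta(r)$ relative to the main term. The main obstacle is the continuity of $c(r)$ at $r=R$ in the non-transitive setting: the triple sum mixes spectral data of several maximal components with a priori different perturbative behaviors, and one has to invoke Theorem~\ref{thm_Pressi_indep} together with the uniform H\"older control~\eqref{eq_bound_holder_diff} to guarantee that all component contributions remain of the same asymptotic order as $r\to R$, so that a single coefficient $c(r)$ with a well-defined, continuous limit at $R$ emerges.
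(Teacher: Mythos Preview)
Your tripod reparameterization is the right geometric picture, and the paper's own argument can be read as a version of it; but the symmetric three-leg organization you propose has a real gap around the change of variables that you do not address.

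Once you choose a tripod center $w=w(x,y)$, the map $(x,y)\mapsto(w,w^{-1}x,w^{-1}y)$ is actually a \emph{bijection} onto its image, so calling it a ``bounded-to-one surjection onto $\mathcal{T}$'' conflates two different sets. If $\mathcal{T}$ denotes the image of your chosen section, it is characterized by ``$w$ is the \emph{selected} center of $(e,wu_1,wu_2)$,'' a condition that depends on an arbitrary choice function and is neither H\"older nor automaton-friendly. If instead $\mathcal{T}$ is the geometrically natural set of all genuine tripods, then summing over it overcounts each $(x,y)$ by the number $M(x,y)$ of admissible centers, which is bounded but not constant and not H\"older. Either way, the indicator $1_{\mathcal{T}}$ you must insert into the triple transfer-operator sum is not a H\"older observable on $\bSigma^3$, and you give no mechanism to smooth it. For the two-sided \emph{inequality} of Proposition~\ref{prop_aprioridiff} this multiplicity issue is harmless; for an \emph{asymptotic} with a coefficient $c(r)$ continuous at $R$ it is the whole difficulty.

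The paper resolves this by an asymmetric decomposition. It writes the triple sum as $\eta(r)\int\Phi_r\,d\nu_r$ with $\Phi_r(x)=\sum_y G_r(e,y)G_r(y,x)/G_r(e,x)$, proves that $\nu_r$ converges weakly, and then expresses $\Phi_r$ along the \emph{single} automaton word encoding $x$ as a Birkhoff sum $\eta(r)\,S_n h_r+O(\eta(r))$. The role of your tripod center is played by the projection of $y$ onto $[e,x]$, and the ambiguity of that projection is absorbed by a smooth partition of unity (the functions $\kappa_\gamma$) rather than a hard indicator; this is precisely what makes the resulting observable $h_r$ H\"older and convergent as $r\to R$ (after a coboundary reduction to the unilateral shift). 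A computation with a single copy of $\boL_{\phi_r}$ acting on $1_{[E_*]}\cdot S_n h_r$ then gives the asymptotic, with Theorem~\ref{thm_Pressi_indep} ensuring that all maximal components contribute at the same rate so that a single continuous $c(r)$ emerges.
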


This subsection is devoted to the proof of this proposition. We will
need to express things in terms of transfer operators on the symbolic
space. Let
  \begin{equation}
  \label{eq_def_Phir}
  \Phi_r(x) = \frac{\sum_y G_r(e,y)G_r(y,x)}{G_r(e,x)},
  \end{equation}
and define for $r<R$ a probability measure $\nu_r$ on $\Gamma$ by
  \begin{equation}
  \label{eq_defnur}
  \int f\dd\nu_r = \frac{\sum_{x\in \Gamma} H_r(e,x) f(x)}{\sum_{x\in \Gamma} H_r(e,x)}.
  \end{equation}
The sum in~\eqref{eq_triplesum} is equal to
  \begin{equation*}
  \eta(r) \int \Phi_r \dd\nu_r.
  \end{equation*}
To estimate it, we should understand $\Phi_r$ and $\nu_r$.

\begin{prop}
\label{prop_nur_converges}
When $r\to R$, the sequence of probability measures $\nu_r$ on the
compact space $\Gamma\cup \partial \Gamma$ converges weakly to a
probability measure $\nu_R$, which is supported on $\partial \Gamma$.
\end{prop}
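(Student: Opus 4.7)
The plan is to express $\int f\,d\nu_r$ as a ratio of transfer-operator sums and to identify the limit using Proposition~\ref{prop_spectral_perturb} together with Theorem~\ref{thm_Pressi_indep}. For any continuous $f$ on $\Gamma\cup\partial\Gamma$, the function $g = 1_{[E_*]}\cdot(f\circ\alpha_*)$ is continuous on $\bSigma$, and the identity $\sum_{x\in\Sbb_n} H_r(e,x) f(x) = H_r(e,e)\,\boL_{\phi_r}^n g(\emptyset)$, which holds for $n\geq 1$ by the bijection between paths of length $n$ starting at $s_*$ and points of $\Sbb_n$, gives after dividing by $\eta(r)$
\[
\int f\,d\nu_r = \frac{\sum_{n\geq 1}\boL_{\phi_r}^n g(\emptyset) + f(e)}{\sum_{n\geq 1}\boL_{\phi_r}^n 1_{[E_*]}(\emptyset) + 1}.
\]

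First assume that $f$, hence $g$, is H\"older continuous on $\bSigma$. Since $\norm{\phi_r-\phi_R}_{\boH^\hol}\to 0$ by Lemma~\ref{lem_perturb_holder}, Proposition~\ref{prop_spectral_perturb} applies; writing $n = mp+q$ with $q\in[0,p)$ and summing the resulting geometric series in $m$ exactly as in the derivation of~\eqref{eq_sumHrasymp}, one obtains
\[
\sum_{n\geq 1}\boL_{\phi_r}^n g(\emptyset) = \sum_{i=1}^I \frac{T_i(g,r)}{\abs{\Press_i(\phi_r)}} + O(1),
\]
where each $T_i(g,r)$ is assembled from the perturbed eigendata $h^{\phi_r-\phi_R}_{i,j}$ and $\lambda^{\phi_r-\phi_R}_{i,j}$ and depends continuously on $r\in[1,R]$, with $T_i(1_{[E_*]},R)>0$. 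The same expansion holds with $g$ replaced by $1_{[E_*]}$. By Theorem~\ref{thm_Pressi_indep}, $\abs{\Press_i(\phi_r)}/\abs{\Press(\phi_r)}\to 1$ for every $i$, and $\abs{\Press(\phi_r)}\to 0$; thus both numerator and denominator blow up at the common rate $\abs{\Press(\phi_r)}^{-1}$ times a continuous function of $r$, and their ratio converges to
\[
\int f\,d\nu_R \coloneqq \frac{\sum_{i=1}^I T_i(g,R)}{\sum_{i=1}^I T_i(1_{[E_*]},R)}.
\]

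For merely continuous $f$, approximate uniformly by H\"older functions $f_k$; the bound $\abs{\int f\,d\nu_r - \int f_k\,d\nu_r}\leq \norm{f-f_k}_{C^0}$, uniform in $r$, allows one to pass to the limit by a standard $3\epsilon$ argument. To see that $\nu_R$ is supported on $\partial\Gamma$, note that for any $x\in\Gamma$
\[
\nu_r(\{x\}) = \frac{H_r(e,x)}{\eta(r)} \leq \frac{C\,H_R(e,x)}{\eta(r)} \to 0
\]
by Harnack inequalities and~\eqref{eq_bound_Hr_crude}, so $\nu_R$ assigns mass zero to every point of $\Gamma$, and by countable additivity to all of $\Gamma$. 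The main subtle point is ensuring that the numerator and denominator diverge at exactly the same rate $\abs{\Press(\phi_r)}^{-1}$, which is precisely what Theorem~\ref{thm_Pressi_indep} delivers; without that comparison of pressures across components, the different maximal components of $\phi_R$ could contribute at different rates and the ratio might fail to have a limit.
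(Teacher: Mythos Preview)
Your proof is correct and follows essentially the same route as the paper's: express $\nu_r(f)$ via $\sum_n \boL_{\phi_r}^n(1_{[E_*]}\cdot f\circ\alpha_*)(\emptyset)$, use the spectral expansion from Proposition~\ref{prop_spectral_perturb} to write numerator and denominator as $\sum_i T_i/\abs{\Press_i(\phi_r)}+O(1)$, and invoke Theorem~\ref{thm_Pressi_indep} to see that all these terms share the same rate $\abs{\Press(\phi_r)}^{-1}$ so that the ratio converges. Two cosmetic remarks: the bound $H_r(e,x)\leq H_R(e,x)$ is just monotonicity in $r$, not Harnack; and ``$f$ H\"older'' should really mean ``$f\circ\alpha_*\in\boH^\hol$'', which is the dense class the paper singles out.
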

\begin{proof}
If $\nu_r$ converges weakly, then the limiting measure can give no
weight to $\Gamma$, since $\nu_r(x) = H_r(e,x)/\sum_{y\in \Gamma}
H_r(e,y)$ tends to $0$ by~\eqref{eq_bound_Hr_crude}.

Therefore, we just have to prove the convergence of $\nu_r(f)$ for
any continuous function, or even for $f$ in a dense set of functions.
We will consider those $f$ such that the function $\tilde f$ defined
on $\Sigma^*$ by $\tilde f(\omega) = f(\alpha_*(\omega))$ belongs to
$\boH^\hol$. For such a function, we have
  \begin{equation*}
  \sum_{x\in \Gamma} H_r(e,x) f(x) = H_r(e,e) \sum_{n\in \N} \boL_{\phi_r}^n
  (1_{[E_*]}\tilde f)(\emptyset).
  \end{equation*}
Using the spectral description of
Proposition~\ref{prop_spectral_perturb}, we deduce that this can be
written as
  \begin{equation*}
  \sum_{i=1}^I c_{\tilde f}(i,r)/\abs{\Press_i(\phi_r)} + O(1),
  \end{equation*}
as in~\eqref{eq_sumHrasymp}, for some functions $c_{\tilde f}(i,r)$
that extend continuously up to $r=R$. Therefore,
by~\eqref{eq_asymp_eta_Press},
  \begin{equation*}
  \nu_r(f) = \frac{\sum_{i=1}^I c_{\tilde f}(i,r)/\abs{\Press_i(\phi_r)} + O(1)}{\tau_2(r)/\abs{\Press(\phi_r)}+O(1)}.
  \end{equation*}
Since all the quantities $\abs{\Press_i(\phi_r)}$ are asymptotic to
$\abs{\Press(\phi_r)}$ by Theorem~\ref{thm_Pressi_indep} and tend to
$0$, this converges when $r$ tends to $R$ (to $\sum_{i=1}^I c_{\tilde
f}(i,R)/\tau_2(R)$).
\end{proof}
\begin{rmk}
One can easily check that the measure $\nu_R$ in
Proposition~\ref{prop_nur_converges} is the same as the measure we
constructed in Subsection~\ref{subsec_integ_indep} and was already
denoted by $\nu_R$. This will have no importance for our purposes.
\end{rmk}

To estimate $\Phi_r$ (defined in ~\eqref{eq_def_Phir}), let us first
note the following estimate.
\begin{lem}
\label{lem_Phir_basic}
We have
  \begin{equation*}
  \Phi_r(x) \leq C(1+\lgth{x}) \eta(r).
  \end{equation*}
\end{lem}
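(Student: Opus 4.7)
My plan is to mimic the decomposition already used in the proof of Lemma~\ref{lem_C0bound}, but without the exponentially decaying weight. Fix a geodesic segment $\gamma$ from $e$ to $x$, with $\gamma(n)$ the point at distance $n$ from $e$. For each $y\in\Gamma$, let $\gamma(n(y))$ be its (nearest-point) projection onto $\gamma$, so that $n(y)\in\{0,1,\dotsc,\lgth{x}\}$, and write $\Gamma_n=\{y : n(y)=n\}$.

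The key pointwise bound is the one already exploited in Lemma~\ref{lem_C0bound}. For $y\in\Gamma_n$, a geodesic from $e$ to $y$ and a geodesic from $y$ to $x$ both pass within bounded distance of $\gamma(n)$ (standard hyperbolic geometry, e.g.\ via tree approximation). Applying the Ancona inequalities of Theorem~\ref{thm_ancona} twice gives
\begin{equation*}
G_r(e,y)\,G_r(y,x) \leq C\, G_r(e,\gamma(n))\,G_r(\gamma(n),y)\,G_r(y,\gamma(n))\,G_r(\gamma(n),x).
\end{equation*}
Since $\gamma(n)$ lies on the geodesic from $e$ to $x$, the trivial inequality~\eqref{eq_trivial_ancona} gives $G_r(e,\gamma(n))G_r(\gamma(n),x)\leq G_R(e,e)\,G_r(e,x)$. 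Using the notation $H_r$ from~\eqref{eq_defHr}, we obtain
\begin{equation*}
G_r(e,y)\,G_r(y,x) \leq C\, G_r(e,x)\, H_r(\gamma(n),y).
\end{equation*}

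Summing first over $y\in\Gamma_n$ and then over $n$, we get
\begin{equation*}
\sum_{y\in \Gamma} G_r(e,y)\,G_r(y,x) \leq C\, G_r(e,x)\, \sum_{n=0}^{\lgth{x}}\, \sum_{y\in\Gamma_n} H_r(\gamma(n),y).
\end{equation*}
By left-invariance of $G_r$ (and hence of $H_r$) on $\Gamma$, for each fixed $n$,
\begin{equation*}
\sum_{y\in\Gamma_n} H_r(\gamma(n),y) \leq \sum_{y\in\Gamma} H_r(\gamma(n),y) = \sum_{z\in\Gamma} H_r(e,z) = \eta(r).
\end{equation*}
Hence the right-hand side is bounded by $C(1+\lgth{x})\,\eta(r)\, G_r(e,x)$. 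Dividing by $G_r(e,x)$ yields the claim $\Phi_r(x)\leq C(1+\lgth{x})\eta(r)$.

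There is no real obstacle: all the ingredients (Ancona inequalities, left-invariance of $H_r$, the finiteness of $\eta(r)$, and the fact that the projection onto a geodesic takes at most $\lgth{x}+1$ values) are already available. The only mild care needed is the geometric step asserting that geodesics from $e$ to $y$ and from $y$ to $x$ pass uniformly close to $\gamma(n(y))$, which is a standard property of projections in $\delta$-hyperbolic spaces and is exactly the hypothesis under which Ancona inequalities apply.
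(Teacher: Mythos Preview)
Your proof is correct and essentially identical to the paper's own argument: the paper also projects onto a geodesic $\gamma$ from $e$ to $x$, applies Ancona inequalities to get $G_r(e,y)G_r(y,x)\leq C\,H_r(\gamma(n),y)\,G_r(e,x)$ for $y\in\Gamma_n$, and then sums over $n\in\{0,\dotsc,\lgth{x}\}$ using $\sum_y H_r(\gamma(n),y)\leq\eta(r)$. The only cosmetic difference is that you make explicit the use of~\eqref{eq_trivial_ancona} to pass from $G_r(e,\gamma(n))G_r(\gamma(n),x)$ to $C\,G_r(e,x)$, whereas the paper absorbs this into a single chain of inequalities.
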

\begin{proof}
The proof relies on the same argument as Lemma~\ref{lem_C0bound}.
Denote by $\gamma$ a geodesic segment from $e$ to $x$, and by
$\Gamma_n$ (for $0\leq n\leq \lgth{x}$) the set of points whose first
projection on $\gamma$ is the point $\gamma(n)$, at distance $n$ of
$e$. For $y\in \Gamma_n$, one has by Ancona inequalities
  \begin{equation*}
  G_r(e,y) G_r(y,x) \leq C G_r(e,\gamma(n))G_r(\gamma(n), y) G_r(y,\gamma(n)) G_r(\gamma(n), x)
  \leq C H_r(\gamma(n), y) G_r(e,x).
  \end{equation*}
Therefore,
  \begin{equation*}
  \Phi_r(x) = \sum_{n=0}^{\lgth{x}} \sum_{y\in \Gamma_n} \frac{G_r(e,y) G_r(y,x)}{G_r(e,x)}
  \leq C \sum_{n=0}^{\lgth{x}} \sum_{y\in \Gamma_n} H_r(\gamma(n), y)
  \leq C(\lgth{x}+1)\eta(r).
  \qedhere
  \end{equation*}
\end{proof}

To obtain a convergence instead of bounds, we will use a similar
argument, but we will need to replace the wild sets $\Gamma_n$ by a
nicer version given by partitions of unity, as in lemma 8.5 of
\cite{gouezel_lalley} (that we recall for the convenience of the
reader):
\begin{lem}
For $K$ large enough, we can associate to any geodesic segment
$\gamma$ in the Cayley graph of length $2K+1$ centered around $e$ a
function $\kappa_\gamma: \Gamma \to [0,1]$ with the following
properties:
\begin{enumerate}
\item The function $\kappa_\gamma$ extends continuously to
    $\Gamma\cup
    \partial \Gamma$.
\item Let $\pi_\gamma(y)$ be the set of points on $\gamma$ that
    are closest to $y\in \Gamma$. Then $\kappa_\gamma(y)=0$ if
    $\pi_\gamma(y)$ contains a point at distance $\geq K/4$ of
    $e$.
\item Let $\gamma'$ be any biinfinite geodesic passing through
    $e$. Adding the functions $\kappa_\gamma$ along the
    subsegments of $\gamma'$ of length $2K+1$ one gets the
    function identically equal to $1$. More formally, for all
    $y\in \Gamma$,
  \begin{equation}
  \label{eq_kappa_somme}
  \sum_{n\in \Z} \kappa_{\gamma'(n)^{-1}\gamma'[n-K, n+K]}(\gamma'(n)^{-1}y)=1.
  \end{equation}
\end{enumerate}
\end{lem}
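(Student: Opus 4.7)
The plan is to produce $\kappa_\gamma$ as $\chi \circ p_\gamma$, where $\chi: \R \to [0,1]$ is the tent function $\chi(t) = \max(0, 1-|t|)$, satisfying the partition of unity identity $\sum_{n\in \Z} \chi(t-n) = 1$ for every $t \in \R$, and $p_\gamma: \Gamma \cup \partial\Gamma \to [-K, K]$ is a continuous signed projection of $y$ onto $\gamma$, measured from the center $e$. A natural candidate is
\[
p_\gamma(y) = \tfrac{1}{2}\bigl(d(y, \gamma(-K)) - d(y, \gamma(K))\bigr),
\]
which in a tree gives exactly the signed distance from $e$ to the projection of $y$ on $\gamma$, and in a $\delta$-hyperbolic space is correct up to an additive $O(\delta)$.

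For property~(1), $p_\gamma$ is $1$-Lipschitz on $\Gamma$ and differences of distance functions extend continuously to $\partial \Gamma$ in any Gromov-hyperbolic space (this is the standard horofunction convergence), so $\kappa_\gamma$ extends continuously to $\Gamma \cup \partial \Gamma$. For property~(2), if $\kappa_\gamma(y) \neq 0$ then $\abs{p_\gamma(y)} < 1$; applying Theorem~\ref{thm:tree_approx} to $\{e, \gamma(-K), \gamma(K), y, z\}$ for any closest point $z$ of $y$ on $\gamma$, and using $\delta$-thinness, forces $d(e, z) \leq C(\delta)$, which is less than $K/4$ provided $K$ is chosen sufficiently large.

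Property~(3), the exact partition of unity along any biinfinite geodesic $\gamma'$ through $e$, is the heart of the matter. By left-invariance of the metric, the $n$-th summand equals $\chi(p_n)$ with
\[
p_n = \tfrac{1}{2}\bigl(d(y, \gamma'(n-K)) - d(y, \gamma'(n+K))\bigr).
\]
In a tree, $p_n = q(y) - n$ for some half-integer $q(y)$ depending only on $y$ and $\gamma'$, so the tent identity immediately yields $\sum_n \chi(p_n) = 1$. The main obstacle is that in a general hyperbolic Cayley graph this identity holds only up to an $O(\delta)$ error per index, and the naive construction gives only $\sum_n \chi(p_n) \approx 1$. The fix, which is the actual content of Lemma~8.5 of~\cite{gouezel_lalley}, is to replace the real-valued $p_\gamma$ by an integer-valued cocycle built from a canonical choice of closest vertex on $\gamma$ (with a deterministic tie-breaking rule), and to take $\kappa_\gamma$ as the indicator that this canonical closest index equals~$0$. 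Together with property~(2), only a bounded number of indices $n$ along $\gamma'$ contribute to the sum; each sub-segment $\gamma'[n-K, n+K]$ captures the full biinfinite closest structure on its interior because the projection lies far from its endpoints, so the partition becomes exact. Continuity at $\partial \Gamma$ then follows from checking that the canonical closest-vertex assignment stabilises along any sequence $y_i \to \xi$, which holds for $K$ large with respect to $\delta$.
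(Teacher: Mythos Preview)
Your proposal correctly identifies the central tension: the tent-function construction $\kappa_\gamma=\chi\circ p_\gamma$ is manifestly continuous-looking but gives only an approximate partition of unity, whereas the closest-vertex indicator gives an exact partition (your argument for property~(3) via the segment--versus--line comparison is essentially right) but is $\{0,1\}$-valued. The gap is that you do not actually establish property~(1) for either construction.

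For the indicator, a $\{0,1\}$-valued function extends continuously to $\Gamma\cup\partial\Gamma$ only if it is eventually constant along every sequence $y_i\to\xi$. You assert that ``the canonical closest-vertex assignment stabilises along any sequence $y_i\to\xi$, which holds for $K$ large with respect to $\delta$'', but give no argument, and enlarging $K$ cannot help: nearest-point projection onto a geodesic in a $\delta$-hyperbolic space is well-defined only up to an additive $O(\delta)$, independently of $K$. For a boundary point $\xi$ whose projection onto $\gamma$ falls near the center $e$, the closest-vertex index of $y_i$ can genuinely oscillate among several integers in a fixed $O(\delta)$-window as $i\to\infty$, so the indicator oscillates between $0$ and $1$ and no continuous extension exists. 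A deterministic tie-breaking rule does not cure this, since the oscillation is in the set $\pi_\gamma(y_i)$ itself, not merely in the choice within it.

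For the tent approach your continuity claim is also unjustified: you invoke ``standard horofunction convergence'', but in a general Gromov-hyperbolic space the horofunction boundary can be strictly finer than the Gromov boundary, so $y\mapsto d(y,a)-d(y,b)$ need not have a limit along $y_i\to\xi$ (it is only determined up to $O(\delta)$). Thus $p_\gamma$ itself may fail to extend continuously.

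The paper does not reproduce the proof (it cites Lemma~8.5 of \cite{gouezel_lalley}), so there is nothing to compare line-by-line, but the point is that the actual construction must reconcile exactness of~\eqref{eq_kappa_somme} with continuity at $\partial\Gamma$, and neither of your two candidates does this as written. One way to repair the argument is to start from a continuous, segment-determined bump $\psi_\gamma$ with support controlled as in~(2) (for instance built from projection in an ambient $\HH^m$ via the Bonk--Schramm embedding already used in Lemma~\ref{lem_avoidballs}, where nearest-point projection is genuinely continuous), and then \emph{normalise}: set $\kappa_\gamma=\psi_\gamma/\sum_n\psi_{\sigma^n\gamma'}$ and check that the denominator, having only boundedly many nonzero terms all indexed near the projection, is itself determined by a segment of length $2K+1$ once $K$ is taken large enough. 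Your write-up contains neither the continuity verification nor a normalisation step, so as it stands the proof is incomplete.
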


Let us now define for $r\in [1,R)$ a function $\Psi_r$ on geodesic
segments $\gamma$ through $e$, as follows. Let $a$ and $b$ be the
endpoints of $\gamma$. If $d(e,a)\leq K$ or $d(e,b)\leq K$, let
$\Psi_r(\gamma)=0$. Otherwise, let
  \begin{equation*}
  \Psi_r(\gamma)=\eta(r)^{-1}\sum_{y\in \Gamma}\kappa_{\gamma[-K, K]}(y)G_r(a,y)G_r(y,b)/G_r(a,b).
  \end{equation*}
Consider a geodesic segment $\gamma$ from $e$ to a point $x$, and
denote by $\sigma^n \gamma$ the shifted segment, i.e.,
$\gamma(n)^{-1} \gamma$. Then we have
  \begin{equation}
  \label{eq_xoiuwvpoiuxcv}
  \Phi_r(x) = \eta(r) \sum_{n=0}^{\lgth{x}} \Psi_r(\sigma^n \gamma) + O(\eta(r)).
  \end{equation}
Indeed, by~\eqref{eq_kappa_somme}, when one adds all the quantities
$\Psi_r(\sigma^n \gamma)$, one counts every point in the group with a
coefficient $1$, excepted those whose projection on $\gamma$ is close
to $e$ or $x$. They contribute to the sum by an amount at most
$C\eta(r)$, as explained in the proof of Lemma~\ref{lem_Phir_basic}.

\begin{lem}
\label{lem_Psir_holder}
The functions $\Psi_r$ are uniformly bounded and H\"older-continuous
for $r\in [1,R)$. They converge uniformly when $r$ tends to $R$.
\end{lem}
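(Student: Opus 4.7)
The plan is to establish the three assertions in turn using progressively stronger tools: plain Ancona inequalities for uniform boundedness, strong Ancona (Theorem~\ref{thm_ancona_strong}) for H\"older regularity, and Lemma~\ref{lem_C0bound} together with the weak convergence $\nu_r\to\nu_R$ from Proposition~\ref{prop_nur_converges} for uniform convergence as $r\to R$.

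Uniform boundedness follows from a termwise Ancona estimate. When $\kappa_{\gamma[-K,K]}(y)\neq 0$, the projection of $y$ on $\gamma$ lies within $K/4$ of $e$, so $e$ is close to each of the geodesics $[a,y]$, $[y,b]$, $[a,b]$. Three applications of Ancona inequalities give
\[
 G_r(a,y)G_r(y,b) \leq C\, G_r(a,e)\,G_r(e,b)\,H_r(e,y),
 \qquad
 G_r(a,b) \geq C^{-1}\,G_r(a,e)\,G_r(e,b),
\]
and dividing, using $\kappa\leq 1$, and summing over $y$ yields $\Psi_r(\gamma) \leq C\eta(r)^{-1}\sum_y H_r(e,y) = C$, uniformly in $r$ and $\gamma$.

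For H\"older continuity I would parametrize geodesic segments through $e$ by their endpoint pairs (identifying long finite segments with their boundary limits), equipped with the visual metric $d(\gamma,\gamma')=2^{-n}$ when $\gamma,\gamma'$ coincide on $[-n,n]$. If $n>K$ the $\kappa$-factors coincide; writing $F_r^\gamma(y):=G_r(a,y)G_r(y,b)/G_r(a,b)$, the quotient $F_r^\gamma(y)/F_r^{\gamma'}(y)$ rearranges, after inserting $e$ as a reference, into two strong-Ancona cross-ratios---coming from the configurations $(a,a',y,e)$ and $(b,b',y,e)$ with separating distance $n-O(K)$---times a residual factor $c_r^{a,b}/c_r^{a',b'}$ involving the Gromov-product-like quantity $c_r^{a,b}:=G_r(a,e)G_r(e,b)/G_r(a,b)$. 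The first two cross-ratios are $1+O(e^{-\rho n})$ directly from strong Ancona; for the residual factor I would use the cross-ratio $G_r(a,b)G_r(a',b')/(G_r(a',b)G_r(a,b'))=1+O(e^{-\rho n})$ (strong Ancona applied to $(a,a',b,b')$) together with complementary strong-Ancona comparisons at the reference point $e$. Combined with the bound $F_r^\gamma(y)\leq C H_r(e,y)$ from the boundedness step, summation then gives $|\Psi_r(\gamma)-\Psi_r(\gamma')|\leq C e^{-\rho n}$ uniformly in $r$.

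For uniform convergence I would rewrite $\Psi_r(\gamma)=\int \Xi_r^\gamma\,d\nu_r$ with $\Xi_r^\gamma(y)=\kappa_{\gamma[-K,K]}(y)F_r^\gamma(y)/H_r(e,y)$, and factor $\Xi_r^\gamma=\kappa\cdot c_r^{a,b}\cdot\alpha_r^a(y)\cdot\beta_r^b(y)$ where $\alpha_r^a(y)=G_r(a,y)/(G_r(a,e)G_r(e,y))$ and $\beta_r^b$ is analogous. Each factor is uniformly bounded by Ancona and extends continuously to $\Gamma\cup\partial\Gamma$ via the convergence of Martin kernels. The Green-function ratios appearing in $\alpha_r^a$, $\beta_r^b$ and $c_r^{a,b}$ are exactly those controlled by Lemma~\ref{lem_C0bound}, which yields $\Xi_r^\gamma\to\Xi_R^\gamma$ uniformly on $\Gamma\cup\partial\Gamma$ for each fixed $\gamma$. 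Combined with the weak convergence $\nu_r\to\nu_R$, this gives pointwise convergence $\Psi_r(\gamma)\to\Psi_R(\gamma)$, and the H\"older estimate above together with compactness of the parameter space then upgrades this to uniform convergence by Arzel\`a--Ascoli. The main obstacle is the H\"older step, specifically controlling the Gromov-product-like factor $c_r^{a,b}/c_r^{a',b'}$ up to $1+O(e^{-\rho n})$ rather than just by bounded ratios: this requires the careful strong-Ancona reduction outlined above rather than merely plain Ancona at the reference point.
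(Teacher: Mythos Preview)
Your proposal is correct and follows essentially the same approach as the paper: boundedness via Ancona, H\"older regularity via strong Ancona cross-ratios (which the paper simply defers to \cite[Lemma~8.6]{gouezel_lalley}), and pointwise convergence by writing $\Psi_r(\gamma)$ as an integral against $\nu_r$ of a function controlled by Lemma~\ref{lem_C0bound}, then upgraded to uniform convergence by equicontinuity. The paper packages the convergence step slightly more cleanly by writing $G_r(a,b)\Psi_r(\gamma)=\int f_r\,\dd\nu_r$ with $f_r(y)=\kappa_{\gamma[-K,K]}(y)\,G_r(a,y)G_r(y,b)/H_r(e,y)$, which avoids your factorization and the separate treatment of $c_r^{a,b}$, but the content is the same.
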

By H\"older continuous, we mean that, if two geodesics $\gamma$ and
$\gamma'$ coincide on a ball of size $n$ around $e$, then
$\abs{\Psi_r(\gamma)-\Psi_r(\gamma')} \leq Ce^{-\rho n}$ for some
$\rho>0$.
\begin{proof}
This is essentially Lemma~8.6 in \cite{gouezel_lalley}. The uniform
H\"older continuity is proved there and relies uniquely on strong
Ancona inequalities. On the other hand, the proof of the convergence
when $r\to R$ has to be modified slightly due to the presence of
several maximal components.

Since the functions $\Psi_r$ are uniformly H\"older continuous, it is
sufficient to show that they converge simply to get uniform
convergence. Fix some geodesic segment $\gamma$ through $e$, with
endpoints $a$ and $b$ at distance at least $K$ of $e$. We have
  \begin{equation*}
  G_r(a,b) \Psi_r(\gamma) = \frac{1}{\eta(r)}\sum_{y\in \Gamma}\kappa_{\gamma[-K, K]}(y)\frac{G_r(a,y)G_r(y,b)}{G_r(e,y)G_r(y,e)} H_r(e,y)
  = \int f_r(y) \dd\nu_r,
  \end{equation*}
where $f_r(y) = \kappa_{\gamma[-K,
K]}(y)\frac{G_r(a,y)G_r(y,b)}{G_r(e,y)G_r(y,e)}$. This is a function
on $\Gamma$ that extends continuously to $\partial \Gamma$ by the
strong Ancona inequalities (and since $\kappa_{\gamma[-K, K]}$ is
continuous). Moreover, $f_r$ converges uniformly to a function $f_R$
when $r$ tends to $R$, by Lemma~\ref{lem_C0bound}. Since $\nu_r$
converges weakly by Proposition~\ref{prop_nur_converges}, it follows
that $\int f_r \dd\nu_r$ converges.
\end{proof}

\begin{lem}
There exists a family of functions $h_r$ on $\bSigma$ for $r\in
[1,R)$ with the following properties:
\begin{enumerate}
\item The functions $h_r$ are H\"older continuous, and they
    converge in the H\"older topology to a function $h_R$ when
    $r\to R$.
\item For any $\omega\in \Sigma^*$ of length $n$,
  \begin{equation}
  \label{eq_PhirasSn}
  \Phi_r(\alpha_*(\omega))
  =\eta(r) S_n h_r(\omega) + O(\eta(r)).
  \end{equation}
\end{enumerate}
\end{lem}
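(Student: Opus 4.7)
The plan is to leverage~\eqref{eq_xoiuwvpoiuxcv} together with Lemma~\ref{lem_Psir_holder}: the expression $\eta(r)\sum_{k=0}^{\lgth{x}}\Psi_r(\sigma^k\gamma)$ already equals $\Phi_r(x)$ up to $O(\eta(r))$, and $\Psi_r$ has the right uniform H\"older bound and convergence. What remains is to transfer $\Psi_r$, defined on geodesic segments through $e$ in $\Gamma$, to a function $h_r$ on the symbolic space $\bSigma$ so that Birkhoff sums of $h_r$ along $\omega$ recover $\sum_k\Psi_r(\sigma^k\gamma)$ up to an $O(1)$ error.

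I would fix once and for all a boundary point $\zeta\in\partial\Gamma$. For $\omega\in\bSigma$, let $\gamma_\omega$ be the geodesic through $e$ with backward endpoint $\zeta$ and forward endpoint $\alpha_*(\omega)\in\Gamma\cup\partial\Gamma$, and set $h_r(\omega):=\Psi_r(\gamma_\omega)$. When an endpoint lies on the boundary, the ratios $G_r(a,y)G_r(y,b)/G_r(a,b)$ appearing in the definition of $\Psi_r$ are interpreted via their limits; strong Ancona inequalities (Theorem~\ref{thm_ancona_strong}) combined with Lemma~\ref{lem_C0bound} ensure that these limits exist uniformly in $r\in[1,R]$.

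Properties (1) and (2) transfer directly from Lemma~\ref{lem_Psir_holder}. Two paths $\omega,\omega'\in\bSigma$ agreeing up to time $n$ give geodesics $\gamma_\omega,\gamma_{\omega'}$ with the same backward extension to $\zeta$ and forward extensions agreeing within $B(e,n)$, so uniform H\"older continuity of $\Psi_r$ yields $|h_r(\omega)-h_r(\omega')|\leq Ce^{-\rho n}$, uniformly in $r$. The uniform $C^0$ convergence $h_r\to h_R$ reduces to the corresponding property of $\Psi_r$; combined with the uniform H\"older bound, convergence in $\boH^\hol$ follows via interpolation~\eqref{eq_relate_holder}.

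For property (3), consider $\omega\in\Sigma^*$ of length $n$ with $x=\alpha_*(\omega)$ and $\gamma$ the geodesic in $\Gamma$ from $e$ to $x$. The forward endpoints of $\sigma^k\gamma$ and $\gamma_{\sigma^k\omega}$ coincide (both equal $\alpha_*(\sigma^k\omega)=\gamma(k)^{-1}x$); the only discrepancy is in the backward endpoint, where $\sigma^k\gamma$ uses the finite point $\gamma(k)^{-1}e$ (at distance $k$, in a direction depending on $\omega_0,\dots,\omega_{k-1}$), while $\gamma_{\sigma^k\omega}$ uses the fixed $\zeta$. The hard part will be controlling this backward discrepancy, since strong Ancona does not directly compare two points at infinity lying in different directions from $e$. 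The resolution is the ordinary Ancona factorization through $e$: for any $a$ at distance $\geq k$ on the backward side and $b=\gamma(k)^{-1}x$ on the forward side, Ancona gives $G_r(a,y)G_r(y,b)/G_r(a,b)=G_r(e,y)G_r(y,b)/G_r(e,b)\cdot(1+O(e^{-\rho k}))$, with error independent of the direction of $a$. Hence $|\Psi_r(\sigma^k\gamma)-h_r(\sigma^k\omega)|=O(e^{-\rho k})$, and summing over $k$ gives $O(1)$. Noting that $\Psi_r(\sigma^n\gamma)=0$ (its forward endpoint is $e$), we obtain~\eqref{eq_PhirasSn}.
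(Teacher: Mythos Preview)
Your approach has a genuine gap in the comparison of $\Psi_r(\sigma^k\gamma)$ with $h_r(\sigma^k\omega)$. The claimed bound $|\Psi_r(\sigma^k\gamma)-h_r(\sigma^k\omega)|=O(e^{-\rho k})$ does not follow from what you wrote, and in fact each such difference is only $O(1)$, so the sum over $k$ is $O(n)$ rather than $O(1)$.

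Two things go wrong. First, $\Psi_r(\gamma)$ depends on the whole local segment $\gamma[-K,K]$ through the cutoff $\kappa_{\gamma[-K,K]}$, not only on the endpoints $a,b$. The backward half of $(\sigma^k\gamma)[-K,K]$ is determined by $\omega_{k-K},\dots,\omega_{k-1}$, whereas the backward half of your $\gamma_{\sigma^k\omega}[-K,K]$ points toward the fixed $\zeta$; these are different geodesic segments through $e$, and your argument ignores this discrepancy entirely. Second, even restricting attention to the endpoint $a$, neither ordinary nor strong Ancona gives the error you claim. Ordinary Ancona yields $G_r(a,y)/G_r(a,b)\asymp G_r(e,y)/G_r(e,b)$ only up to a fixed multiplicative constant, with no decay in $k$. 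And strong Ancona~\eqref{eq_strong_ancona} would require the pair $\{a,e\}$ to be separated from the pair $\{y,b\}$ in a tree approximation; but here $y$ ranges over points whose projection on $\gamma$ lies within $K/4$ of $e$, so in the tree for $\{a,e,y,b\}$ all branch points sit within $O(1)$ of $e$, and the separation is $O(1)$, not $k$.

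The underlying obstruction is that $\Psi_r(\sigma^k\gamma)$ genuinely depends (with exponentially decaying but nonzero weight) on the past $\omega_0,\dots,\omega_{k-1}$; a single fixed backward direction $\zeta$ cannot absorb this. The paper handles this by the standard device from thermodynamic formalism: it lifts $\Psi_r$ to a H\"older function $g_r$ on the \emph{bilateral} shift $\bSigma_{\Z}$ (where the past is part of the configuration), then invokes the coboundary reduction of \cite[Proposition~1.2]{parry-pollicott} to write $g_r=h_r+u_r-u_r\circ\sigma_{\Z}$ with $h_r$ depending only on nonnegative coordinates. The Birkhoff sums of $g_r$ and $h_r$ then differ by a telescoping coboundary, hence by $O(1)$, and $h_r$ descends to $\bSigma$. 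There is also a minor definitional issue with your $\gamma_\omega$ when $\alpha_*(\omega)$ lies close to $\zeta$ (no geodesic from $\zeta$ to $\alpha_*(\omega)$ then passes near $e$), but this is secondary to the gap above.
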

We recall that $S_n h_r$ is the Birkhoff sum
$\sum_{k=0}^{n-1}h_r\circ \sigma^k$.
\begin{proof}
We will need to work with the bilateral shift $\sigma_{\Z}$ on the
space $\bSigma_{\Z}$ of bilateral paths in the automaton (that may be
infinite in zero, one or both directions). We define a function $g_r$
on the set $\Sigma^*_\Z$ of finite paths by $g_r(\omega)=0$ if the
length of $\omega$ in the future or in the past is less than $K$, and
$g_r(\omega) = \Psi_r(\alpha(\omega))$ otherwise, where
$\alpha(\omega)$ is the geodesic segment going through
  \begin{equation*}
  \dotsc,  \alpha(\omega_{-1})^{-1}\alpha(\omega_{-2})^{-1},
  \alpha(\omega_{-1})^{-1}, e, \alpha(\omega_0), \alpha(\omega_0)\alpha(\omega_1), \dotsc.
  \end{equation*}
Lemma~\ref{lem_Psir_holder} ensures that the functions $g_r$ are
H\"older continuous, and that they converge uniformly when $r$ tends to
$R$. By~\eqref{eq_relate_holder}, they also converge in some H\"older
topology. Moreover, they extend to H\"older continuous functions on
$\bSigma_{\Z}$.

Consider now a finite path $\omega$ in $\Sigma^*$, of length $n$. One
may consider it as a path in $\Sigma^*_{\Z}$ with empty coordinates
for negative time. The equation~\eqref{eq_xoiuwvpoiuxcv} reads
  \begin{equation*}
  \Phi_r(\alpha_*(\omega)) = \eta(r)\sum_{k=0}^n g_r(\sigma_{\Z}^k \omega) + O(\eta(r)).
  \end{equation*}

This is almost the required property, but the function $g_r$ is
defined on the bilateral shift instead of the unilateral shift as
desired. This problem is solved using a classical coboundary trick:
for any H\"older continuous function $g$ on $\bSigma_{\Z}$, there exist
two H\"older continuous functions $h$ and $u$ on $\bSigma_{\Z}$ (for a
smaller H\"older exponent) such that $g=h+u-u\circ \sigma_{\Z}$, and
$h$ only depends on positive coordinates. Moreover, $h$ and $u$
depend linearly (and continuously) on $g$. This is Proposition 1.2 in
\cite{parry-pollicott}. The proof is given there for subshifts where
one only allows infinite paths, but it readily adapts to the
situation where finite paths are allowed (or one can reduce to the
infinite paths situation by adding two cemeteries, one for the past
and one for the future).

Writing $g_r=h_r+u_r-u_r\circ \sigma_{\Z}$ as above, we obtain
  \begin{equation*}
  \Phi_r(\alpha_*(\omega))
  = \eta(r)\sum_{k=0}^n h_r(\sigma_{\Z}^k \omega)
    + \eta(r) (u_r(\omega) - u_r(\sigma_{\Z}^{n+1}\omega)) + O(\eta(r)).
  \end{equation*}
Since $u_r$ is uniformly bounded, this is the desired decomposition.
\end{proof}

\begin{proof}[Proof of Proposition~\ref{prop_triple_sum}]
The sum in~\eqref{eq_triplesum} can be written as $\sum_{x\in \Gamma}
H_r(e,x) \Phi_r(x)$. Since $\alpha_*$ induces a bijection between the
finite paths in the automaton starting from $s_*$ and the group, this
is equal to $\sum H_r(e, \alpha_*(\omega)) \Phi_r(\alpha_*(\omega))$,
where the sum is restricted to those paths with $\omega_0\in E_*$.

Consider now, in this sum, the contribution of paths of length $n$.
By definition of the transfer operator $\boL_{\phi_r}$, it is equal
to $H_r(e,e) \boL_{\phi_r}^n (1_{[E_*]} \cdot \Phi_r\circ
\alpha_*)(\emptyset)$. Using the decomposition~\eqref{eq_PhirasSn}
for $\Phi_r\circ \alpha_*$, we get
  \begin{equation*}
  \sum_{x,y\in \Gamma} G_r(e,y)G_r(y,x)G_r(x,e) = \eta(r) \sum_{n\in \N}\boL_{\phi_r}^n (1_{[E_*]}S_n u_r + O(1))(\emptyset).
  \end{equation*}
The contribution of the error term $O(1)$ in this equation is bounded
by $\eta(r) \sum \norm{\boL_{\phi_r}^n 1} \leq C
\eta(r)/\abs{\Press(\phi_r)} \leq C \eta(r)^2$. It is therefore
compatible with the error term in the statement of
Proposition~\ref{prop_triple_sum}.

Since $\boL_{\phi_r}(u\cdot v\circ \sigma)=v \boL_{\phi_r} u$, we
have $\boL_{\phi_r}^n (1_{[E_*]}S_n u_r) = \sum_{k=1}^{n}
\boL_{\phi_r}^k (u_r \boL_{\phi_r}^{n-k} 1_{[E_*]})$. Therefore, the
previous equation becomes
  \begin{equation*}
  \eta(r) \sum_{n=0}^\infty \sum_{k=1}^n \boL_{\phi_r}^k (u_r \boL_{\phi_r}^{n-k} 1_{[E_*]})(\emptyset) + O(\eta(r)^2)
  = \eta(r) \sum_{k=1}^\infty \boL_{\phi_r}^k\left(u_r \sum_{\ell=0}^\infty \boL_{\phi_r}^\ell 1_{[E_*]}\right)(\emptyset) + O(\eta(r)^2).
  \end{equation*}
Using the spectral description of
Proposition~\ref{prop_spectral_perturb}, one can write
  \begin{equation*}
  \sum_{\ell=0}^\infty \boL_{\phi_r}^\ell 1_{[E_*]} = \sum_{i=1}^I
  f_{i,r} / \abs{\Press_i(\phi_r)} + O(1),
  \end{equation*}
for some H\"older continuous functions $f_{i,r}$ that converge when $r$
tends to $R$. Again, the $O(1)$ results in an error $O(\eta(r)^2)$ in
the final formula. It remains to understand $\sum_{k=1}^\infty
\boL_{\phi_r}^k(u_r f_{i,r})$. Using again the spectral description
of $\boL_{\phi_r}$, one may write it as $\sum_{j=1}^I
g_{i,j,r}/\abs{\Press_j(\phi_r)}$ for some H\"older continuous
functions $g_{i,j,r}$ that depend continuously on $r$. Finally, we
have obtained
  \begin{equation*}
  \sum_{x,y\in \Gamma} G_r(e,y)G_r(y,x)G_r(x,e) = \eta(r) \sum_{i=1}^I \sum_{j=1}^I
    \frac{g_{i,j,r}(\emptyset)}{\abs{\Press_i(\phi_r)} \abs{\Press_j(\phi_r)}}
  +O(\eta(r)^2).
  \end{equation*}
By Theorem~\ref{thm_Pressi_indep} and~\eqref{eq_asymp_eta_Press},
$\eta(r)$ is asymptotic to $\tau_2(r)/\abs{\Press_i(\phi_r)}$ for
some continuous function $\tau_2$ that admits a positive limit at
$r=R$. Since all the quantities $g_{i,j,r}(\emptyset)$ converge when
$r$ tends to $R$, the proposition follows.
\end{proof}

\subsection{Asymptotics of the Green function}

In this subsection, we prove Theorem~\ref{thm_strong_asymp}. We rely
on the asymptotics for $\sum G_r(e,y)G_r(y,x)G_r(x,e)$ that were
obtained in Proposition~\ref{prop_triple_sum} and the differential
equation for $G_r(e,e)$.

More precisely, define as in the proof of
Corollary~\ref{cor_crude_eta} a function $F(r) = r^2 \eta(r)$. It
satisfies $F'(r) = 2r \sum_{x,y} G_r(e,y) G_r(y,x)G_r(x,e)$. By
Proposition~\ref{prop_triple_sum}, $2F'(r)/F(r)^3$ converges to a
constant $c$ when $r$ tends to $R$. By
Proposition~\ref{prop_aprioridiff}, $c$ is nonzero. By integration,
it follows that $1/F(r)^2 - 1/F(R)^2 \sim c(R-r)$. Since
$F(R)=+\infty$, we get $F(r) \sim c^{-1/2}(R-r)^{-1/2}$. This proves
the desired asymptotics of $\partial G_r(e,e)/\partial r$.

Fix now a point $a\in\Gamma$, let us compute the asymptotics of
$\partial G_r(e,a)/\partial r$ or, equivalently, of $\sum_x G_r(e,x)
G_r(x,a)$. We write this sum as
  \begin{equation*}
  \sum_x H_r(e,x) \frac{G_r(x,a)}{G_r(x,e)}
  =\eta(r) \int_\Gamma f_r \dd\nu_r,
  \end{equation*}
where $f_r(x) = G_r(x,a)/G_r(x,e)$ and the measure $\nu_r$ has been
defined in~\eqref{eq_defnur}. The functions $f_r$ extend continuously
to $\Gamma\cup \partial \Gamma$ by the strong Ancona inequalities,
and converge uniformly when $r$ tends to $R$, by
Lemma~\ref{lem_C0bound}. Since $\nu_r$ converges weakly when $r\to R$
by Proposition~\ref{prop_nur_converges}, we deduce that $\int
f_r\dd\nu_r$ converges. Therefore, the asymptotics of $\partial
G_r(e,a)/\partial r$ follow from those of $\eta(r)$. \qed

\section{Asymptotics of transition probabilities}
\label{sec_tauberian}

Theorem~\ref{main_thm} follows directly from the asymptotics of the
Green function proved in Theorem~\ref{thm_strong_asymp} and from
Theorem~9.1 in \cite{gouezel_lalley}: this theorem shows that, for
\emph{symmetric} measures, one can read the behavior of transition
probabilities from the behavior of the Green function.

Let us explain quickly why symmetry matters. The Green function is
$\sum r^n p_n(x,y)$, its derivative is $\sum n r^{n-1} p_n(x,y)$. If
$\partial G_r(x,y)/\partial r \sim C (R-r)^{-1/2}$, it follows from
Karamata's tauberian theorem that
  \begin{equation*}
  \sum_{k=1}^n k R^k p_k(x,y) \sim C' n^{1/2}.
  \end{equation*}
This is a local limit theorem in Cesaro average. If $R^n p_n(x,y)$
were monotone, the desired asymptotics of $p_n(x,y)$ would follow
readily. Symmetry is used to obtain almost monotonicity: up to an
exponentially small error (that does not matter in the estimates),
$R^n p_n(x,y)$ is indeed decreasing when the random walk is aperiodic
and the measure is symmetric. This is a consequence of spectral
results for the (self-adjoint) Markov operator associated to the
random walk.

From Theorem~\ref{main_thm}, one can also derive asymptotics for the
first return probabilities. We describe the result in the aperiodic
case, the periodic one is handled similarly by looking at $\mu^2$.
\begin{prop}
\label{prop_first_return_estimates}
Consider a probability measure $\mu$ on a countable group $\Gamma$
such that the associated transition probabilities satisfy $p_n(x,y)
\sim C(x,y) R^{-n} n^{-\beta}$ for some $R\geq 1$ and $\beta>1$. Let
$f_n(x,y)$ be the first visit probabilities from $x$ to $y$ at time
$n$, i.e.,
  \begin{equation*}
  f_n(x,y) = \Pbb_x(X_1,\dotsc, X_{n-1}\not=y, X_n = y).
  \end{equation*}
Then $f_n(x,y) \sim C'(x,y) R^{-n} n^{-\beta}$ for some constants
$C'(x,y)$.
\end{prop}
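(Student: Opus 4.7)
The plan is to extract the asymptotics of $f_n(x,y)$ from those of $p_n(x,y)$ through the first-hitting renewal identity at the level of generating functions. Let $G_r(x,y)=\sum_{n\geq 0} r^n p_n(x,y)$ and $F_r(x,y)=\sum_{n\geq 1} r^n f_n(x,y)$. Decomposing any path from $x$ to $y$ of positive length by its first visit to $y$ yields $p_n(x,y)=\sum_{k=1}^n f_k(x,y) p_{n-k}(y,y)$ for $n\geq 1$, which translates into
\begin{equation*}
G_r(x,y)=\delta_{x,y}+F_r(x,y) G_r(y,y), \qquad F_r(x,y)=\frac{G_r(x,y)-\delta_{x,y}}{G_r(y,y)}.
\end{equation*}
Because $\beta>1$, the hypothesis $p_n(x,y)\sim C(x,y) R^{-n} n^{-\beta}$ forces $G_R(x,y)<\infty$, hence $F_R(x,y)<\infty$.

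The first main step is to read off the singular behavior of these generating functions at $r=R$. A standard Abelian computation---for instance by splitting $\sum R^n p_n(x,y)(1-(r/R)^n)$ at the natural scale $n\sim 1/(1-r/R)$, or via a Mellin transform---gives, in the regime $\beta\in(1,2)$ relevant to Theorem~\ref{main_thm},
\begin{equation*}
G_R(x,y)-G_r(x,y)\sim -\Gamma(1-\beta)\, C(x,y) R^{1-\beta}(R-r)^{\beta-1} \qquad (r\to R^-),
\end{equation*}
and by differentiation $\partial G_r(x,y)/\partial r\sim C(x,y)\Gamma(2-\beta) R^{1-\beta}(R-r)^{\beta-2}$. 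Combining with the analogous expansion for $G_r(y,y)$ via the algebraic identity above produces
\begin{equation*}
\frac{\partial F_r(x,y)}{\partial r}\sim C'(x,y)\, \Gamma(2-\beta) R^{1-\beta}(R-r)^{\beta-2},
\end{equation*}
with explicit constant $C'(x,y)=(C(x,y)-F_R(x,y) C(y,y))/G_R(y,y)$.

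The second main step is to convert this singular expansion into the pointwise asymptotic $f_n(x,y)\sim C'(x,y) R^{-n} n^{-\beta}$ by a Tauberian argument. Since the singularity of $\partial F_r/\partial r$ has exactly the same form as that of $\partial G_r/\partial r$ used in the proof of Theorem~\ref{main_thm}, I would apply the Tauberian argument recalled at the start of this section, with $F_r$ in place of $G_r$: Karamata's theorem produces the Cesaro-type asymptotic $\sum_{k\leq n} k R^{k-1} f_k(x,y)\sim C'(x,y) R^{-1} n^{2-\beta}/(2-\beta)$, and one then upgrades this to the pointwise asymptotic using an almost-monotonicity property of $R^n f_n(x,y)$.

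The main obstacle is precisely this almost-monotonicity step. For $R^n p_n(x,y)$ in the symmetric setting it follows from the spectral theorem applied to the self-adjoint Markov operator on $\ell^2$, as described just before the statement of the proposition. For $R^n f_n(x,y)$ the same argument is not directly available, so I would transfer regularity from $p_n$ to $f_n$ through the renewal identity written as $R^n f_n(x,y)=R^n p_n(x,y)-\sum_{k=1}^{n-1} (R^k f_k(x,y))(R^{n-k} p_{n-k}(y,y))$, using the tail-sum bounds from the Cesaro asymptotic of the previous step to control the convolution term. It is precisely here that symmetry of $\mu$ enters the application to Theorem~\ref{main_thm}.
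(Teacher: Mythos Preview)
Your approach has a genuine gap, and the paper takes a quite different route that sidesteps it entirely.

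The gap is exactly where you flag it: you need almost-monotonicity of $R^n f_n(x,y)$ to upgrade the Karamata-type Cesaro estimate to a pointwise asymptotic, and you do not establish it. Your suggestion to ``transfer regularity'' through the convolution identity $R^n f_n = R^n p_n - \sum_{k<n}(R^k f_k)(R^{n-k}p_{n-k})$ is circular: controlling that convolution tail to the precision you need already requires pointwise information on $R^k f_k$ of the same quality you are trying to prove. Moreover, the proposition as stated carries no symmetry hypothesis and is stated for all $\beta>1$, so a proof that invokes symmetry (for spectral monotonicity of $p_n$) and is written only for $\beta\in(1,2)$ does not cover the claim.

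The paper avoids Tauberian theorems altogether here. It uses a Banach-algebra transfer result of Chover--Ney--Wainger (stated just after the proposition): if $a_n\geq 0$ with $a_n\sim c R^{-n}n^{-\beta}$ and $\Phi$ is analytic on a neighborhood of $\{A(z):|z|\leq R\}$ where $A(z)=\sum a_n z^n$, then the coefficients $b_n$ of $\Phi(A(z))$ satisfy $b_n\sim a_n\,\Phi'(A(R))$. For $x=y=e$ one takes $A(z)=\sum p_n(e,e)z^n$ and $\Phi(t)=1-1/t$, which is analytic near $A(R)=G_R(e,e)\in(0,\infty)$; the renewal equation gives $\sum f_n(e,e)z^n=\Phi(A(z))$, so $f_n(e,e)\sim p_n(e,e)/G_R(e,e)^2$ directly. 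For $x\neq y$, one writes $\sum f_n(x,y)z^n = \bigl(\sum p_n(x,y)z^n\bigr)\cdot\bigl(1/\sum p_n(e,e)z^n\bigr)$ and uses that the class of power series with coefficients $\sim c R^{-n}n^{-\beta}$ is closed under multiplication. This argument needs no symmetry, no monotonicity, and no restriction on $\beta$ beyond $\beta>1$; it operates purely at the level of coefficient asymptotics rather than passing through the singular expansion of the generating function.
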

For the proof, we will mainly rely on the following theorem
(\cite[Theorem 1]{chover_banach_alg}):
\begin{thm}
\label{thm:banach_alg}
Consider a function $A(z) = \sum_{n=0}^\infty a_n z^n$ with $a_n\geq
0$ and $a_n\sim c R^{-n} n^{-\beta}$ with $\beta>1$. Consider also a
function $\Phi$ which is analytic on a neighborhood of $\{f(z) \st
|z|\leq R\}$. Then the coefficients $b_n$ of the series expansion
$\Phi(A(z)) = \sum b_n z^n$ satisfy
  \begin{equation*}
  b_n \sim a_n \Phi'(\sum a_n R^n).
  \end{equation*}
\end{thm}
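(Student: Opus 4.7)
The central ingredient is a convolution rule for power-law tails: if $c_n \sim \gamma_c R^{-n} n^{-\beta}$ and $d_n \sim \gamma_d R^{-n} n^{-\beta}$ with $\hat c(R) \coloneqq \sum c_n R^n$ and $\hat d(R)$ both finite, then $(c\ast d)_n \sim (\gamma_c \hat d(R) + \gamma_d \hat c(R))R^{-n} n^{-\beta}$. To prove this I would split $\sum_{k=0}^n c_k d_{n-k}$ into the three regimes $k\leq K$, $n-k\leq K$, and $K<k<n-K$. The first two, after dividing by $R^{-n}n^{-\beta}$ and letting $n\to\infty$, contribute respectively $\gamma_d \sum_{k\leq K}c_k R^k$ and $\gamma_c \sum_{k\leq K}d_k R^k$, recovering the two leading terms once $K\to\infty$. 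Using the bounds $c_k R^k\leq Ck^{-\beta}$ and $d_{n-k}R^{n-k}\leq C(n-k)^{-\beta}$ on the middle regime, and comparing $(n-k)^{-\beta}$ with $(n/2)^{-\beta}$ for $K<k<n/2$ (and symmetrically), the middle block is bounded, uniformly in $n$, by a constant times $K^{1-\beta}$, which vanishes as $K\to\infty$; this is the step where $\beta>1$ is essential.

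Iterating the lemma by induction on $k$ gives $a^{\ast k}_n\sim k\,c\,A(R)^{k-1}R^{-n}n^{-\beta}$, together with a uniform estimate $a^{\ast k}_n R^n n^\beta\leq C_0\,k\,A(R)^{k-1}$ valid for all $k,n\geq 1$, obtained from a quantitative version of the lemma. For a polynomial $\Phi(w)=\sum_{k=0}^d\phi_k w^k$, the conclusion is then immediate by linearity: $b_n R^n n^\beta=\sum_{k\leq d}\phi_k a^{\ast k}_n R^n n^\beta \to c\sum_{k\leq d}k\phi_k A(R)^{k-1}=c\,\Phi'(A(R))$.

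For general analytic $\Phi$, I would invoke the holomorphic functional calculus in the convolution Banach algebra $\mathcal{B}_R=\{(c_n):\sum|c_n|R^n<\infty\}$. Its character space is $\overline{D}_R$ via $\chi_z((c_n))=\sum c_n z^n$, so $\sigma(a)=A(\overline{D}_R)$; since $\Phi$ is analytic on a neighborhood $U$ of this spectrum, the Riesz calculus realizes $\Phi(a)\in\mathcal{B}_R$, whose coefficients are precisely the $b_n$, as the contour integral $\Phi(a)=\frac{1}{2\pi\ic}\oint_\gamma \Phi(\zeta)(\zeta-a)^{-1}\,\dd\zeta$ for a $\gamma\subset U$ encircling $A(\overline{D}_R)$. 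Reading the $n$-th coordinate gives $b_n=\frac{1}{2\pi\ic}\oint_\gamma \Phi(\zeta)g_n(\zeta)\,\dd\zeta$ with $g_n(\zeta)=\sum_{k\geq 0}a^{\ast k}_n/\zeta^{k+1}$. The uniform bound from the previous step shows that, once $\gamma$ is chosen in the intersection of $U$ with the exterior of $\{|\zeta|\leq A(R)\}$, the Neumann series converges absolutely with a uniform dominator, whence $n^\beta R^n g_n(\zeta)\to c/(\zeta-A(R))^2$ uniformly in $\zeta\in\gamma$; exchanging limit and integration then yields $n^\beta R^n b_n\to c\,\Phi'(A(R))$ by Cauchy's formula for the derivative.

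The main obstacle is producing such a contour: $U$ is only required to contain a neighborhood of $A(\overline{D}_R)$, whereas the Neumann expansion forces $\gamma$ outside the spectral disk $\{|\zeta|\leq A(R)\}$. The saving geometric observation is that $A(\overline{D}_R)\subset\overline{D}_{A(R)}$ touches the boundary circle only at $\zeta=A(R)$, so $U$ automatically contains a short arc exterior to $\overline{D}_{A(R)}$ near $A(R)$. Closing $\gamma$ deeper inside $U$ while staying outside $\overline{D}_{A(R)}$ is, however, not free and may require either a simple-connectedness assumption on $A(\overline{D}_R)$ or, equivalently, a uniform approximation of $\Phi$ by polynomials on the polynomially convex hull of $A(\overline{D}_R)$ via Runge's theorem, with the approximation error controlled in the $\mathcal{B}_R$-norm.
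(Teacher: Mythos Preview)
The paper does not prove this theorem; it is quoted from \cite[Theorem~1]{chover_banach_alg} and used as a black box in the proof of Proposition~\ref{prop_first_return_estimates}. Your convolution lemma is exactly \cite[Lemma~1]{chover_banach_alg} (which the paper also cites), and the polynomial case is fine.

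The step from polynomials to analytic $\Phi$ has a genuine gap. First, the uniform bound $a^{*k}_n R^n n^\beta \leq C_0\, k\, A(R)^{k-1}$ does not follow from the quantitative convolution lemma as you sketch it: iterating the two-sided split picks up a factor $(2^\beta)^{k-1}$. A sharper argument (write $n^\beta=(\sum_i j_i)^\beta\leq k^{\beta-1}\sum_i j_i^\beta$ and use $j_i^\beta a_{j_i}R^{j_i}\leq C$) yields $C\,k^\beta A(R)^{k-1}$, which still suffices for the Neumann series on $\{|\zeta|>A(R)\}$, so this part is salvageable. The contour obstruction you flag, however, is not resolved by your Runge suggestion. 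It is true that $A(\overline{D}_R)$ is simply connected (by the argument principle, any $w\notin A(\overline{D}_R)$ has winding number zero with respect to the curve $A(\partial D_R)$, hence can be joined to $\infty$ without meeting $A(\partial D_R)$, hence without meeting $A(\overline{D}_R)$), so polynomials are uniformly dense in the analytic functions on a neighborhood. But uniform approximation of $\Phi$ only controls $\sup_{|z|\leq R}|\Phi(A(z))-P(A(z))|$, hence $|b_n^\Phi-b_n^P|R^n$, \emph{not} $n^\beta|b_n^\Phi-b_n^P|R^n$; the error in the quantity you actually care about remains unbounded. What is really needed---and this is the substance of the Chover--Ney--Wainger argument---is to show that for every $\zeta\notin A(\overline{D}_R)$ the resolvent $(\zeta-a)^{-1}$ already lies in the stronger Banach algebra $\{c:\sum_n|c_n|R^n+\sup_n n^\beta|c_n|R^n<\infty\}$, i.e., that the spectrum of $a$ computed in that algebra is still $A(\overline{D}_R)$. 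Once this is established, the holomorphic functional calculus runs inside the stronger algebra along any contour in $U$, and the identity $L(c* d)=L(c)\hat d(R)+L(d)\hat c(R)$ for $L(c)=\lim c_nR^nn^\beta$ yields $L(\Phi(a))=c\,\Phi'(A(R))$ without any constraint on where $\gamma$ sits relative to $\{|\zeta|=A(R)\}$.
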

\begin{proof}[Proof of Proposition~\ref{prop_first_return_estimates}]
Decomposing a path from $e$ to itself into successive excursions, one
gets the renewal equation
  \begin{equation}
  \label{eq_renewal}
  \sum_{n=0}^\infty p_n(e,e)z^n= \frac{1}{1-\sum_{n=1}^\infty f_n(e,e) z^n}.
  \end{equation}
Since $\sum p_n(e,e)R^n =G_R(e,e)<\infty$, one deduces $\sum f_n(e,e)
R^n < 1$. Therefore, $1-\sum f_n(e,e) z^n$ does not vanish for
$|z|\leq R$, and $\sum p_n(e,e)z^n$ is well defined and nonzero for
any such $z$. From~\eqref{eq_renewal}, one gets
  \begin{equation*}
  \sum f_n(e,e)z^n=1-1/\sum p_n(e,e)z^n=\Phi(\sum p_n(e,e)z^n),
  \end{equation*}
where we set $\Phi(t)=1-1/t$. Since $p_n(e,e) \sim C
R^{-n}n^{-\beta}$, we may apply Theorem~\ref{thm:banach_alg} to
obtain $f_n(e,e) \sim C' R^{-n} n^{-\beta}$.

For $x\not =y$, one has $\sum p_n(x,y) z^n = \left(\sum
f_n(x,y)z^n\right)\cdot \left(\sum p_n(e,e)z^n\right)$. The functions
$\sum p_n(x,y)z^n$ and $1/\sum p_n(e,e)z^n$ both have coefficients
that are asymptotic to a constant times $R^{-n}n^{-\beta}$. Since the
set of all functions with this property is closed under
multiplication (see~\cite[Lemma 1]{chover_banach_alg}), we get
$f_n(x,y)\sim C'(x,y) R^{-n}n^{-\beta}$ as desired.
\end{proof}

\appendix

\section{Ancona inequalities for surface groups}
\label{sec_appendix}

In this appendix, we prove Ancona inequalities for surface groups
without any symmetry assumption on the measure:

\begin{thm}
\label{thm_main_SL}
Let $\Gamma$ be a cocompact Fuchsian group, i.e., a cocompact
discrete subgroup of $\SL$. Let $\mu$ be an admissible finitely
supported probability measure on $\Gamma$. Then it satisfies strong
uniform Ancona inequalities.
\end{thm}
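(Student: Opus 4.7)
The theorem reduces to establishing the non-symmetric analogue of Lemma~\ref{lem_avoidballs}, since every other ingredient in the derivation of Theorem~\ref{thm_ancona_strong} is symmetry-free. Symmetry intervenes there only at the very last step, where one identifies $\sum_{\lgth{u}=k} G_R(e,u)^2$ with $\sum_{\lgth{u}=k} H_R(e,u)$ and invokes Lemma~\ref{lem_GRbounded}. The plan is therefore to prove, for cocompact Fuchsian groups $\Gamma$ and admissible finitely supported possibly non-symmetric $\mu$, the uniform bound
\begin{equation*}
  \sum_{\lgth{u}=k} G_R(e,u)^2 \leq C.
\end{equation*}

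Writing $G_R(e,u)^2 = H_R(e,u)\cdot G_R(e,u)/G_R(u,e)$ and invoking Lemma~\ref{lem_GRbounded}, which is valid without symmetry, the task reduces to showing that the drift ratio $u\mapsto G_R(e,u)/G_R(u,e)$ is uniformly bounded on $\Gamma$. For parameters $r<R$, Ancona's classical theorem gives continuity of the Martin kernel on $\partial\Gamma\simeq S^1$ and therefore yields a bound $C(r)$ on the drift ratio. The main obstacle is to upgrade this to a bound uniform in $r\leq R$ without circular reasoning, since such boundedness at $r=R$ is essentially equivalent to strong Ancona inequalities.

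For cocompact Fuchsian groups the 2-dimensional geometry provides the key extra input. I would use the specifically 2D fact that the Cannon automaton for $\Gamma$ admits a topologically transitive dominating component (after possibly passing to a finite-index subgroup), together with the compactness of $\partial\Gamma\simeq S^1$, to show that the drift ratio extends continuously to the Gromov boundary at $r=R$. Concretely, the transitive transfer operator of Section~\ref{sec_diff_eq} has a simple leading eigenvalue with a unique normalized eigenvector, so the associated Patterson-Sullivan-type measure depends continuously on $r$ up to $r=R$, and the drift ratio is controlled by comparing this measure for $\mu$ and for $\check\mu(g)=\mu(g^{-1})$.

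The hard part will be ensuring that this continuity argument does not implicitly use strong Ancona inequalities at $r=R$. The most promising path is to run the argument of Lemma~\ref{lem_avoidballs} itself uniformly in $r\in[1,R)$, exploiting that Lemma~\ref{lem_GRbounded} holds with a constant uniform in $r$ by the same proof, and then passing to the limit $r\to R$. An alternative is to bypass boundedness of the drift ratio entirely by replacing the $\ell^2$ operator-norm bound in Lemma~\ref{lem_avoidballs} with an $\ell^\infty$ or mixed-norm Schur-type bound, exploiting the 1D angular structure of $S^1$ to estimate directly the $R$-weighted visits to a narrow angular barrier; this latter approach is the one most specific to the 2D setting, and is where I would expect the actual argument in the appendix to live.
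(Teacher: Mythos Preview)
Your reduction has a genuine gap: the claim that the drift ratio $u\mapsto G_R(e,u)/G_R(u,e)$ is uniformly bounded on $\Gamma$ is almost certainly false for non-symmetric $\mu$, even on surface groups. Continuity of the Martin kernel on $\partial\Gamma$ does not imply this. The Martin kernel controls ratios $G_r(x,y)/G_r(e,y)$ as $y\to\xi$ for \emph{fixed} $x$; it says nothing about $G_r(e,u)/G_r(u,e)=G_r(e,u)/G_r(e,u^{-1})$ as $u\to\infty$. In fact, once Ancona inequalities hold, this ratio behaves multiplicatively along geodesics and should typically grow or decay exponentially in $\lgth{u}$ when $\mu\neq\check\mu$; the paper's own remark about the free-group example (where $G_R(e,a^n)$ is exponentially large and $G_R(a^n,e)$ exponentially small) illustrates exactly this obstruction, and there is no 2D mechanism that would prevent it. Consequently $\sum_{\lgth{u}=k}G_R(e,u)^2$ need not be bounded, and the whole-sphere strategy of Lemma~\ref{lem_avoidballs} cannot be salvaged by a uniform drift-ratio bound. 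Your alternative suggestions (transitivity of the Cannon automaton, Schur bounds) remain too vague to constitute an argument.

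The paper's route is entirely different and avoids this obstruction by \emph{not} trying to control $G_R(e,\cdot)^2$ on whole spheres. Instead it constructs the barriers from a small number of explicit geodesic-like sets: one picks an auxiliary \emph{symmetric} admissible measure $\nu$ and uses (bilateral) Kingman subadditivity to show that for $\nu$-typical trajectories $X_k$, both $n^{-1}\log G_R(e,X_n)$ and $n^{-1}\log G_R(X_n,e)$ converge to the \emph{same} limit $c$; symmetry of $\nu$ (so that $X_n^{-1}\overset{d}{=}X_n$) is what forces the two limits to coincide. Lemma~\ref{lem_GRbounded} together with an entropy argument then gives $c<0$, so along each such trajectory one has the two-sided exponential decay that your drift-ratio bound would have provided globally. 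The 2D input is purely topological: finitely many $\nu$-typical trajectories converging to evenly spaced points of $S^1$ cut the complement of $B(e,n)$ into sectors, so their thickenings are genuine barriers separating $x$ from $z$. The $\ell^2$ operator-norm argument then goes through on these barriers, yielding Proposition~\ref{prop_dim2} and hence strong uniform Ancona inequalities via Lemma~\ref{lem_abstract_ancona}.
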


This theorem has several corollaries:
\begin{cor}
Under the assumptions of the theorem, the Martin boundary for
$R$-harmonic functions coincides with the geometric boundary of the
group, i.e., the unit circle $S^1$.
\end{cor}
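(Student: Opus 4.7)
The plan is to deduce the corollary from the strong uniform Ancona inequalities of Theorem~\ref{thm_main_SL} by exactly the classical argument sketched after Theorem~\ref{thm_ancona_strong}: strong Ancona bounds force the Martin kernels at the spectral radius to form Cauchy sequences along any sequence converging in the geometric boundary, and then one checks separation and continuity.

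First I would set $r=R$, $x'=e$ in the definition of strong uniform Ancona inequalities. For any fixed $x\in\Gamma$, and for $y,y'$ both far from $x,e$ and well-approximated by the configuration of Definition~\ref{def_strong_ancona}, inequality~\eqref{eq_strong_ancona} becomes
  \begin{equation*}
  \left| \frac{K_{R,y}(x)}{K_{R,y'}(x)} - 1 \right| \leq Ce^{-\rho n},
  \end{equation*}
where $n$ is the distance from $\{e,x\}$ to $\{y,y'\}$ along the separating geodesic. If $(y_i)$ is a sequence tending to a boundary point $\xi\in\partial\Gamma=S^1$, then for $i,j$ large the configuration $e,x,y_i,y_j$ satisfies this tree approximation with arbitrarily large $n$ (by hyperbolicity of the Cayley graph and the definition of convergence to the boundary). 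Since the kernels are uniformly bounded above and below by ordinary Ancona inequalities, it follows that $(K_{R,y_i}(x))_i$ is a Cauchy sequence in $\R^*_+$, hence converges to a positive limit $K_{R,\xi}(x)$ independent of the sequence. The limit function $x\mapsto K_{R,\xi}(x)$ is then a positive $R$-harmonic function normalized at $e$, since finite support of $\mu$ transfers harmonicity to pointwise limits.

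Next I would verify the separation property $K_{R,\xi}\neq K_{R,\eta}$ for $\xi\neq\eta$ in $S^1$. Take $x$ far inside $\Gamma$ on a geodesic from $e$ towards $\xi$: a tree approximation of $e,x,y_i,y_j'$, with $y_i\to\xi$ and $y_j'\to\eta$, shows that a geodesic from $e$ to $y_j'$ does not pass close to $x$, while one from $e$ to $y_i$ does. Combining the upper Ancona bound $G_R(e,y_j')\geq C^{-1} G_R(e,x)G_R(x,y_j')$ on the one hand with the triangle inequality $G_R(e,y_j')\leq G_R(e,x)G_R(x,y_j')$ is not enough by itself; instead one compares $K_{R,y_i}(x)$ and $K_{R,y_j'}(x)$ directly: for $x$ far on the geodesic towards $\xi$, the former is bounded below by a constant depending only on $\mu$ using the lower Ancona inequality along the geodesic, while the latter decays like $C^{-d(e,x)}$ via Harnack and the fact that a geodesic from $e$ to $y_j'$ stays uniformly far from $x$. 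Sending $d(e,x)\to\infty$ separates the two limits.

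Finally, I would check that $\xi\mapsto K_{R,\xi}(x)$ is continuous, so the injection $\partial\Gamma\hookrightarrow \mathcal{M}_R$ into the Martin boundary is a homeomorphism between compact Hausdorff spaces; since by definition the Martin boundary is the closure of the image of $\Gamma$ under $y\mapsto K_{R,y}$ and every sequence tending to infinity in $\Gamma$ subconverges in $\Gamma\cup\partial\Gamma$ (as $\partial\Gamma$ is compact), both boundaries coincide. I expect the one genuinely non-formal step to be the uniformity in the Cauchy argument: one must ensure that the tree-approximation constant and the exponential rate $\rho$ in~\eqref{eq_strong_ancona} can be applied for all large enough $i,j$ regardless of $x$, but this is precisely what Definition~\ref{def_strong_ancona} supplies. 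The remaining verifications (separation and continuity) are then routine consequences of Harnack and Ancona inequalities, as in~\cite{izumi_hyperbolic}.
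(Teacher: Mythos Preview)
Your proposal is correct and follows essentially the same route as the paper, which simply states that the corollary ``is a classical consequence of Ancona inequalities'' and defers to the discussion after Theorem~\ref{thm_ancona_strong} and to~\cite{izumi_hyperbolic}. Your Cauchy-sequence argument from strong Ancona, followed by separation and continuity, is exactly that classical argument spelled out; the only minor imprecision is in the separation step, where the claimed decay of $K_{R,y_j'}(x)$ does not come from Harnack alone but rather from comparing the two Ancona estimates to get $K_{R,\xi}(x)/K_{R,\eta}(x)\asymp 1/H_R(e,x)\to\infty$---but this is indeed routine and covered by your reference to~\cite{izumi_hyperbolic}.
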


\begin{cor}
Under the assumptions of the theorem, for any $x,y\in \Gamma$, there
exists $C(x,y)>0$ such that $\sum_{k=1}^n k R^k p_k(x,y) \sim C(x,y)
n^{1/2}$.
\end{cor}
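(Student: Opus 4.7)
The plan is to combine Theorem \ref{thm_main_SL} with the main results of Sections 2 and 3, and then invoke a standard Tauberian argument. This is exactly the Cesàro local limit theorem alluded to at the beginning of Section \ref{sec_tauberian}; the point of having the theorem in the non-symmetric setting is precisely that the Cesàro form does not need symmetry (only the final deconvolution from Cesàro to pointwise asymptotics does).

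First I would apply Theorem \ref{thm_main_SL} to conclude that the measure $\mu$ satisfies strong uniform Ancona inequalities. Feeding this into Theorem \ref{thm_strong_asymp} (whose proof in Section \ref{sec_diff_eq} never used symmetry), I obtain
\begin{equation*}
\frac{\partial G_r(x,y)}{\partial r} \sim C(x,y)(R-r)^{-1/2} \qquad \text{as } r\to R^{-},
\end{equation*}
for some constants $C(x,y)>0$.

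Next I would rewrite this in terms of the non-negative coefficients $b_n \coloneqq n R^{n-1} p_n(x,y)$. Since $G_r(x,y)=\sum r^n p_n(x,y)$, one has $\partial G_r(x,y)/\partial r=\sum_{n\geq 1} n r^{n-1}p_n(x,y)$, so setting $s=r/R$,
\begin{equation*}
\sum_{n\geq 1} b_n s^{n-1} = \frac{\partial G_r(x,y)}{\partial r} \sim C(x,y)(R-r)^{-1/2} = C(x,y) R^{-1/2}(1-s)^{-1/2}
\end{equation*}
as $s\to 1^{-}$. Since the $b_n$ are non-negative and the generating function is regularly varying of index $-1/2$ at $s=1$, Karamata's Tauberian theorem (for power series with non-negative coefficients, cf.\ Feller, vol.\ II, XIII.5) applies and yields
\begin{equation*}
\sum_{k=1}^{n} b_k \;\sim\; \frac{C(x,y) R^{-1/2}}{\Gamma(3/2)}\, n^{1/2}.
\end{equation*}
Since $\sum_{k=1}^{n}b_k = R^{-1}\sum_{k=1}^{n}k R^{k} p_k(x,y)$, this gives the claimed asymptotics with $C'(x,y)=R^{1/2}C(x,y)/\Gamma(3/2)$.

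There is essentially no obstacle: the only non-trivial input is Theorem \ref{thm_strong_asymp}, which is already available to us through Theorem \ref{thm_main_SL}, and Karamata's theorem requires only the non-negativity of $b_n$, which is automatic. In particular, no symmetry or spectral-theory argument is needed at this stage — those are required only to pass from the Cesàro statement to the genuine pointwise asymptotics $p_n(x,y)\sim C(x,y)R^{-n}n^{-3/2}$, which is precisely why the present corollary can be stated for arbitrary (non-symmetric) admissible finitely supported $\mu$ on a cocompact Fuchsian group.
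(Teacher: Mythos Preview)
Your proof is correct and follows exactly the approach sketched in the paper: apply Theorem~\ref{thm_main_SL} to get strong uniform Ancona inequalities, feed this into Theorem~\ref{thm_strong_asymp} (whose proof in Section~\ref{sec_diff_eq} does not use symmetry), and then invoke Karamata's tauberian theorem. You have simply spelled out the Karamata step in more detail than the paper does.
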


The first corollary is a classical consequence of Ancona
inequalities. For the second corollary, we rely on
Section~\ref{sec_diff_eq} (or on~\cite{gouezel_lalley}, since the
Cannon automaton is transitive) to deduce that the Green function
satisfies $\partial G_r(x,y)/\partial r \sim C(x,y) /\sqrt{R-r}$ when
$r$ tends to $R$. Using Karamata's tauberian theorem, this readily
implies the statement of the corollary (see the arguments in
Section~\ref{sec_tauberian}). Note that we are unable to deduce the
true local limit theorem from this estimate since we do not know if
$R^n p_n(x,y)$ is decreasing, or sufficiently well approximated by a
decreasing sequence, as in the symmetric situation.

The strong uniform Ancona inequalities of Theorem~\ref{thm_main_SL}
are a consequence of estimates for the weight of paths avoiding a
ball, given in the following proposition, and of
Lemma~\ref{lem_abstract_ancona}.

\begin{prop}
\label{prop_dim2}
Let $\Gamma$ be a cocompact Fuchsian group in $\SL$. Let $\mu$ be an
admissible finitely supported probability measure on $\Gamma$. For
any $K>0$, there exists $n_0>0$ such that, for any $n\geq n_0$, for
any points $x,y,z$ on a geodesic segment (in this order) with
$d(x,y)\in [n,100n]$ and $d(y,z) \in [n,100 n]$,
  \begin{equation*}
  G_R(x,z; B(y,n)^c) \leq K^{-n}.
  \end{equation*}
\end{prop}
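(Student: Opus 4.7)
The plan is to retrace the proof of Lemma~\ref{lem_avoidballs} line by line, observing that the symmetry hypothesis on $\mu$ is invoked there at exactly one point: the very last step, which rewrites $\sum_{\lgth u=k}G_R(e,u)^2$ as $\sum_{\lgth u=k}H_R(e,u)$ and then applies Lemma~\ref{lem_GRbounded} (whose own proof is symmetry-free). The random-barrier construction in a hyperbolic model space, the Cauchy--Schwarz / operator-norm reduction building $N=\lfloor e^{\epsilon n}\rfloor$ barriers and estimating $\prod\|L_i\|$, the quasi-isometric embedding $\Psi\colon\Gamma\to\HH^m$, and the tree-approximation argument bounding the counting factor $N(u)$ all go through verbatim, and in our setting we may in fact take $m=2$ since a cocompact Fuchsian group is quasi-isometric to $\HH^2$ itself. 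This chain of reductions produces
\begin{equation*}
\int f_i \leq Ce^{C\epsilon n}\sum_{\lgth u\geq n} G_R(e,u)^2\,e^{-\alpha\lgth u/2},
\end{equation*}
and the whole proof will be complete once a uniform bound $\sum_{u\in\Gamma}G_R(e,u)^2\,e^{-\alpha\lgth u}\leq C$ is established.

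The core task is therefore to provide, in the non-symmetric 2-dimensional setting, a replacement for the bound $\sum_{\lgth u=k}G_R(e,u)^2\leq C$. My plan is to exploit planarity by fibering over the visual boundary $S^1$. Placing $O=\Psi(e)$ at the center of the disk model, the image $\Psi(\Sbb_k)$ lies in a bounded neighbourhood of a hyperbolic circle of radius $\lambda k$, which is a \emph{one-dimensional} object. Hence any fixed angular neighbourhood of a half-geodesic from $O$ meets $\Psi(\Sbb_k)$ in only $O(1)$ points, in sharp contrast with the higher-dimensional case. Parametrising points in $\Sbb_k$ by their angular direction $\xi\in S^1$ and using Harnack inequalities to replace $G_R(e,u)^2$ by a Harnack representative along the ray $[O\xi)$, the sum $\sum_{\lgth u=k}G_R(e,u)^2$ is then dominated by an integral over $S^1$ of a squared Martin-kernel-type quantity against a bounded density. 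Summing a further factor $e^{-\alpha k}$ in the radial direction yields a convergent geometric series, and produces the required bound.

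The main obstacle is this last step: controlling the integrated square of $G_R(e,u)$ along rays. In the symmetric case one gets $G_R(e,u)^2=H_R(e,u)$ for free, so the $L^1$ bound of Lemma~\ref{lem_GRbounded} is automatically an $L^2$ bound; without symmetry there is no such identification, and the $L^2$ control has to be genuinely proved. I expect this to go through by combining the finiteness of the harmonic measure $\nu_R$ on $S^1$ with the quasi-conformality relation~\eqref{eq_quasiconformal} (which in 2D bounds the Radon--Nikodym density of $\nu_R$ against Lebesgue, as in~\cite{BHM:2}), so that the integral $\int_{S^1}|\tilde K_\xi|^2\,d\xi$ at a given radial depth is uniformly bounded. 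The crucial 2-dimensional input, and the point where the argument finally separates from the higher-dimensional one, is precisely that only one-dimensionally many lattice points per radial shell can crowd along any half-geodesic; it is this collapse of dimension that lets us trade the missing symmetry of $\mu$ for planarity of $\HH^2$.
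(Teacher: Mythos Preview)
Your approach has a genuine gap at the decisive step, and it diverges fundamentally from the paper's argument.

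You correctly isolate the single symmetry-dependent step in Lemma~\ref{lem_avoidballs}, namely the identification $\sum_{\lgth u=k} G_R(e,u)^2 = \sum_{\lgth u=k} H_R(e,u)$, and you propose to bypass it by bounding $\sum_u G_R(e,u)^2 e^{-\alpha\lgth u}$ directly via planarity. But the tool you invoke---the measure $\nu_R$ and the quasi-conformality relation~\eqref{eq_quasiconformal}---is unavailable here: it is constructed in Section~\ref{sec_diff_eq} \emph{under the standing hypothesis of strong uniform Ancona inequalities}, which is precisely what Proposition~\ref{prop_dim2} is meant to establish (via Lemma~\ref{lem_abstract_ancona}). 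So the argument is circular. Even setting that aside, $\tilde K_\xi$ is defined through $H_R=G_R(e,\cdot)G_R(\cdot,e)$, not through $G_R(e,\cdot)^2$, so quasi-conformality of $\nu_R$ says nothing about the quantity you need. The paper itself cautions, just before Lemma~\ref{lem_dim2_barriersprop}, that for non-symmetric $\mu$ one can have $G_R(e,a^n)$ exponentially large along certain rays, so that $\sum_{x\in\Sbb_n} G_R(e,x)^2$ may blow up; one-dimensionality of the sphere does not help, because the difficulty is the \emph{size} of $G_R(e,u)$ at individual points, not how many points there are per angular bin.

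The paper takes a completely different route. It never tries to control $\sum G_R(e,u)^2$ over all $u$; instead it builds barriers out of \emph{typical} points, where ``typical'' is with respect to an \emph{auxiliary symmetric} random walk given by some measure $\nu$. A bilateral version of Kingman's subadditive ergodic theorem (Theorem~\ref{thm_kingman}) shows that along $\nu$-trajectories $X_k$ both $n^{-1}\log F_R(e,X_n)$ and $n^{-1}\log F_R(X_n,e)$ converge to the \emph{same} limit $c$ (the equality of the two limits uses symmetry of $\nu$, not of $\mu$), and then the bound on $\sum H_R$ from Lemma~\ref{lem_GRbounded} forces $c\leq -h/2<0$. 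One then picks a \emph{fixed} finite number $2\ell+2$ of such typical trajectories landing at evenly spaced boundary points of $S^1$; planarity enters only to guarantee that any path from $x$ to $z$ avoiding $B(e,n)$ must cross at least $\ell$ of the thickened trajectories in order, going clockwise or counterclockwise. The resulting barriers satisfy the analogues of~\eqref{eq_proba_between_barriers} with an honest exponential rate, and taking $\ell$ large beats any prescribed $K$. In short: the paper trades the uncontrollable global sum $\sum G_R(e,u)^2$ for sums along finitely many carefully chosen one-dimensional sets, and the control comes from ergodic theory rather than from any global $L^2$ bound.
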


The rest of this section is devoted to the proof of the proposition.
The main tool in this proof is superadditivity (as in
Lemma~\ref{lem_avoidballs}, that relies on
Lemma~\ref{lem_GRbounded}). This time, it is in the form of Kingman's
subadditive ergodic theorem, or rather a bilateral version of this
theorem that we now give.

\begin{thm}
\label{thm_kingman}
Let $T:\Omega \to \Omega$ be an ergodic probability preserving
invertible automorphism of a probability space $(\Omega, \Pbb)$.
Consider for each bounded interval $I\subset \Z$ an integrable
function $\Phi_I:\Omega \to \R$ with the following properties:
\begin{enumerate}
\item If $I$ is the disjoint union of two intervals $I_1$ and
    $I_2$, then $\Phi_I(\omega) \leq \Phi_{I_1}(\omega) +
    \Phi_{I_2}(\omega)$.
\item One has $\Phi_{[m,n]}(\omega) = \Phi_{[m-1,
    n-1]}(T\omega)$.
\item The quantity $n^{-1} \int \Phi_{[0, n)}(\omega) \dd
    \Pbb(\omega)$ is bounded from below.
\end{enumerate}
Then, for almost every $\omega$, the quantity $(m+n)^{-1} \Phi_{[-m,
n)}(\omega)$ converges when $m+n\to \infty$ (and $m,n\geq 0$) towards
$\inf n^{-1} \int \Phi_{[0, n)}(\omega) \dd \Pbb(\omega) = \lim
n^{-1} \int \Phi_{[0, n)}(\omega) \dd \Pbb(\omega)$.
\end{thm}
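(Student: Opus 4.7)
The plan is to apply the unilateral Kingman subadditive ergodic theorem twice---once to $T$ and once to $T^{-1}$---and then combine the two one-sided conclusions via the subadditivity property~(1). Define $\Psi_n(\omega) := \Phi_{[0,n)}(\omega)$ and $\tilde\Psi_n(\omega) := \Phi_{[-n,0)}(\omega)$. Iterating the covariance property~(2) yields $\Phi_{[-m,n)}(\omega) = \Psi_{m+n}(T^{-m}\omega)$ and in particular $\tilde\Psi_n(\omega) = \Psi_n(T^{-n}\omega)$. Properties~(1) and~(2) together imply that $\Psi_n$ is subadditive for $T$ and that $\tilde\Psi_n$ is subadditive for $T^{-1}$: the inequality $\tilde\Psi_{m+n}(\omega) \leq \tilde\Psi_n(\omega) + \tilde\Psi_m(T^{-n}\omega)$ follows directly from (1) applied to $[-m-n,0) = [-m-n,-n) \sqcup [-n,0)$ and the covariance identity $\Phi_{[-m-n,-n)}(\omega) = \tilde\Psi_m(T^{-n}\omega)$. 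With (3) providing the lower bound on integral averages, unilateral Kingman applied to $(T,\Psi_n)$ and to $(T^{-1},\tilde\Psi_n)$ gives, almost surely and in $L^1$, $\Psi_n/n \to L$ and $\tilde\Psi_n/n \to L$, where $L = \lim_n n^{-1}\int\Psi_n\dd\Pbb = \inf_n n^{-1}\int\Psi_n\dd\Pbb$ (equality of the infimum and the limit is Fekete's lemma applied to the subadditive numerical sequence $n \mapsto \int\Psi_n\dd\Pbb$); both applications give the same constant $L$ by $T$-invariance of $\Pbb$.

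\textbf{Upper bound.} Partitioning $[-m,n) = [-m,0) \sqcup [0,n)$ and applying property~(1) gives
\begin{equation*}
\frac{\Phi_{[-m,n)}(\omega)}{m+n} \leq \frac{m}{m+n}\cdot\frac{\tilde\Psi_m(\omega)}{m} + \frac{n}{m+n}\cdot\frac{\Psi_n(\omega)}{n}.
\end{equation*}
A case analysis on whether each of $m$ and $n$ tends to infinity or stays bounded, together with $\tilde\Psi_m/m \to L$ and $\Psi_n/n \to L$ almost surely (and the weights lying in $[0,1]$ with sum $1$), yields $\limsup_{m+n\to\infty}\Phi_{[-m,n)}(\omega)/(m+n) \leq L$ almost surely.

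\textbf{Lower bound and invariance.} Let $\ell(\omega) := \liminf_{m+n\to\infty} \Phi_{[-m,n)}(\omega)/(m+n)$. The covariance gives $\Phi_{[-m,n)}(T\omega) = \Phi_{[-(m-1),n+1)}(\omega)$ (for $m \geq 1$), and the reparameterization $(m,n) \mapsto (m-1,n+1)$ preserves $m+n$, so $\ell$ is $T$-invariant (the boundary case $m=0$ being negligible in the liminf); ergodicity then forces $\ell$ to equal almost surely a constant $\ell^*$. To prove $\ell^* \geq L$, I would embed $[-m,n)$ into a longer interval $[-m-K, n+K)$ with a parameter $K$ that grows but is much smaller than $m+n$, and use subadditivity in the form
\begin{equation*}
\Phi_{[-m,n)}(\omega) \geq \Phi_{[-m-K,n+K)}(\omega) - \Phi_{[-m-K,-m)}(\omega) - \Phi_{[n,n+K)}(\omega),
\end{equation*}
the main term $\Phi_{[-m-K,n+K)}(\omega)/(m+n+2K) = \Psi_{m+n+2K}(T^{-m-K}\omega)/(m+n+2K)$ being close to $L$ and the two boundary terms being Birkhoff-like sums of length $K$.

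\textbf{The main obstacle} is this lower bound: the shift $T^{-m-K}$ in the main term depends on $m$, so pointwise unilateral Kingman does not apply uniformly along the family of shifted points $\{T^{-m-K}\omega\}_m$. To circumvent this, one combines the almost sure Kingman convergence with the bilateral Birkhoff ergodic theorem applied to $g := \Phi_{[0,1)}$ (whose Birkhoff sums dominate the boundary contributions uniformly in $m$) together with the integral identity $(m+n)^{-1}\int \Phi_{[-m,n)}\dd\Pbb = (m+n)^{-1}\int\Psi_{m+n}\dd\Pbb \to L$ (via a reverse-Fatou / uniform-integrability argument, exploiting that $\Phi_{[-m,n)}/(m+n)$ is dominated by a bilateral Birkhoff average of $g$) to identify the $T$-invariant constant $\ell^*$ as $L$.
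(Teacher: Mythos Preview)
Your setup and the upper bound are correct and coincide with the paper's: apply unilateral Kingman to $\Psi_n$ for $T$ and to $\tilde\Psi_n$ for $T^{-1}$, identify the two limits via the change of variables $\omega' = T^{-n}\omega$, and split $[-m,n)$ at $0$ for the upper bound. The gap is in the lower bound. You correctly name the obstacle---that $\Psi_{m+n+2K}(T^{-m-K}\omega)$ involves a shift depending on $m$---but the proposed fix (``reverse-Fatou / uniform-integrability'') does not close it. The only domination you exhibit is from \emph{above}, $\Phi_{[-m,n)}/(m+n) \leq (m+n)^{-1}\sum_{k=-m}^{n-1} g(T^k\omega)$, and the bilateral maximal function of a general $g\in L^1$ is only in weak $L^1$, not $L^1$. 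Even if one grants integrable domination from above, Fatou combined with the integral identity $\int \Phi_{[-m,n)}/(m+n)\to L$ and the already-proved inequality $\limsup\leq L$ yields only $\limsup \Phi_{[-m,n)}/(m+n) = L$ almost surely, not $\liminf = L$; you have no pointwise lower bound on $\Phi_{[-m,n)}$ from the hypotheses with which to run Fatou in the other direction and pin down the constant $\ell^*$. So the argument stalls exactly at the step you flagged as the ``main obstacle''.

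The paper's resolution is a concrete ``good times'' device. By Egorov, the convergence $j^{-1}\Psi_j \to L$ is uniform on a set $A$ with $\Pbb(A)$ arbitrarily close to $1$: there is $N$ with $j^{-1}\Psi_j(\omega')\in[L-\epsilon,L+\epsilon]$ for all $j\geq N$ and all $\omega'\in A$. For generic $\omega$, Birkhoff applied to $1_A$ shows that the backward orbit $\{T^{-k}\omega\}$ visits $A$ with density close to $1$; hence for every large $m$ one can choose $k\in[m+\epsilon m, m+2\epsilon m]$ with $T^{-k}\omega\in A$. Subadditivity then gives
\[
\Phi_{[-m,n)}(\omega)=\Phi_{[k-m,\,k+n)}(T^{-k}\omega)\geq \Psi_{k+n}(T^{-k}\omega)-\Psi_{k-m}(T^{-k}\omega),
\]
and since both $k+n$ and $k-m\geq\epsilon m$ exceed $N$, the right-hand side is at least $(k+n)(L-\epsilon)-(k-m)(L+\epsilon)=(m+n)(L+O(\epsilon))$. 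The missing idea is thus: do not try to control the shifted cocycle at an uncontrolled point $T^{-m-K}\omega$; instead perturb $m$ to a nearby $k$ landing in the Egorov set, where uniform Kingman estimates are available, and absorb the perturbation cost $k-m = O(\epsilon m)$ via those same uniform estimates.
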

\begin{proof}
Let $\Psi_n(\omega) = \Phi_{[0,n)}(\omega)$. The assumptions give,
for any $m,n\geq 0$,
  \begin{align*}
  \Psi_{m+n}(\omega) & = \Phi_{[0,m+n)}(\omega)
  \leq \Phi_{[0,n)}(\omega) + \Phi_{[n, n+m)}(\omega)
  = \Phi_{[0,n)}(\omega) + \Phi_{[0, m)}(T^n \omega)
  \\& = \Psi_n(\omega) + \Psi_m(T^n \omega).
  \end{align*}
This shows that $\Psi_n$ is a subadditive cocycle in the usual sense
of Kingman's ergodic theorem (see for instance \cite[Theorem
I.5.3]{krengel_ergodic_theorems}). Therefore, $n^{-1}\Psi_n$
converges almost surely and in $L^1$ to the limit $c=\inf n^{-1} \int
\Phi_{[0, n)}(\omega) \dd \Pbb(\omega) = \lim n^{-1} \int \Phi_{[0,
n)}(\omega) \dd \Pbb(\omega)$.

In the same way, $\Phi_{[-n,-1]}(\omega)$ is a subadditive cocycle
for the transformation $T^{-1}$. Hence, $n^{-1} \Phi_{[-n,
-1]}(\omega)$ converges almost surely to $\lim n^{-1} \int \Phi_{[-n,
-1]}(\omega) \dd \Pbb(\omega)$. Changing variables by
$\omega'=T^{-n}\omega$, this integral is equal to $\int \Phi_{[0,
n)}(\omega') \dd\Pbb(\omega')$. Therefore, the limit is again $c$.

Consider now a generic point $\omega$, and $m, n\geq 0$. We want to
show that if $m+n$ is large then $(m+n)^{-1}\Phi_{[-m, n)}(\omega)$
is close to $c$. We will do so if $m$ is large, the case $n$ large is
handled similarly. Let $\epsilon>0$ be small. Since
$j^{-1}\Phi_{[0,j)}(\omega')$ converges almost everywhere to $c$, it
converges uniformly on a set $A$ of measure arbitrarily close to $1$.
In particular, there exists $N$ such that, for all $j\geq N$ and for
all $\omega'\in A$, one has $j^{-1} \Phi_{[0,j)}(\omega') \in
[c-\epsilon, c+\epsilon]$. Since $\omega$ is generic and $\Pbb(A)$ is
very close to $1$, the orbit of $\omega$ spends a very large
proportion of its time in $A$. In particular, one may find for every
large enough $m$ an integer $k \in [m+\epsilon m, m+2\epsilon m]$
such that $T^{-k} \omega \in A$. We get for this $k$ (and for any
$n\geq 0$)
  \begin{align*}
  \Phi_{[0, k+n)}(T^{-k} \omega)
  \leq \Phi_{[0, k-m)}(T^{-k}\omega) + \Phi_{[k-m, k+n)}(T^{-k} \omega)
  = \Phi_{[0, k-m)}(T^{-k}\omega) + \Phi_{[-m, n)}(\omega).
  \end{align*}
If $m$ is large enough, then $k-m \geq \epsilon m$ is larger than
$N$. Since $T^{-k}\omega$ belongs to $A$, we obtain
  \begin{equation*}
  \Phi_{[-m, n)}(\omega) \geq (k+n)(c-\epsilon) - (k-m) (c+\epsilon)
  =(m+n)(c+O(\epsilon)).
  \end{equation*}
This shows that $\liminf (m+n)^{-1} \Phi_{[-m, n)}(\omega) \geq c$.
On the other hand,
  \begin{equation*}
  \Phi_{[-m, n)}(\omega) \leq \Phi_{[-m, -1]}(\omega) + \Phi_{[0,n)}(\omega)
  = m(c+o(1)) + n(c+o(1)).
  \end{equation*}
Therefore, $\limsup (m+n)^{-1} \Phi_{[-m, n)}(\omega) \leq c$. This
concludes the proof.
\end{proof}

Let us now start the proof of Proposition~\ref{prop_dim2}. We can
assume without loss of generality that $y=e$. Let $\ell>0$ be large
(it will not depend on $n$), we will construct $\ell$ suitable
barriers $A_1, \dotsc, A_\ell$ between $x$ and $z$ such that
  \begin{equation}
  \label{eq_dim2_barriers1}
  \sum_{\mathclap{a\in A_i, b\in A_{i+1}}} G_R(a,b)^2 \leq e^{-\rho n}
  \end{equation}
and
  \begin{equation}
  \label{eq_dim2_barriers2}
  \sum_{a\in A_1} G_R(e,a)^2 \leq 1,\quad \sum_{a\in A_\ell} G_R(a,e)^2 \leq 1
  \end{equation}
for some $\rho>0$ that does not depend on $\ell$, $x$ or $z$. From
the last equation, we obtain $\sum_{a\in A_1} G_R(x,a)^2 \leq C^n$
thanks to Harnack inequalities (and since $d(x,e) \leq 100 n$), and
$\sum_{a\in A_\ell} G_R(a, z)^2 \leq C^n$. Arguing as in the
beginning of the proof of Lemma~\ref{lem_avoidballs}, we define
operators $L_i$ from $\ell^2(A_{i+1})$ to $\ell^2(A_i)$. They satisfy
$\norm{L_0}\leq C^{n/2}$, $\norm{L_{\ell+1}} \leq C^{n/2}$ and
$\norm{L_i} \leq e^{-\rho n/2}$ for $1\leq i\leq \ell$. Therefore,
  \begin{equation*}
  G_R(x,z; B(e,n)^c) \leq \prod \norm{L_i} \leq C^n e^{-(\ell-1)\rho n/2}.
  \end{equation*}
Taking $\ell$ large, we can ensure that this is bounded by $K^{-n}$
as desired, for any $K>0$.

The key point of the argument is the construction of the barriers.
The problem with the argument in Lemma~\ref{lem_avoidballs} is that
we only have a control on $G_R(a,b)G_R(b,a)$ coming from
Lemma~\ref{lem_GRbounded}, not $G_R(a,b)^2$. The idea is that those
controls would be equivalent if $G_R(a,b)$ and $G_R(b,a)$ were of the
same order of magnitude. For symmetric measures, this is always the
case. For non-symmetric measures, we will be able to enforce it by
constructing the barriers using another, \emph{symmetric}, random
walk, and use Kingman subadditive ergodic theorem to show that for
typical points both $G_R(a,b)$ and $G_R(b,a)$ grow at the same speed.
It will then follow from Lemma~\ref{lem_GRbounded} that they are both
exponentially small.

Let us stress that this kind of argument can not work for \emph{all}
points. For instance, consider in the free group on two generators
$a$ and $b$ a random walk that goes towards $a$ with probability
$1-3\epsilon$, and towards $a^{-1}$, $b$ and $b^{-1}$ with
probability $\epsilon$, for some small enough $\epsilon$. It is easy
to check that $G_R(e,a^n)$ is exponentially large (while $G_R(a^n,
e)$ is exponentially small). In particular, $\sum_{x\in \Sbb_n}
G_R(e,x)^2$ grows exponentially fast, but this growth is due to a
rather small number of points. The barriers we construct have to
avoid those points.

We turn to details. The barriers we will construct will not depend on
the points $x$ and $z$ (but the order in which they will be
encountered will depend on those points, of course). Since $\Gamma$
is a cocompact discrete subgroup of $\SL$, it acts on the hyperbolic
disk $\HH^2$. If $O$ is a suitably chosen reference point in this
disk, the points $\gamma O$ (for $\gamma \in \Gamma$) are pairwise
disjoint, hence $\Gamma$ can be identified with $\Gamma O$. Moreover,
this identification is a quasi-isometry between $\Gamma$ (with the
word distance coming from its Cayley graph) and $\HH^2$. In
particular, the geometric boundary of $\Gamma$ is identified with
$S^1 = \partial \HH^2$. We can assume that $O$ is the center of the
hyperbolic disk.

Let us fix an admissible \emph{symmetric} measure $\nu$ on $\Gamma$,
supported on the set of generators, and let us consider the
corresponding random walk. We claim that the following lemma holds.
Here and henceforth, $G_R$ always denotes the Green function
associated to the original measure $\mu$.
\begin{lem}
\label{lem_dim2_barriersprop}
There exist $\rho>0$ and $v>0$ with the following properties.
\begin{enumerate}
\item For almost every trajectory $X_k$ of the random walk given
    by $\nu$, $d(X_k, e) \sim kv$.
\item For almost every trajectory $X_k$, for all large enough
    $k$, $G_R(e,X_k) \leq e^{-\rho k}$ and $G_R(X_k, e) \leq
    e^{-\rho k}$.
\item For almost every pair of independent trajectories $X_k$ and
    $Y_k$, for all large enough $k$ and $k'$, $G_R(X_k, Y_{k'})
    \leq e^{-\rho (k+k')}$.
\end{enumerate}
\end{lem}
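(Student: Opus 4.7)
\medskip
\noindent\textbf{Proof proposal.}
For (1), the plan is to apply Kingman's subadditive ergodic theorem to the cocycle $k\mapsto d(e,X_k)$: subadditivity follows from the triangle inequality combined with translation invariance of the $\nu$-walk, and integrability is immediate since $\nu$ is finitely supported. Positivity of the drift $v$ is a consequence of non-amenability: for symmetric $\nu$ the Cauchy-Schwarz estimate gives $\nu_k(x)\leq \nu_k(e)\leq \rho(\nu)^k$, so combined with the exponential growth of $\Gamma$ one has $\Pbb(|X_k|\leq \epsilon k)\leq |B(e,\epsilon k)|\rho(\nu)^k$, which is exponentially small once $\epsilon>0$ is small enough.

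For (2), I would consider the cocycle $\Phi_{[a,b)}(\omega) = -\log G_R(X_a, X_b) + \log G_R(e, e)$. It is subadditive by~\eqref{eq_trivial_ancona}, shift-covariant by translation invariance, and bounded below in $L^1$ via Harnack (since $G_R(e, X_k)\leq C^{|X_k|}G_R(e,e)\leq C^k G_R(e,e)$). Unilateral Kingman then yields $-\log G_R(e, X_k)/k\to c$ a.s., and the mirror cocycle $\Phi^{\mathrm{op}}_{[a,b)} = -\log G_R(X_b, X_a) + \log G_R(e,e)$ similarly gives $-\log G_R(X_k, e)/k \to c'$ a.s. The symmetry of $\nu$, together with $G_R(X_k, e) = G_R(e, X_k^{-1})$ and the fact that $X_k^{-1}$ is equidistributed with $X_k$, forces $\mathbb{E}[\Phi^{\mathrm{op}}_{[0,k)}] = \mathbb{E}[\Phi_{[0,k)}]$ for every $k$, hence $c=c'$. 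The crux is to show $c>0$: by Lemma~\ref{lem_GRbounded} the set $A_{j,\epsilon} = \{x\in \Sbb_j : H_R(e,x) > e^{-\epsilon j}\}$ has cardinality $\leq C e^{\epsilon j}$; combined with $\nu_k(x)\leq \rho(\nu)^k$ and summed over $j\leq k$, this gives $\Pbb(H_R(e,X_k) > e^{-\epsilon |X_k|})\leq Ck (e^\epsilon \rho(\nu))^k$. For $\epsilon < -\log \rho(\nu)$ this is summable, so Borel-Cantelli together with (1) yields $-\log H_R(e, X_k)/k \geq \epsilon v/2$ eventually a.s., forcing $2c = c + c'>0$.

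For (3), the idea is to realise the pair $(X_k, Y_{k'})$ as two positions of a single bilateral $\nu$-walk, so that the bilateral Kingman theorem applies directly. Let $(\hat X_n)_{n\in\Z}$ be the walk on $\hat\Omega = \Gamma^\Z$ with iid $\nu$-increments $\hat\xi_n = \hat X_{n-1}^{-1}\hat X_n$, so $\hat X_n = \hat\xi_1\cdots\hat\xi_n$ for $n>0$ and $\hat X_n = \hat\xi_0^{-1}\hat\xi_{-1}^{-1}\cdots\hat\xi_{n+1}^{-1}$ for $n<0$. Setting $\xi_k := \hat\xi_{-k+1}^{-1}$ and $\eta_{k'} := \hat\xi_{k'}$ for $k,k'\geq 1$ defines, thanks to the symmetry of $\nu$, a measure-preserving isomorphism of $(\hat\Omega, \nu^{\otimes \Z})$ with the product space of two independent unilateral $\nu$-walks; one then checks $\hat X_{-k} = X_k$ and $\hat X_{k'} = Y_{k'}$. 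Applying Theorem~\ref{thm_kingman} to the bilateral cocycle $\Phi_{[m,n)}(\hat\omega) = -\log G_R(\hat X_m, \hat X_n) + \log G_R(e,e)$ (whose hypotheses are verified exactly as in (2), with Kingman limit equal to the same $c>0$) at $(m,n) = (-k, k')$ gives $-\log G_R(X_k, Y_{k'})/(k+k') \to c$ a.s., hence $G_R(X_k, Y_{k'}) \leq e^{-\rho(k+k')}$ eventually a.s.\ for every $\rho<c$. The main obstacle is the $c>0$ step in (2); once that is established, the bilateral trick in (3) is essentially automatic.
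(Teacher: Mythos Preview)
Your proposal is correct and follows the same architecture as the paper's proof: the same cocycle $-\log F_R(X_m,X_n)$ (your $-\log G_R+\log G_R(e,e)$ is literally this), the same symmetry argument to identify the forward and backward Kingman limits, and the same bilateral trick for item~(3).

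The one genuine difference is how you establish strict positivity of the Kingman limit $c$. The paper uses the Shannon--McMillan--Breiman picture: for the symmetric auxiliary walk $\nu$ with entropy $h>0$, the walk at time $n$ is essentially carried by $\sim e^{hn}$ points each of mass $\sim e^{-hn}$; feeding this into the bound $\sum_{x\in B(e,n)}H_R(e,x)\leq Cn$ from Lemma~\ref{lem_GRbounded} gives the quantitative estimate $c\geq h/2$, and one concludes via $h>0$ (non-amenability). Your route instead exploits the spectral gap directly: from $\nu_k(x)\leq\rho(\nu)^k$ and the Chebyshev bound $\lvert\{x\in\Sbb_j: H_R(e,x)>e^{-\epsilon j}\}\rvert\leq Ce^{\epsilon j}$ (again Lemma~\ref{lem_GRbounded}), a Borel--Cantelli argument forces $-\log H_R(e,X_k)/k$ to be eventually bounded below, hence $2c>0$. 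Your argument is more self-contained (it avoids invoking entropy theory and the Guivarc'h inequality $h\leq v\zeta$ for item~(1) as well), while the paper's version yields the sharper explicit lower bound $c\geq h/2$.
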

Let us admit the lemma for the moment. We choose $2\ell+2$ points in
$S^1$ that are evenly spaced, and $2\ell+2$ small intervals $I_i$
around those points. The Poisson boundary of the random walk given by
$\nu$ is $S^1$, and the hitting measure has full support. Therefore,
there is positive probability to hit the boundary in any of the
intervals $I_i$. Let us choose for each $i$ a trajectory $X_k^{(i)}$
of the random walk that ends up in $I_i$. We will also require each
of those trajectories to be typical, so that they satisfy the
conclusions of Lemma~\ref{lem_dim2_barriersprop}.

Since the trajectories $X_k^{(i)}$ converge to different points on
the boundaries, they are disjoint outside of a large enough compact
set. Let $\Gamma_i(n)$ be the set of points $X_k^{(i)}$ that are at
distance at least $n$ of $e$, and let $B_i(n)$ be a thickening of
$\Gamma_i(n)$, i.e., $B_i(n)=\bigcup_{a\in \Gamma_i(n)} B(a, C_0)$
for some large constant $C_0$. If $n$ is large enough, the sets
$(B_i(n))_{i\leq 2\ell+2}$ are mutually disjoint. Since $d(X_k^{(i)},
e) \sim vk$, the set $\Gamma_i(n)$ only contains points among the
$X_k^{(i)}$ with $k\geq n/(2v)$. Therefore,
  \begin{equation*}
  \sum_{a\in \Gamma_i(n)} G_R(e,a)^2 \leq \sum_{k=n/(2v)}^\infty G_R(e, X_k^{(i)})^2
  \leq \sum_{k=v/(2v)}^\infty e^{-2\rho k} \leq C e^{-\rho n/v}.
  \end{equation*}
Since any point in the thickening $B_i(n)$ is a bounded distance away
from a point in $\Gamma_i(n)$, a similar estimate holds with $B_i(n)$
instead of $\Gamma_i(n)$ thanks to Harnack inequalities. Arguing in
the same way for the other inequalities, we obtain
  \begin{equation}
  \label{eq_estimate_barriers_dim2}
  \begin{split}
  &\sum_{a\in B_i(n)} G_R(e,a)^2 \leq C e^{-\rho n/v},
  \quad
  \sum_{a\in B_i(n)} G_R(a,e)^2 \leq C e^{-\rho n/v},
  \\
  &\sum_{a\in B_i(n), b\in B_j(n)}
  G_R(a,b)^2 \leq C e^{-2\rho n/v}.
  \end{split}
  \end{equation}

Consider now two points $x$ and $z$ at distance at least $n$ of $e$
such that $e$ is on a geodesic segment from $x$ to $z$. The Cayley
geodesic from $x$ to $z$ is a quasi-geodesic in hyperbolic space,
that remains in a bounded size neighborhood of a true hyperbolic
geodesic (and this geodesic passes close to $O$). Avoiding a ball
around $O$ in $\HH^2$, one can go from $x$ to $z$ in two directions
around this ball, clockwise or counterclockwise. Since the limit
intervals $I_i$ are evenly spaced, it follows that, in any of those
directions, one meets successively at least $\ell$ sets $B_i(n)$
(discarding if necessary the two sets that contain $x$ and $z$).
Denote by $A_1$ the union of the two sets $B_i(n)$ that are closest
to $x$ (ignoring the single set that might contain $x$), then $A_2$
the union of the next two ones, and so on. We get barriers
$A_1,\dotsc,A_\ell$ between $x$ and $z$ as desired. Moreover, the
estimates~\eqref{eq_estimate_barriers_dim2} show that those barriers
satisfy~\eqref{eq_dim2_barriers1} and ~\eqref{eq_dim2_barriers2} if
$n$ is large enough (for a different value of $\rho$). This concludes
the proof of Proposition~\ref{prop_dim2}, modulo
Lemma~\ref{lem_dim2_barriersprop}.\qed

\begin{proof}[Proof of Lemma~\ref{lem_dim2_barriersprop}]
We will use Kingman's theorem on the space $\Omega = \Gamma^{\Z}$
with the product measure $\nu^{\otimes \Z}$. In other words, an
element $\omega \in \Omega$ is a sequence of elements $\omega_i$ of
$\Gamma$ that are drawn independently according to $\nu$. Such an
$\omega$ can be viewed as the increments of a random walk distributed
according to $\nu$: let $X_n(\omega) = \omega_0 \dotsm \omega_{n-1}$
for $n\geq 0$ and $X_n(\omega) = \omega_{-1}^{-1} \dotsm
\omega_{n}^{-1}$ for $n<0$, so that $X_0=e$ and $X_{n+1}=X_n
\omega_{n}$. Let $T$ be the left shift on $\Omega$, it is ergodic,
preserves the measure, and $X_n(T\omega) = \omega_0^{-1}
X_{n+1}(\omega)$.

We define a subadditive cocycle $\Phi_{[m,n)}(\omega) = -\log F_R(
X_m(\omega), X_n(\omega))$, where $F_R(x,y)=G_R(x,y)/G_R(e,e)$ is the
first entrance Green function defined in
Subsection~\ref{subsec_green}. This function satisfies
$F_R(x,y)F_R(y,z) \leq F_R(x,z)$ by~\eqref{eq_Fr_subadd}, hence
$\Phi_{[m,n)}$ is subadditive. By Harnack inequalities,
$\abs{\Phi_{[m,n)}(\omega)} \leq C\abs{n-m}$. Therefore, the
integrability assumptions of Theorem~\ref{thm_kingman} are satisfied.
We deduce that $(m+n)^{-1}\log F_R(X_{-m}, X_n)$ converges almost
surely to $c = \lim n^{-1} \int \log F_R(e, X_n(\omega))
\dd\Pbb(\omega)$. We can also apply Kingman's theorem to the cocycle
$\tilde \Phi_{[m,n)}(\omega) = -\log F_R( X_n(\omega), X_m(\omega))$,
to get that $(m+n)^{-1}\log F_R(X_{n}, X_{-m})$ almost surely
converges, to $c'=\lim n^{-1} \int \log F_R(X_n(\omega), e)
\dd\Pbb(\omega)$. We have $F_R(X_n, e) = F_R(e, X_n^{-1})$. Since
$X_n^{-1}$ is distributed like $X_n$ by symmetry of the random walk,
this gives $\int \log F_R(X_n(\omega), e) \dd\Pbb(\omega) = \int \log
F_R(e, X_n(\omega)) \dd\Pbb(\omega)$. Dividing by $n$ and letting $n$
tends to infinity gives $c=c'$.

The quantities $(m+n)^{-1}\log F_R(X_{-m},X_n)$ and $(m+n)^{-1} \log
F_R(X_n, X_{-m})$ both converge to $c$. Taking $m=0$, we get in
particular that $n^{-1}\log F_R(e,X_n)$ and $n^{-1}\log F_R(X_n, e)$
almost surely converge to $c$.

We will now prove that $c$ is strictly negative. There exists a real
number $h\geq 0$ (the entropy of the random walk) such that the
random walk at time $n$ is essentially supported by $e^{h n}$ points,
with a probability $e^{-hn}$ to reach each of those points. More
precisely (see for instance \cite[Theorem 2.28]{furman}), for any
$\epsilon>0$, if $n$ is large enough, there exists a subset $E_n$ of
$\Omega$, with probability at least $3/4$, such that for any
$\omega\in E_n$ one has
  \begin{equation*}
  \Pbb\{\omega': X_n(\omega') = X_n(\omega)\} \in [e^{(-h-\epsilon)n}, e^{(-h+\epsilon)n}].
  \end{equation*}
Since $\log F_R(e,X_n)$ and $\log F_R(X_n, e)$ almost surely converge
to $c$, we can also assume (shrinking $E_n$ a little bit) that for
any $\omega \in E_n$ one has $F_R(e,X_n) \geq e^{(c-\epsilon)n}$ and
$F_R(X_n, e) \geq e^{(c-\epsilon)n}$. Let $\tilde E_n \subset \Gamma$
be the set of points $X_n(\omega)$ for $\omega\in E_n$. It has
cardinality at least $C e^{(h-\epsilon)n}$, it is contained in $B(e,
n)$ (since the steps of the random walk have length at most $1$ by
definition), and for any $x\in \tilde E_n$ one has $F_R(e,x)F_R(x,e)
\geq e^{2(c-\epsilon)n}$. Therefore,
  \begin{equation*}
  \sum_{x\in B(e,n)} F_R(e,x)F_R(x,e) \geq \sum_{x\in \tilde E_n} F_R(e,x)F_R(x,e)
  \geq \Card \tilde E_n \cdot e^{2(c-\epsilon)n}
  \geq C e^{(h-\epsilon)n} e^{2(c-\epsilon)n}.
  \end{equation*}
By Lemma~\ref{lem_GRbounded}, the sum $\sum_{x\in \Sbb_k}
F_R(e,x)F_R(x,e)$ is uniformly bounded (since
$F_R(x,y)=G_R(x,y)/G_R(e,e)$). Therefore, $\sum_{x\in B(e,n)}
F_R(e,x)F_R(x,e)  \leq C n$. We deduce that $(h-\epsilon) +
2(c-\epsilon)$ is nonpositive. Finally, letting $\epsilon$ tend to
$0$, we get $c\leq - h/2$. Since entropy is nonzero in non-amenable
groups (see for instance \cite[Proposition 2.35]{furman}), we get
$c<0$ as desired.

Let us now prove the estimates of the lemma. The first item (positive
escape rate) is classical and follows from Kingman's theorem for the
existence of the escape rate, and from the inequality $h\leq v \zeta$
for its positivity (where $\zeta$ is the exponential growth rate of
the cardinality of balls), see \cite[Proposition~2.32]{furman}. For
the second item, consider a typical trajectory $X_k$ of the random
walk. Since $\log F_R(e,X_k) \sim ck$ with $c<0$, we deduce that for
large enough $k$ one has $F_R(e,X_k) \leq e^{ck/2}$. Since $G_R(x,y)
= F_R(x,y) G_R(e,e)$, the exponential decay of $G_R(e,X_k)$ follows.
The decay of $G_R(X_k, e)$ is handled in the same way. Finally,
consider two independent trajectories $X_k$ and $Y_k$ of the random
walk. Define (for $k>0$) $X_{-k}=Y_k$. By symmetry of $\nu$,
$(X_k)_{k\in \Z}$ is a typical trajectory for the bilateral random
walk. Applying Theorem~\ref{thm_kingman}, we deduce that $\log
F_R(X_{-m}, X_n) \sim (m+n)c$, i.e., $\log F_R(Y_m, X_n) \sim (m+n)c$
when $m+n$ tends to infinity. This is the desired exponential decay.
\end{proof}

\bibliography{biblio}
\bibliographystyle{amsalpha}
\end{document}